\documentclass[11pt]{article}
\usepackage[utf8]{inputenc}
\usepackage{amsmath,amssymb,amsthm,amsfonts}
\usepackage{hyperref}
\usepackage{graphicx}
\usepackage{enumitem}
\usepackage{xspace}
\usepackage{xcolor}
\usepackage{textcomp}
\usepackage[title]{appendix}
\usepackage{booktabs}
\newtheorem{theorem}{Theorem}
\newtheorem{corollary}{Corollary}
\newtheorem{lemma}{Lemma}
\newtheorem{proposition}{Proposition}
\theoremstyle{definition}
\newtheorem{example}{Example}
\newtheorem{remark}{Remark}
\newtheorem{definition}{Definition}
\newtheorem{assumption}{Assumption}

\newcommand{\VU}{\(\mathcal{VU}\)\xspace} 

\newcommand{\func}[1]{\mathrm {#1}\,}
\DeclareUnicodeCharacter{00A0}{ }
\DeclareMathOperator*{\argmin}{arg\,min}

\DeclareMathOperator{\dom}{dom}

\DeclareMathOperator{\ri}{ri}
\DeclareMathOperator{\inte}{int}
\DeclareMathOperator{\Ker}{Ker}
\DeclareMathOperator{\ext}{ext}
\DeclareMathOperator{\aff}{aff}
\DeclareMathOperator{\co}{co}
\DeclareMathOperator{\spn}{span}

\DeclareMathOperator{\pos}{pos}
\mathchardef\hyphen="2D 

\newcommand{\overbar}[1]{\mkern 1.5mu\overline{#1\mkern-1.5mu}\mkern 1.5mu}

\newcommand{\mcb}[1]{\left\{#1\right\}}
\newcommand{\fa}{\forall \,}

\newcommand{\mang}[1]{\left\langle  #1\right\rangle}
\newcommand{\mbk}[1]{\left(#1\right)}

\newcommand{\urightarrow}[2][]{%
  \mathrel{%
    \xrightarrow[{\raisebox{0.5pt}[0pt][0pt]{$\scriptstyle#1$}}]{#2}%
  }%
}

\newcommand\RR{\mathbb{R}} 
\newcommand{\U}{\(\mathcal{U}\)\xspace}

\newcommand\infconv{\mathbin{
  \setbox0=\hbox{$\scriptstyle\vee$}\raise-0.55\ht0\hbox{$\scriptstyle\vee$}
  \setbox0=\hbox{$\scriptstyle\vee$}\hskip-0.5\wd0
  \setbox0=\hbox{$\scriptscriptstyle +$}\hskip-0.5\wd0
  \setbox0=\hbox{$\scriptstyle\vee$}\raise0.55\ht0\hbox{$\scriptscriptstyle +$}}}
\newcommand{\inv}{^{\raisebox{.2ex}{$\scriptscriptstyle-1$}}}

\providecommand{\norm}[1]{\lVert#1\rVert_{\infty}}
\providecommand{\normt}[1]{\lVert#1\rVert}
\providecommand{\normo}[1]{\lVert#1\rVert_{1}}
\providecommand{\abs}[1]{\lvert#1\rvert}

\newcommand\restr[2]{{
  \left.\kern-\nulldelimiterspace 
  #1 
  \vphantom{\big|} 
  \right|_{#2} 
  }}

\newcommand{\lowoverline}[1]{%
  \overline{\smash{#1}\vphantom{i  }}\vphantom{#1}%
}

\makeatletter
\AtBeginDocument{%
  \raggedbottom
  \def\@textbottom{\vskip \z@ \@plus 1pt}%
  \let\@texttop\relax
  \vbadness=10001
}
\makeatother
\begin{document}

\title{$\varepsilon$-$\mathcal{VU}$ spaces and approximate leaves\\ for sublinear composite functions}
\author{Shuai Liu\\
School of Mathematics and Statistics,\\
Nanfang College, Guangzhou\\
No.\ 882, Wenquan Avenue,\\
Conghua District, Guangzhou 510970, China\\
\texttt{shuai0liu@gmail.com}}
\date{}
\maketitle

\begin{abstract}
Let \(h\) be proper, closed and sublinear, and let \(F\) be of class \(C^1\). The exact \(\mathcal{VU}\) calculus of the composite \(f=h\circ F\) is taken as known. This paper studies enlargements \(\bar\partial_\varepsilon h\subset\partial_\varepsilon h\) and the resulting \(\varepsilon\)-spaces of \(f\). Nested inclusions give one-sided Painlev\'e--Kuratowski limits of \(\mathcal{V}_\varepsilon\) and \(\mathcal{U}_\varepsilon\) at points of the lineality space \(\mathcal{U}h(0)\). An additional transversality transfers those limits through \(F\); without it only the exact chain rule is available. Approximate leaves \(\mathcal{M}_\varepsilon(x)=F^{-1}\bigl(F(x)+\mathcal{U}_\varepsilon h(F(x))\bigr)\) are inner semicontinuous under the nesting \(\mathcal{U}h(F(\bar x))\subset\mathcal{U}_\varepsilon h(F(x))\). For a polyhedral outer function the activity set freezes along the exact leaf for small \(\varepsilon\), so the dimension identity holds on that leaf and not necessarily in a full neighbourhood of \((\bar x,0)\). Partial smoothness of \(h\) relative to \(\mathcal{U}h(0)\) extends a known result from the origin to the lineality space; it is not used by the transfer theorems.
\end{abstract}

\noindent\textbf{Keywords.} $\mathcal{VU}$-decomposition, partial smoothness, sublinear function, polyhedral function, composite function, subdifferential enlargement.

\noindent\textbf{MSC.} 49J52, 49J53, 58C06, 49K40.

\section{Introduction}
A composite \(f=h\circ F\) with \(h\) proper, closed and sublinear and \(F\) of class \(C^1\) is nondifferentiable, but the nondifferentiability is confined to a subspace. For such an \(h\) the subdifferential \(\partial h(x)\) is a face of \(\partial h(0)\), so the subspaces are nested: \(\mathcal{V}h(x)\subset\mathcal{V}h(0)\) and \(\mathcal{U}h(0)\subset\mathcal{U}h(x)\) whenever \(\partial h(x)\neq\emptyset\). The lineality space is \(\mathcal{U}h(0)\).

The \(\mathcal{VU}\)-decomposition is that of~\cite{Lemarechal2000}; partial smoothness relative to an active manifold is that of~\cite{Lewis2002active}; the identification of the normal space with \(\mathcal{V}\) is normal sharpness. For a sublinear function,~\cite{hare2006functions} shows partial smoothness at the origin relative to \(\mathcal{U}h(0)\). The nondegeneracy condition for a composite \(g\circ F\) is the same transversality. The exact \(\mathcal{VU}\) identities for \(f=h\circ F\), including the set \(\mathcal{R}_{h,F}\), are those of~\cite{Liu2026vu} and are not reproved. The extreme-point enlargement and the polyhedral continuity statements are those of~\cite{Liu2019Subdifferential}. The bivariate \(\mathcal{U}\)-Lagrangian and the proximal-gradient \(\mathcal{VU}\)-method are~\cite{Liu2025}; the prox-regular extension of the \(\mathcal{VU}\)-decomposition is~\cite{Liu2020}.

What is added is the passage from an enlargement \(\bar\partial_\varepsilon h\subset\partial_\varepsilon h\) to the composite \(\varepsilon\)-spaces and to approximate leaves, under the restriction that the outer function be sublinear. The restriction is the source of the nesting: for \(h\in\mathcal{H}\) the subdifferential at \(x\) is a face of \(\partial h(0)\), so the subspaces are nested and the one-sided limits exist at every point of the lineality space. The composites that fall in this class include the group lasso, an unbiased rectifier layer, and the nuclear norm; a biased layer does not, since translation destroys positive homogeneity. The \(\varepsilon\)-step beyond~\cite{Liu2019Subdifferential} is the passage of those limits through \(F\), and the approximate leaf, neither of which is in that account. Precisely:
\begin{itemize}
\item
nested inclusions give one-sided Painlev\'e--Kuratowski limits of \(\mathcal{V}_\varepsilon h\) and \(\mathcal{U}_\varepsilon h\) at points of the lineality space \(\mathcal{U}h(0)\); they are not Hausdorff continuity of the Grassmannian of \(\mathcal{U}_\varepsilon\) off the leaf;
\item
\textbf{an additional transversality, beyond membership in \(\mathcal{R}_{h,F}\), is shared by Theorems~\ref{thm:eps-transfer} and~\ref{thm:Meps}; it transfers the limits through \(F\) (Theorem~\ref{thm:eps-transfer}) and is the surjectivity hypothesis of the implicit-function step in Theorem~\ref{thm:Meps}. It is the nonseparability hypothesis of \cite[Theorem~4.32(a)]{Rockafellar1998} and the horizon qualification of \cite[Theorem~5.53(b)]{Rockafellar1998}, not a new condition. Without it only the exact chain rule is available;}
\item
\textbf{approximate leaves \(\mathcal{M}_\varepsilon(x)=F^{-1}\bigl(F(x)+\mathcal{U}_\varepsilon h(F(x))\bigr)\) are inner semicontinuous under the nesting \(\mathcal{U}h(F(\bar x))\subset\mathcal{U}_\varepsilon h(F(x))\), and converge in the Painlev\'e--Kuratowski sense once the dimension is frozen (Theorem~\ref{thm:Meps});}
\item
for a polyhedral outer function the activity set freezes along the exact leaf for small \(\varepsilon\) (Proposition~\ref{prop:Meps-poly}), so the dimension identity holds on that leaf and not necessarily in a full neighbourhood of \((\bar x,0)\);
\item
partial smoothness of \(h\in\mathcal{H}\) relative to \(\mathcal{U}h(0)\) extends~\cite{hare2006functions} from the origin to the lineality space. No algorithm is proposed.
\end{itemize}

Section~2 fixes the notation, the class \(\mathcal{H}\), and the enlargement. Section~3 records the properties of \(\mathcal{H}\), including the polyhedral representation. Section~4 gives the \(\mathcal{VU}\)-structure on that class, the transfer theorem, the approximate-leaf theorem, and the polyhedral enlargement. Section~5 records partial smoothness relative to the lineality space.

\section{Preliminaries}
Standard convex-analysis notation follows Rockafellar--Wets: \(\operatorname{co}\), \(\operatorname{aff}\), \(\operatorname{ri}\), \(\operatorname{int}\), \(\operatorname{cl}\), \(\operatorname{pos}\), \(\sigma_S\), \(\delta_S\), \(S^\circ\), \(S^\infty\), \(N_S\), \(T_S\), \(\partial\), \(\partial^\infty\), \(f'(x;d)\), \(\operatorname{Ker}\), \(\operatorname{Im}\), and the closed ball \(B(x,r)\).
We write \(\Gamma_0(\mathbb{R}^n)\) for the proper convex lsc functions, and \(\mathcal{H}\) for the proper closed sublinear ones (Definition~\ref{def:sublinear}).
The face of a set \(S\) exposed by \(d\) is \(F_S(d)\); \(\operatorname{ext} S\) denotes the extreme points and extreme directions.
The \(\mathcal{VU}\) subspaces at \(x\in\operatorname{dom}\partial f\) are \(\mathcal{V}f(x)\) and \(\mathcal{U}f(x)\) (Definition~\ref{def:vu}); \(\mathcal{R}_{h,F}\) is the exact chain-rule set of \cite[Definition~2]{Liu2026vu}.
The closed unit balls of \(\|\cdot\|_1\) and \(\|\cdot\|_\infty\) are \(\mathbb{B}_1\) and \(\mathbb{B}_\infty\); \(\Delta_n\) is the canonical simplex.
\(\mathcal{S}_n\) is the space of real symmetric \(n\times n\) matrices, and \(\mathcal{C}_n=\{V\succeq 0:\operatorname{tr} V=1\}\) is the spectraplex.
\begin{definition}[sublinear]\label{def:sublinear}
A function \(h:\mathbb{R}^n\to\overbar{\mathbb{R}}\) is \emph{sublinear} if it is subadditive and positively homogeneous of degree \(1\):
\[
h(x+y)\le h(x)+h(y),\qquad h(tx)=th(x)\quad\text{for all }t>0.
\]
For a proper function these two axioms are equivalent to convexity together with positive homogeneity of degree \(1\) \cite[Theorem~4.7]{Rockafellar1970}. The class \(\mathcal{H}\) is the proper closed members of this class.
\end{definition}

The \(\mathcal{VU}\) decomposition was initially defined in \cite{Lemarechal2000} for a finite-valued convex function and later was generalized to the setting of any proper, lsc functions in \cite[Definition 5]{Liu2020}. 
\begin{definition}[\(\mathcal{VU}\) decomposition ]\label{def:vu}
Given a proper, lsc function \(f\)  and a point \(\bar{x}\) with \(\partial f\mbk{\bar{x}}\not=\emptyset\), the \(\mathcal{VU}\) decomposition associated with \(\partial f\mbk{\bar x}\) is defined by 
\begin{equation}\label{eq:defVU}
\mathcal{V}f(\bar{x})\mathrel{\mathop:}= \spn(\partial f(\bar{x})-{g} ),\quad \mathcal{U}f(\bar{x})\mathrel{\mathop:}=\mathcal{V}f\mbk{\overbar{
x}}^\bot
\end{equation}
where \({g}\) is an arbitrary subgradient in \(\partial f(\bar{x})\). 

Accordingly, for any $x\in \mathbb{R}^n$
we have its two components associated with the 
decomposition via,
\begin{equation}
x_u\mathrel{\mathop:}=P_{\mathcal{U}f(\bar{x})}(x),\qquad x_v\mathrel{\mathop:}=P_{\mathcal{V}f(\bar{x})}(x).
\end{equation}   
\end{definition}
The  relative interior of the subdifferential \(\partial f(\bar{x})\) is defined by
\(\ri\partial f(\bar{x})\mathrel{\mathop:}=\mcb{g\in\mathbb{R}^n\colon g+\inte B(0,\eta)\cap \mathcal{V}\subset \partial f(\bar{x})\text{ for some }\eta>0}\).
If \(f\) is  subdifferentially regular at \(\bar{x}\), then according to \cite[Proposition 5]{Liu2020} (see also \cite[Lemma 12]{Hare2001quadratic}) the subspaces in question can also be characterized as
\begin{gather}
\mathcal{U}f(\bar{x})=N_{\partial f(\bar{x})}(g^\circ)=\mcb{w\in\mathbb{R}^n  \colon f'(\bar{x};-w)=
-f'\mbk{\bar{x};w }},\label{Uxdefalter}\\
\mathcal{V}f(\bar{x})=T_{\partial f(\bar{x})}(g^\circ),\label{Vxdefalter}
\end{gather}
When analyzing the chain rule of  \VU-decomposition we will need the validity of the standard subdifferential chain rule:
\begin{gather}\notag
\partial (h\circ F)(x) = F'(x)^\top \partial h(F(x)), \tag{CR}
\end{gather}
for a smooth mapping \(F\colon\mathbb{R}^n\to\mathbb{R}^m\) and a proper closed function \(h\colon\mathbb{R}^m\to\overbar{\mathbb{R}}\). 
A fundamental condition (see \cite[Theorem 10.6]{Rockafellar1998}) at a point \(x\in \dom (h\circ F)\) is 
\begin{equation}\label{eq:icqo} 
\Ker F'(x)^\top \cap \partial^\infty h\mbk{F(x)}=\mcb{0},
\end{equation} 
where \(\partial^\infty h(\cdot)\) is the horizon subdifferential. 
The set \(\mathcal{R}_{h,F}\) is \cite[Definition~2]{Liu2026vu}: for \(\bar x\in\dom(h\circ F)\),
\begin{enumerate}[label=(R\arabic*)]
\item \label{R1} \(F(\bar x)\in\ri(\dom h)\) and \(F\) maps a neighbourhood of \(\bar x\) into \(\aff(\dom h)\); or
\item \label{R2} \(F(\bar x)\in\dom h\setminus\ri(\dom h)\) and \eqref{eq:icqo} holds.
\end{enumerate}
This is a name for the two alternatives that give the convex chain rule, not a new qualification: (R2) is \cite[Theorem~10.6]{Rockafellar1998}, and (R1) is the relative-interior chain rule of \cite{Liu2026vu}. If \(\aff(\dom h)\neq\mathbb{R}^m\), then \(\partial^\infty h=(\aff(\dom h))^\perp\) throughout \(\ri(\dom h)\), so (R1) need not imply (R2). When \(h\) is finite-valued one has \(\partial^\infty h\equiv\{0\}\) and \(\mathcal{R}_{h,F}=\mathbb{R}^n\).
The chain rule (CR) holds at every point of \(\mathcal{R}_{h,F}\).
The one-sided set limits below use only nested inclusions; the next statement records that convention.
\begin{proposition}\label{prop:basicSemiC}
For a closed-valued mapping \(S \colon \mathbb{R}^n \rightrightarrows  \mathbb{R}^m\) and a point \(\bar x\in \mathbb{R}^n\),
if there exists a neighborhood \(\mathcal{N}\) of \(\bar{x}\) such that
\(S(x)\subset S(\bar{x})\) for all \(x\in \mathcal{N}\) then
\(\limsup_{x\rightarrow \bar x}S(x)=S(\bar{x})\).
On the other hand, if
\(S(\bar x)\subset S({x})\) for all \(x\in \mathcal{N}\) then
\(\liminf_{x\rightarrow \bar x}S(x)=S(\bar{x})\).
\end{proposition}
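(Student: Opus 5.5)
The plan is to prove both halves directly from the definitions of the Painlevé--Kuratowski limits, using the nested inclusion on one side and a constant sequence on the other. Recall that \(\limsup_{x\to\bar x}S(x)\) is the set of \(y\) for which there exist \(x_k\to\bar x\) and \(y_k\in S(x_k)\) with \(y_k\to y\). The set \(\liminf_{x\to\bar x}S(x)\) consists of those \(y\) such that for every sequence \(x_k\to\bar x\) there are \(y_k\in S(x_k)\), defined for large \(k\), with \(y_k\to y\). We always have \(\liminf\subset\limsup\). Since sequences are allowed to take the value \(\bar x\) (the convention of Rockafellar--Wets), we also have \(\liminf_{x\to\bar x}S(x)\subset \operatorname{cl}S(\bar x)\subset\limsup_{x\to\bar x}S(x)\). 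Closedness of \(S(\bar x)\) makes the middle set equal to \(S(\bar x)\).

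\emph{First assertion.} Assume \(S(x)\subset S(\bar x)\) for all \(x\in\mathcal N\).
\begin{enumerate}
\item For the inclusion \(\supset\), take \(y\in S(\bar x)\) and the constant sequences \(x_k=\bar x\), \(y_k=y\). This shows \(y\in\limsup_{x\to\bar x}S(x)\).
\item For the inclusion \(\subset\), take \(y\) in the outer limit with witnesses \(x_k\to\bar x\) and \(y_k\in S(x_k)\), \(y_k\to y\). Eventually \(x_k\in\mathcal N\), so \(y_k\in S(\bar x)\). Closedness of \(S(\bar x)\) then gives \(y\in S(\bar x)\).
\end{enumerate}

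\emph{Second assertion.} Assume \(S(\bar x)\subset S(x)\) for all \(x\in\mathcal N\).
\begin{enumerate}
\item For the inclusion \(\supset\), take \(y\in S(\bar x)\) and any sequence \(x_k\to\bar x\). For large \(k\) we have \(x_k\in\mathcal N\), so \(y\in S(x_k)\). The choice \(y_k=y\) then witnesses \(y\in\liminf_{x\to\bar x}S(x)\).
\item For the inclusion \(\subset\), take \(y\) in the inner limit and test it against the constant sequence \(x_k=\bar x\). This gives \(y_k\in S(\bar x)\) with \(y_k\to y\), and closedness yields \(y\in S(\bar x)\).
\end{enumerate}

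The only delicate point is convention-dependent. In the reverse inclusion for the inner limit we test against the constant sequence \(x_k=\bar x\), which is legitimate only if the limits are taken with \(x=\bar x\) allowed. If instead the limits are taken over \(x\ne\bar x\), the argument changes as follows. The outer-limit inclusion \(\supset\) would require \(y\in S(x_k)\) for some \(x_k\ne\bar x\) near \(\bar x\), which the first hypothesis does not provide. So under that convention the first statement holds only with \(\subset\), and the second only with \(\supset\). I would therefore state explicitly that the Rockafellar--Wets convention, which includes \(x=\bar x\), is adopted. With that convention both equalities are exact and the proof is complete.
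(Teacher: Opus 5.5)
Your proof is correct. Its two substantive steps match the paper's. Under the first nesting, closedness of \(S(\bar x)\) gives \(\limsup_{x\to\bar x}S(x)\subset S(\bar x)\). Under the second, the constant selection \(y_k=y\in S(\bar x)\subset S(x_k)\) gives \(S(\bar x)\subset\liminf_{x\to\bar x}S(x)\).

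The routes differ in the two remaining inclusions. You get them from the Rockafellar--Wets convention by testing against \(x_k\equiv\bar x\). This yields \(\liminf_{x\to\bar x}S(x)\subset S(\bar x)\subset\limsup_{x\to\bar x}S(x)\) for every closed-valued \(S\), so each assertion follows from its own one-sided hypothesis. The paper instead gets the reverse inclusion for the outer limit by assuming the opposite nesting ``in addition'', and treats the inner limit ``with the inclusions reversed''. Read literally, that proves each equality only when both nestings hold, i.e.\ when \(S\) is constant near \(\bar x\).

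Your version is the one that matches the statement and its later use. In Proposition~\ref{prop:3} only \(\mathcal{V}h(x)\subset\mathcal{V}h(u)\) is available. There \eqref{eq:sub-con0} needs exactly your constant-sequence step at \(x=u\), which is admissible because \(\partial h(u)\neq\emptyset\).

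Your caveat about conventions is also right. Suppose \(x\neq\bar x\) is enforced in the limits. Taking \(S(\bar x)=\{0\}\) and \(S(x)=\emptyset\) for \(x\neq\bar x\) breaks the first equality, and taking \(S(x)=\mathbb{R}^m\) for \(x\neq\bar x\) breaks the second. Stating the convention explicitly, as you propose, is therefore what makes the proposition true as written.
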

\begin{proof}
Suppose \(S(x)\subset S(\bar x)\) for all \(x\) near \(\bar x\), with \(S(\bar x)\) closed. Any limit of points \(y_k\in S(x_k)\) with \(x_k\to\bar x\) then lies in \(S(\bar x)\), so \(\limsup S(x)\subset S(\bar x)\). If in addition \(S(\bar x)\subset S(x)\) near \(\bar x\), the constant selection \(y_k\in S(\bar x)\subset S(x_k)\) gives the reverse inclusion, hence equality. The inner-limit statement is the same argument with the inclusions reversed.
\end{proof}
\begin{definition}[horizon outer limit]\label{def:horizon}
For a sequence  \(\{C^\nu\}_{\nu\in\mathbb N}\) 
of subsets of \(\mathbb R^n\), the \emph{horizon outer limit} is defined by
\begin{multline*}
\limsup\nolimits_{\nu}^\infty C^\nu
\mathrel{\mathop:}= \{0\}
\cup
\bigl\{x \colon \exists\text{ infinite } \mathcal{K}\subset \mathbb{ N},\ 
\exists\, x^\nu\in C^\nu,\\
\lambda^\nu \searrow 0, \ \nu\in \mathcal{K},\ 
\lambda^\nu x^\nu \urightarrow[\mathcal{K}]{} x
\bigr\}\,.
\end{multline*}
 \end{definition}
 \begin{lemma}\label{lem:horizonout}
 Let \(\{C^\nu\}_{\nu\in\mathbb N}\) be a sequence of cones in \(\mathbb R^n\). Then 
 \begin{equation}
 \limsup\nolimits_{\nu}^\infty C^\nu=\limsup\nolimits_{\nu} C^\nu\,.
 \end{equation}
 \end{lemma}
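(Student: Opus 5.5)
Both sides contain \(0\): the horizon outer limit by its definition, and the ordinary outer limit because each cone \(C^\nu\) contains \(0\) (the constant sequence \(x^\nu=0\) converges to \(0\)). Here a cone is a nonempty set closed under multiplication by \(\lambda>0\), so it contains \(0\) when closed; if the convention allows cones without the origin, I would prove the equality away from \(0\) and treat \(0\) separately as just noted. Recall that the ordinary outer limit is the set of cluster points of sequences \(x^\nu\in C^\nu\) along an infinite \(\mathcal K\subset\mathbb N\). The plan is to prove the two inclusions directly from Definition~\ref{def:horizon}, using only that each \(C^\nu\) is invariant under positive scaling.

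\emph{Inclusion \(\limsup_\nu C^\nu\subset\limsup^\infty_\nu C^\nu\).} Take \(x\ne 0\) with \(x^\nu\in C^\nu\) and \(x^\nu\to x\) along an infinite \(\mathcal K\). Choose any \(\lambda^\nu\searrow 0\), for instance \(\lambda^\nu=1/\nu\), and set \(y^\nu:=x^\nu/\lambda^\nu\). Since \(C^\nu\) is a cone, \(y^\nu\in C^\nu\). Moreover \(\lambda^\nu y^\nu=x^\nu\to x\) along \(\mathcal K\), so \(x\) belongs to the horizon outer limit.

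\emph{Inclusion \(\limsup^\infty_\nu C^\nu\subset\limsup_\nu C^\nu\).} Take \(x\ne0\) with \(x^\nu\in C^\nu\), \(\lambda^\nu\searrow0\), and \(\lambda^\nu x^\nu\to x\) along \(\mathcal K\). Set \(z^\nu:=\lambda^\nu x^\nu\). Since \(\lambda^\nu>0\) and \(C^\nu\) is a cone, \(z^\nu\in C^\nu\). As \(z^\nu\to x\) along \(\mathcal K\), the point \(x\) lies in the ordinary outer limit.

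The only delicate point is the bookkeeping of the definitions. First, the ordinary outer limit must be read as allowing subsequences, so that both sides use an infinite index set \(\mathcal K\). Second, \(\lambda^\nu>0\) must hold so that the cone property applies; this is implicit in \(\lambda^\nu\searrow0\) and I would state it explicitly. Neither closedness nor convexity of the \(C^\nu\) is needed.
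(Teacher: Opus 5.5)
Your proof is correct and is the same argument the paper compresses into one line: the cone property absorbs the factors $\lambda^\nu$ in both directions ($z^\nu=\lambda^\nu x^\nu\in C^\nu$ and $y^\nu=x^\nu/\lambda^\nu\in C^\nu$), which the paper states and backs with a citation to Rockafellar--Wets instead of writing out. Your hedge about cones without the origin is unnecessary, since under the Rockafellar--Wets convention the paper follows every cone contains $0$; if you did drop that convention, the origin in $\limsup_\nu C^\nu$ would come from scaling some $w^\nu\in C^\nu$ down to $0$, not from the constant sequence you cite.
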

 \begin{proof}
Cones absorb the factors \(\lambda^\nu\searrow 0\) in the horizon outer limit, so the two limits coincide \cite[Theorem~4.25(b)]{Rockafellar1998}.
\end{proof}
 \begin{lemma}[Separation Criterion for Convex Sets and Affine Subspaces]
\label{thm:separation}
Let \(A \subset \mathbb{R}^n\) be a nonempty convex set, let \(y \in A\), let \(L \subset \mathbb{R}^n\) be a linear subspace, and set \(B := y + L\). Then 
\[
A \text{ and } B \text{ cannot be separated }\Longleftrightarrow
L^\perp  \cap N_A(y)  = \{0\}
\]
\end{lemma}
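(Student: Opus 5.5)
Both implications will be proved by contraposition, reading ``separated'' in the usual (not necessarily proper) sense: there is a nonzero \(v\) with \(\sup_{a\in A}\langle v,a\rangle\le\inf_{b\in B}\langle v,b\rangle\). Since \(B=y+L\) is affine, the infimum over \(B\) is \(-\infty\) unless \(v\in L^\perp\). For \(v\in L^\perp\) the inner product \(\langle v,\cdot\rangle\) is constant on \(B\), with value \(\langle v,y\rangle\). Hence \(A\) and \(B\) can be separated if and only if there is a nonzero \(v\in L^\perp\) with
\[
\langle v,a\rangle\le\langle v,y\rangle\quad\text{for all }a\in A.
\]
Because \(y\in A\) and \(A\) is convex, this inequality says exactly that \(v\in N_A(y)\). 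For a convex set the normal cone in the sense of Rockafellar--Wets coincides with the convex-analysis normal cone \cite[Theorem~6.9]{Rockafellar1998}.

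Consequently, separability of \(A\) and \(B\) is equivalent to \(L^\perp\cap N_A(y)\neq\{0\}\). Negating both sides gives the stated equivalence.

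The two directions unpack as follows.

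\emph{From an intersection point to a separation.} Suppose \(0\neq v\in L^\perp\cap N_A(y)\). Then \(\langle v,a\rangle\le\langle v,y\rangle=\langle v,y+\ell\rangle\) for every \(a\in A\) and \(\ell\in L\). So the hyperplane \(\{z:\langle v,z\rangle=\langle v,y\rangle\}\) separates \(A\) from \(B\).

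\emph{From a separation to an intersection point.} Suppose \(v\neq0\) separates \(A\) and \(B\), say \(\sup_A\langle v,\cdot\rangle\le\inf_B\langle v,\cdot\rangle\). The infimum on the right is at least \(\langle v,y\rangle>-\infty\), because \(y\in A\). Letting \(\ell\) range over \(\pm tL\) with \(t\to\infty\) then forces \(\langle v,\ell\rangle=0\) for all \(\ell\in L\), so \(v\in L^\perp\). Finally, using \(y\in A\cap B\),
\[
\langle v,a\rangle\le\inf_B\langle v,\cdot\rangle=\langle v,y\rangle\quad\text{for all }a\in A,
\]
which says \(v\in N_A(y)\).

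The main point requiring care is the convention for separation. If ``separated'' meant proper separation, the degenerate case \(A\cup B\subset\) a single hyperplane would need separate treatment. With the non-strict, nonzero-normal convention used above, and since \(y\) lies in both sets, no existence theorem for separating hyperplanes is needed. The argument is therefore elementary once the convention is fixed, and the only external fact used is the identification of \(N_A(y)\) for convex \(A\).
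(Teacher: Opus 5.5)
Your proof is correct, and its skeleton matches the paper's. A hyperplane separating \(A\) from \(y+L\) must pass through \(y\) with normal in \(L^\perp\), which is exactly a nonzero element of \(L^\perp\cap N_A(y)\), and that same hyperplane gives the converse.

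Where you differ is the convention, and the difference is substantive. The paper phrases the forward step through a \emph{proper} separation and cites \cite[Theorem~11.3]{Rockafellar1970}. That existence theorem plays no role, since the separating hyperplane is given, not constructed. Your direct unpacking needs nothing beyond the formula for \(N_A(y)\).

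More importantly, ordinary separation is not merely the convenient reading but the only one under which the lemma holds. With proper separation, take \(A=B=y+L\) with \(L\neq\mathbb{R}^n\). Then \(L^\perp\cap N_A(y)=L^\perp\neq\{0\}\), yet \(A\) and \(B\) cannot be properly separated: the supporting hyperplane produced by the converse contains both sets. So your closing remark should be sharpened: the degenerate case is a counterexample, not a case needing separate treatment.

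Your reading is also the one the paper needs downstream. In Lemma~\ref{prop:subspaces}, two subspaces can never be properly separated, whereas they fail to be separated in your sense exactly when they sum to the whole space.
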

\begin{proof}
A proper separation of the convex set \(A\) from the affine set \(B\) produces a nonzero normal to \(A\) at \(y\) orthogonal to \(L\) \cite[Theorem~11.3]{Rockafellar1970}. The converse is the same supporting hyperplane. Theorem~6.9 of \cite{Rockafellar1998} is the tangent--normal relation for a convex set, not this criterion.
\end{proof}
The exact chain rule
\[
\mathcal{V}(h\circ F)(\bar x)=F'(\bar x)^\top\mathcal{V}h(F(\bar x)),
\qquad
\mathcal{U}(h\circ F)(\bar x)=F'(\bar x)^{-1}\bigl(\mathcal{U}h(F(\bar x))\bigr)
\]
under \(\bar x\in\mathcal{R}_{h,F}\), together with the separable, convex-sum, smooth-plus-lsc, model, and finite-max formulae, is \cite[Theorem~1 and Propositions~2--6]{Liu2026vu}.
We use that calculus as a black box: its value here is only to convert an outer \(\varepsilon\)-statement on \(h\) into one on \(f=h\circ F\).
\begin{assumption}\label{as:2}
Given \(h\in\mathcal{H}\) and a mapping \(F:\mathbb{R}^n\to\mathbb{R}^m\) of class \(C^1\), the point \(x\) lies in \(\mathcal{R}_{h,F}\): either \ref{R1} or \ref{R2} holds at \(x\).
\end{assumption}

\subsection{Enlargements and \texorpdfstring{\(\varepsilon\)-\(\mathcal{VU}\)}{epsilon-VU} spaces}\label{ssec:epsO}
The symbols special to this subsection are collected here.
\begin{description}[leftmargin=*, labelwidth=!,itemindent=!, align=left]
  \item[\(\partial_\varepsilon f(x)\)] the convex \(\varepsilon\)-subdifferential of \(f\) at \(x\)
  \item[\(\bar{\partial}_\varepsilon f(x)\)] a proper approximation of \(\partial f(x)\) contained in \(\partial_\varepsilon f(x)\)
  \item[\(\mathcal{V}_\varepsilon f(x)\), \(\mathcal{U}_\varepsilon f(x)\)] the \(\varepsilon\)-\(\mathcal{VU}\) subspaces associated with \(\bar{\partial}_\varepsilon f(x)\), Definition~\ref{def:epsVU}
  \item[\(\mathcal{M}_\varepsilon(x)\)] the approximate leaf \(F^{-1}\bigl(F(x)+\mathcal{U}_\varepsilon h(F(x))\bigr)\), Theorem~\ref{thm:Meps}
\end{description}
The standard convex \(\varepsilon\)-subdifferential is written \(\partial_\varepsilon h(x)\). An \emph{enlargement} \(\bar{\partial}_\varepsilon h(x)\) is a (possibly strictly smaller) set-valued approximation of \(\partial h(x)\) contained in \(\partial_\varepsilon h(x)\); the two objects are not interchangeable. 
\begin{definition}[Proper approximation of subdifferential {\cite[Definition~3.2]{Liu2026vu}}]\label{def:properapprox}
Let \(f\colon \mathbb{R}^n\rightarrow \lowoverline{\mathbb{R}}\) be proper and lsc, and let \(\bar{\partial}_\varepsilon f(x)\colon\mathbb{R}^n\times \mathbb{R}_{\geq 0}\rightrightarrows\mathbb{R}^n
\) be a convex-valued mapping such that 
\begin{gather}
\partial f\mbk{{x}}\subseteq \bar{\partial}_\varepsilon f(x)\text{ and } \bar{\partial}_0 f(x)=\partial f\mbk{x}\text{ for all }x\in \mathbb{R}^n\,.
\end{gather}
Then we call   \(\bar{\partial}_\varepsilon f(x)\) a proper approximation of \(\partial f\mbk{x}\)\,.
\end{definition}
\begin{definition}[\(\varepsilon\)-\VU-decomposition {\cite[Definition~3.3]{Liu2026vu}}]\label{def:epsVU}
Consider a proper, lsc function \(f\)  and a point \(\bar{x}\) with \(\partial f\mbk{\bar{x}}\not=\emptyset\). 
Let \(\bar{\partial}_\varepsilon f(x)\) be a proper approximation of \(\partial f\mbk{x}\). 
The \emph{\(\varepsilon\)-\VU-decomposition associated with \(\bar{\partial}_\varepsilon f(\bar x)\)} is defined by two subspaces
\begin{gather}
\mathcal{V}_\varepsilon f(x)\mathrel{\mathop:}=\spn (\bar{\partial}_\varepsilon f(x)-g),\text{ for an arbitrary }g\in \bar{\partial}_\varepsilon f(x)\,,
\label{def:vep}\\
\mathcal{U}_{\varepsilon}f(x)\mathrel{\mathop:}=N_{\bar{\partial}_\varepsilon f(x)}(g^\circ),\text{ for an arbitrary } g^\circ\in\ri \bar{\partial}_\varepsilon f(x)\,.\label{def:uep}
\end{gather}
\end{definition}
The following proposition shows that the \(\varepsilon\)-\VU-decomposition is indeed a  decomposition of \(\mathbb{R}^n\). 
\begin{proposition}[equivalent descriptions {\cite[Proposition~3.8]{Liu2026vu}}]\label{prop:uepsaltdef}
In Definition \ref{def:epsVU}, we have
\begin{gather}
\mathcal{U}_{\varepsilon}f(x)=\mcb{w\in\mathbb{R}^n\colon \mang{g-g^{\circ},w}=0\text{ for all } g\in\bar{\partial}_\varepsilon f(x)}\,,\ 
\forall g^\circ\in\ri \bar{\partial}_\varepsilon f(x)\,,\notag
 \\
\mathcal{U}_{\varepsilon}f(x)=\mathcal{V}_\varepsilon f(x)^{\perp}\label{eq:4}\,.
\end{gather}
\end{proposition}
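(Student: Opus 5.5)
The statement to prove is Proposition~\ref{prop:uepsaltdef}. Both identities are pure convex-geometry facts about the single nonempty convex set \(C:=\bar\partial_\varepsilon f(x)\) (convex-valued by Definition~\ref{def:properapprox}), with \(x\) and \(\varepsilon\) fixed. Nonemptiness follows from \(\emptyset\neq\partial f(x)\subseteq C\); here the point \(x\) with \(\partial f(x)\neq\emptyset\) is the one of Definition~\ref{def:epsVU}. Accordingly I would drop all reference to \(f\) and work with \(C\), its affine hull, and the subspace \(L:=\spn(C-g)\). The first step is to check that \(L\) does not depend on the choice \(g\in C\). For \(g,g'\in C\), every \(c-g'\) equals \((c-g)-(g'-g)\), so \(C-g'\subset\spn(C-g)\); by symmetry the spans agree, and \(L\) is the subspace parallel to \(\aff C\). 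Thus \(\mathcal{V}_\varepsilon f(x)=L\) is well defined. Since \(C\) is a nonempty convex set in finite dimension, \(\ri C\neq\emptyset\) \cite[Theorem~6.2]{Rockafellar1970}, so a \(g^\circ\) as in \eqref{def:uep} exists.

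The main step is to show \(N_C(g^\circ)=L^\perp\) for \(g^\circ\in\ri C\). This gives \eqref{eq:4} directly. Since \(\langle g-g^\circ,w\rangle=0\) for all \(g\in C\) exactly when \(w\perp\spn(C-g^\circ)=L\), it also gives the first displayed description, for every choice of \(g^\circ\).

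For the inclusion \(L^\perp\subset N_C(g^\circ)\): if \(w\perp L\), then \(\langle c-g^\circ,w\rangle=0\le0\) for all \(c\in C\), so \(w\in N_C(g^\circ)\). For the reverse inclusion, take \(w\in N_C(g^\circ)\) and \(c\in C\). By the relative-interior property there is \(t>0\) with \(g^\circ-t(c-g^\circ)\in C\); this uses the description of \(\ri\) via \(\eta\)-balls in the affine hull, or equivalently \cite[Theorem~6.4]{Rockafellar1970}. The normal inequality then gives \(\langle c-g^\circ,w\rangle\le0\) and \(-t\langle c-g^\circ,w\rangle\le0\), so \(\langle c-g^\circ,w\rangle=0\). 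Hence \(w\perp C-g^\circ\), and by linearity \(w\perp L\).

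The only delicate point is this extension step: one must ensure that the segment can be prolonged slightly beyond \(g^\circ\) inside \(C\). That is exactly the defining property of \(\ri\) relative to \(\aff C\), where the small ball is taken in \(\aff C=g^\circ+L\). Note that the paper's definition of \(\ri\) uses the space \(\mathcal{V}\), which here must be read as \(L\). Since \(c-g^\circ\in L\), the point \(g^\circ-t(c-g^\circ)\) lies in \(g^\circ+(\inte B(0,\eta)\cap L)\) for small \(t\), and so in \(C\). Closedness of \(C\) is never needed, and independence from \(g^\circ\) follows because \(L^\perp\) does not depend on it.
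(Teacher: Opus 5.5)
Your proof is correct and complete. The paper gives no argument of its own here (it defers to \cite[Proposition~3.8]{Liu2026vu}), so you are supplying a self-contained proof of the standard fact that, at a relative-interior point, the normal cone of a nonempty convex set is the orthogonal complement of the subspace parallel to its affine hull. This is the same fact that underlies the exact identity \eqref{Uxdefalter}.

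Your argument uses only convexity and nonemptiness, never closedness. That is all Definition~\ref{def:properapprox} guarantees, and it is what is needed when the paper later applies the proposition to the transported sets \(F'(x)^\top\bar\partial_\varepsilon h(F(x))\). Those sets are linear images of convex sets and need not be closed.
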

The equivalent descriptions are \cite[Proposition~3.8]{Liu2026vu}; the proof is not repeated. Both spaces are independent of the auxiliary vectors, and $\mathbb{R}^n=\mathcal{V}_\varepsilon f(x)\oplus\mathcal{U}_\varepsilon f(x)$.

Now we would like to extend the results associated with the \(\varepsilon\)-objects to a composite function of the form  \(f(x)=h\mbk{F(x)}\) for a smooth mapping \(F\colon\mathbb{R}^n\to\mathbb{R}^m\) and a function \(h\in\mathcal{H}\). 
Suppose that \(\bar{\partial}_\varepsilon h(\cdot)\), a proper approximation of \(\partial h\mbk{\cdot}\) is available. 
Assumption \ref{as:2} is a condition at one point. At each \(x\in\dom f\) at which it holds and \(\bar{\partial}_\varepsilon h(F(x))\neq\emptyset\), the transported set \(F'(x)^\top\bar{\partial}_\varepsilon h(F(x))\) is the enlargement used below; it is not claimed to be a proper approximation on all of \(\dom f\). Write 
\begin{gather}\notag
\bar{\partial}_\varepsilon f(x)\mathrel{\mathop:}= F'(x)^\top\bar{\partial}_\varepsilon h(F(x)), \text{ for all }x\in \dom f\,.
\end{gather}
Then \(\mathcal{V}_\varepsilon f(x)\) and \(\mathcal{U}_{\varepsilon}f(x)\) can be defined from Definition \ref{def:epsVU} and satisfies Proposition \ref{prop:uepsaltdef}.
The chain rule for the enlargement is the same linear algebra as the exact one, with \(\partial h\) replaced by \(\bar\partial_\varepsilon h\).
\begin{proposition}\label{prop:epsVUchain}
Suppose \(f(x)=h\circ F(x)\) for a smooth mapping \(F:\mathbb{R}^n\to\mathbb{R}^m\) and \(h\in\Gamma_0(\mathbb{R}^m)\), and let \(\bar\partial_\varepsilon h\) be a proper approximation of \(\partial h\).
If \(x\in\mathcal{R}_{h,F}\) and \(\bar\partial_\varepsilon h(F(x))\neq\emptyset\), then
\begin{gather}
\mathcal{V}_\varepsilon f(x)=F'(x)^\top\mathcal{V}_\varepsilon h\bigl(F(x)\bigr),\label{eq:uvefcom}\\
\mathcal{U}_\varepsilon f(x)=\bigl\{y:F'(x)y\in\mathcal{U}_\varepsilon h\bigl(F(x)\bigr)\bigr\}.\notag
\end{gather}
This is \cite[Proposition~3.9]{Liu2026vu}.
\end{proposition}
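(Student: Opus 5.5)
The proof is linear algebra applied to the transported set \(\bar\partial_\varepsilon f(x)=F'(x)^\top\bar\partial_\varepsilon h(F(x))\). Write \(A:=F'(x)^\top\) and \(C:=\bar\partial_\varepsilon h(F(x))\), which is a nonempty convex set. Note that \(\bar\partial_\varepsilon f(x)=AC\) by definition, so the chain rule (CR) is not needed for the enlargement itself. Membership \(x\in\mathcal{R}_{h,F}\) only guarantees that \(\partial f(x)=A\,\partial h(F(x))\), and hence that \(\bar\partial_0 f(x)=\partial f(x)\) is consistent with Definition~\ref{def:properapprox}. The plan is to prove the \(\mathcal{V}\)-formula directly from the span definition \eqref{def:vep}, and then obtain the \(\mathcal{U}\)-formula by taking orthogonal complements, using Proposition~\ref{prop:uepsaltdef}.

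\emph{Step 1 (\(\mathcal{V}\)-formula).} Fix \(g\in C\). Then \(Ag\in AC\), and by linearity
\[
AC-Ag=A(C-g).
\]
The span commutes with a linear map: \(\spn(A(C-g))=A\,\spn(C-g)\), because the image of a linear combination is the corresponding combination of images. By \eqref{def:vep}, which is independent of the base point, this gives
\[
\mathcal{V}_\varepsilon f(x)=A\,\mathcal{V}_\varepsilon h(F(x)),
\]
which is \eqref{eq:uvefcom}.

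\emph{Step 2 (\(\mathcal{U}\)-formula).} By \eqref{eq:4} applied to \(f\),
\[
\mathcal{U}_\varepsilon f(x)=\bigl(A\,\mathcal{V}_\varepsilon h(F(x))\bigr)^\perp .
\]
For any subspace \(W\) one has \((A W)^\perp=\{y:\langle Aw,y\rangle=0\ \forall w\in W\}=\{y:A^\top y\in W^\perp\}\). Here \(A^\top=F'(x)\), and by \eqref{eq:4} applied to \(h\), \(W^\perp=\mathcal{U}_\varepsilon h(F(x))\). Hence
\[
\mathcal{U}_\varepsilon f(x)=\{y:F'(x)y\in\mathcal{U}_\varepsilon h(F(x))\}.
\]
Alternatively, one can use the first description in Proposition~\ref{prop:uepsaltdef} directly. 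One needs a point \(g^\circ\in\ri(AC)\); Rockafellar's Theorem~6.6 gives \(\ri(AC)=A\,\ri C\), so \(g^\circ=Ag_0\) with \(g_0\in\ri C\). Then the condition \(\langle A(g-g_0),y\rangle=0\) for all \(g\in C\) is equivalent to \(\langle g-g_0,F'(x)y\rangle=0\) for all \(g\in C\), which is exactly \(F'(x)y\in\mathcal{U}_\varepsilon h(F(x))\).

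\emph{Main obstacle.} The only delicate point is justifying that Definition~\ref{def:epsVU} and Proposition~\ref{prop:uepsaltdef} apply to \(f\) at \(x\) with \(\bar\partial_\varepsilon f(x)\) the transported set, even though this set is not asserted to be a proper approximation on all of \(\dom f\). I would handle this by noting that Proposition~\ref{prop:uepsaltdef} uses only that the set at the single point \(x\) is nonempty and convex. Both properties hold here: \(AC\) is convex as the linear image of a convex set, and nonempty because \(C\neq\emptyset\). The role of \(\mathcal{R}_{h,F}\) is just to make the case \(\varepsilon=0\) reproduce the exact chain rule.
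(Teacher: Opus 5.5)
Your proof is correct and is the argument the paper has in mind. The paper gives no proof of its own: it defers to \cite[Proposition~3.9]{Liu2026vu} and describes that result as the exact chain-rule computation with \(\partial h\) replaced by \(\bar\partial_\varepsilon h\), i.e.\ the span commutes with \(F'(x)^\top\) and the \(\mathcal{U}\)-formula follows by orthogonal complements via \eqref{eq:4}. Your remark is also accurate: the two identities are pure linear algebra on the transported set, and \(x\in\mathcal{R}_{h,F}\) is needed only so that this set recovers \(\partial f(x)\) at \(\varepsilon=0\).
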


\section{Properties of class \texorpdfstring{\(\mathcal{H}\)}{H}}
A function is sublinear in the sense of Definition~\ref{def:sublinear}: subadditive and positively homogeneous of degree \(1\), equivalently convex and positively homogeneous of degree \(1\). 
There is a one-to-one correspondence between functions in \( \mathcal{H}\) and the nonempty, closed, convex subsets \(D\) of \(\mathbb{R}^n\): 
\begin{equation}\notag
h \longleftrightarrow D\ \text{with }h=\sigma_D\text{ and }D=\mcb{v\colon \mang{v,\cdot}\leq h}\,.
\end{equation}
We summarize the basic properties of functions in class \(\mathcal{H}\). 
 \begin{proposition}\label{prop:sub}
If \(h\in\mathcal{H}\) then 
\begin{enumerate}[label=\textnormal{(\roman*)}]
\item \(h(0)=0\);\label{it1:1}
\item \(h(\cdot)	=\sigma_{\partial h	(0)}(\cdot)\) where \(\partial h(0)=\mcb{s\in\mathbb{R}^n	\colon h(x)\geq \mang{s,x}\ \forall x\in\mathbb{R}^n}\);\label{it1:2}
\item \(\partial h(x)=\mcb{s\in \partial h(0)\colon h(x)=\mang{s,x}}=F_{\partial h(0)}(x)\);\label{it:1-3}
\item \(\partial_\varepsilon h(x)=\mcb{s\in\partial h(0)\colon \mang{s,x}\geq h(x)-\varepsilon}\);\label{it:1-4}
\item \(\partial_\varepsilon h(x)\subset \partial h(0)=\partial_\varepsilon h(0), \ \fa \varepsilon\geq 0, x\in \mathbb{R}^n\);\label{it:1-5}
\item \(h\) is finite valued if and only if \(\partial h(0)\) is bounded.\label{it:1-8}
\item if  \(0\in \inte \dom h\)
then  \(\lim_{\substack{x\to 0\\\varepsilon\searrow \bar \varepsilon }}\partial_\varepsilon h(x)=\partial h(0)\) for any \(\bar\varepsilon>0\).\label{it:1-7}
\item \(h'(0;d)=h(d)\)\label{item:direcDir}
\item \(h\) has a global minimum at \(0\) if and only if \(h(x)\geq 0\) for all \(x\in \mathbb{R}^n\). \label{it:10}
\item \label{it:3111}\(h\) is piecewise linear if and only if \(\partial h(0)\) is polyhedral.
\end{enumerate}
\end{proposition}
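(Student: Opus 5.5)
The plan is to write $D:=\partial h(0)$ and derive everything from the support-function representation $h=\sigma_D$ and the subgradient inequality, taking items in the order listed since each leans on the previous ones.

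\textbf{(i)--(v), (viii).} Item (i) follows from positive homogeneity: $h(0)=h(t\cdot 0)=t\,h(0)$ for all $t>0$, so $h(0)\in\{0,\pm\infty\}$. Properness excludes $-\infty$. Closedness excludes $+\infty$: choose $x\in\dom h$; then $tx\to 0$ as $t\searrow0$ and $h(tx)=t\,h(x)\to 0$, so lower semicontinuity gives $h(0)\le 0$. With $h(0)=0$, the subgradient inequality at $0$ gives exactly the description of $D$ in (ii). The equality $h=\sigma_D$ is the correspondence $h\leftrightarrow D$ quoted before the proposition \cite[Theorem~13.2]{Rockafellar1970}, and this $D$ is precisely $\{v:\langle v,\cdot\rangle\le h\}$. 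For (iii) and (iv), if $s\in\partial_\varepsilon h(x)$ then $h(y)\ge h(x)+\langle s,y-x\rangle-\varepsilon$ for all $y$. Substitute $y=tz$, divide by $t$ and let $t\to\infty$; this gives $h(z)\ge\langle s,z\rangle$, so $s\in D$. Substituting $y=0$ gives $\langle s,x\rangle\ge h(x)-\varepsilon$. Conversely, these two conditions give $h(y)\ge\langle s,y\rangle\ge h(x)+\langle s,y-x\rangle-\varepsilon$. Setting $\varepsilon=0$ and using $\langle s,x\rangle\le h(x)$ yields (iii), which is the exposed face $F_D(x)$ by definition. Item (v) is then immediate from (iv), with $h(0)=0$. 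For (viii), $h'(0;d)=\lim_{t\searrow0}(h(td)-0)/t=h(d)$ by homogeneity.

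\textbf{(vi), (ix), (x).} For (vi): if $D$ is bounded, then $\sigma_D$ is finite. Conversely, a finite convex $h$ is continuous, hence bounded by some $M$ on the unit ball, and then $\|s\|\le M$ for every $s\in D$. For (ix), a global minimum at $0$ means $h\ge h(0)=0$. For (x), $\sigma_D$ is piecewise linear (polyhedral) if and only if $D$ is polyhedral \cite[Corollary~19.2.1]{Rockafellar1970}, since conjugation exchanges $\sigma_D$ and $\delta_D$ and preserves polyhedrality.

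\textbf{(vii).} This is the main obstacle. The inclusion $\limsup\subset D$ follows from (v) and closedness of $D$, via Proposition~\ref{prop:basicSemiC}. For the $\liminf$, fix $s\in D$ and $(x,\varepsilon)\to(0,\bar\varepsilon)$ with $\bar\varepsilon>0$. Since $0\in\inte\dom h$, $h$ is finite and continuous near $0$, so $h(x)\to 0$. Then $\langle s,x\rangle-h(x)+\varepsilon\to\bar\varepsilon>0$, and by (iv) we get $s\in\partial_\varepsilon h(x)$ eventually. Hence $D\subset\liminf$, using the constant selection. Boundedness of $D$ from (vi) is not needed, only continuity of $h$ at $0$. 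The subtle point is that the limit must be taken with $\bar\varepsilon>0$; at $\bar\varepsilon=0$ the faces $F_D(x)$ can be much smaller than $D$, which is why the hypothesis is stated that way.
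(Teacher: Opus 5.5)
Your proof is correct, and it is more self-contained than the paper's. The paper disposes of (i)--(vi) by citing \cite[Corollary~8.25]{Rockafellar1998} and Hiriart-Urruty--Lemar\'echal, of (x) by \cite[Theorem~19.1]{Rockafellar1970}, and of (vii) by the general continuity of \((x,\varepsilon)\mapsto\partial_\varepsilon h(x)\) on \(\operatorname{int}(\operatorname{dom}h)\times(0,\infty)\) from \cite{HiriartUrruty1993b}; only (viii) and (ix) are argued from homogeneity, as you do. You instead derive the slice formula (iv) directly from the \(\varepsilon\)-subgradient inequality (via \(y=tz\), \(t\to\infty\), and \(y=0\)) and read the remaining items off it.

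The substantive difference is (vii). The paper uses a theorem valid for every convex function at every interior point. You use two facts specific to sublinear \(h\): each \(\partial_\varepsilon h(x)\) is a one-inequality slice of the fixed closed set \(D\), and \(\partial_{\bar\varepsilon}h(0)=D\). So the outer limit is mere containment, and the inner limit is a constant selection driven by the strict slack \(\langle s,x\rangle-h(x)+\varepsilon\to\bar\varepsilon>0\). That buys transparency and shows exactly why \(\bar\varepsilon=0\) is excluded (compare Remark~\ref{rem:simul}). The price is that your argument is tied to the limit point \((0,\bar\varepsilon)\), whereas the cited result gives continuity at every \((x,\bar\varepsilon)\) with \(x\) interior to the domain.

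A small sharpening: \(\operatorname{dom}h\) is a convex cone, so \(0\in\operatorname{int}(\operatorname{dom}h)\) forces \(\operatorname{dom}h=\mathbb{R}^n\). The hypothesis of (vii) is therefore exactly finite-valuedness, equivalently boundedness of \(D\) by (vi). Your remark that boundedness is not needed is true of the argument, but it does not widen the statement.
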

\begin{proof}
Items (i)--(vi) are \cite[Corollary~8.25]{Rockafellar1998} and \cite[Ch.~VI, Example~3.1; Ch.~XI, Example~1.2.5]{HiriartUrruty1993a}. Item (xi) is \cite[Theorem~19.1]{Rockafellar1970}: a support function is polyhedral if and only if the set it supports is polyhedral.
Item (vii) is the continuity of \((x,\varepsilon)\mapsto\partial_\varepsilon h(x)\) on \(\operatorname{int}(\operatorname{dom} h)\times(0,\infty)\) \cite[Ch.~XI, pp.~128--129]{HiriartUrruty1993b}.
The remaining items are immediate from positive homogeneity and (ii).
\end{proof}
The exact and \(\varepsilon\)-\(\mathcal{VU}\) spaces are defined only on \(\dom\partial h\). For \(h\in\mathcal{H}\) that set is classical and needs no new letter.
\begin{proposition}\label{prop:domsub}
Let \(h\in\mathcal{H}\) and write \(D:=\partial h(0)\). Then
\begin{enumerate}[label=\textnormal{(\roman*)}]
\item \(0\in\dom\partial h\) and \(\ri(\dom h)\subset\dom\partial h\subset\dom h\);
\item \(\dom\partial h=\mcb{x\colon F_D(x)\neq\emptyset}\), the directions in which the closed convex set \(D\) admits a supporting hyperplane;
\item if \(h\) is finite-valued, equivalently if \(D\) is bounded, then \(\dom\partial h=\mathbb{R}^n\);
\item if \(h\) is polyhedral then \(\dom\partial h=\dom h\).
\end{enumerate}
\end{proposition}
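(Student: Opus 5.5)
The plan is to read everything off Proposition~\ref{prop:sub}\ref{it:1-3}, which writes \(\partial h(x)\) as the exposed face \(F_D(x)=\{s\in D:\langle s,x\rangle=h(x)\}\) with \(D=\partial h(0)\). I would prove (ii) first and then derive (i), (iii) and (iv) from it together with standard convex-analysis facts.

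For (ii): Proposition~\ref{prop:sub}\ref{it:1-3} says \(\partial h(x)=F_D(x)\) whenever \(x\in\dom h\), so \(x\in\dom\partial h\) if and only if \(F_D(x)\neq\emptyset\). One has to check that the formula does not give spurious points outside \(\dom h\). If \(h(x)=+\infty\), then \(\sigma_D(x)=+\infty\) and no \(s\in D\) attains it, so \(F_D(x)=\emptyset\), consistent with \(\partial h(x)=\emptyset\). The geometric phrasing is immediate: \(F_D(x)\neq\emptyset\) means the hyperplane \(\{s:\langle s,x\rangle=\sigma_D(x)\}\) supports \(D\) at some point.

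For (i):
\begin{itemize}
\item \(0\in\dom\partial h\) because \(F_D(0)=D\), which is nonempty since \(h\) is proper (Proposition~\ref{prop:sub}\ref{it1:2}).
\item \(\ri(\dom h)\subset\dom\partial h\) is the standard fact that a proper convex function is subdifferentiable on the relative interior of its domain \cite[Theorem~23.4]{Rockafellar1970}.
\item \(\dom\partial h\subset\dom h\) holds by definition of the subdifferential.
\end{itemize}

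For (iii): if \(D\) is bounded (equivalently \(h\) is finite, by Proposition~\ref{prop:sub}\ref{it:1-8}), then \(D\) is compact. The linear function \(\langle\cdot,x\rangle\) therefore attains its maximum on \(D\), so \(F_D(x)\neq\emptyset\) for every \(x\), and (ii) gives the claim.

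For (iv): if \(h\) is polyhedral, then \(D\) is polyhedral by Proposition~\ref{prop:sub}\ref{it:3111}. A linear function that is bounded above on a nonempty polyhedron attains its supremum there (the linear-programming theorem, \cite[Corollary~27.3.1]{Rockafellar1970} or \cite[Theorem~19.1]{Rockafellar1970}). Hence \(F_D(x)\neq\emptyset\) exactly when \(\sigma_D(x)<\infty\), i.e.\ when \(x\in\dom h\). Combined with (ii), this gives \(\dom\partial h=\dom h\).

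The only step needing care is (ii) at points outside \(\dom h\), plus citing LP attainment precisely in (iv); the rest is routine.
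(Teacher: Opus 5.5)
Your proof is correct. For (ii) and (iii) it is the paper's argument: (ii) is read off Proposition~\ref{prop:sub}\ref{it:1-3}, and (iii) is attainment of a linear form on the compact set \(D\). The differences are in (i) and (iv).

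For (iv), the paper cites the subdifferentiability of a polyhedral convex function at every point of its domain \cite[Theorem~23.10]{Rockafellar1970}. That fact holds for any polyhedral convex function and uses no sublinear structure. You instead stay inside the face description. You pass to polyhedrality of \(D\) via Proposition~\ref{prop:sub}\ref{it:3111}, so (iv) becomes the dual statement that a linear form bounded above on a nonempty polyhedron attains its supremum. This makes all four items consequences of (ii), at the cost of one extra step. The attainment is cleanest from the finite-generation form of \cite[Theorem~19.1]{Rockafellar1970}: boundedness above forces \(\langle c^j,x\rangle\le 0\) for every generating direction, so the supremum is a maximum over finitely many points.

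For \(0\in\dom\partial h\), your argument \(F_D(0)=D\neq\emptyset\) is the better one. The paper appeals to \cite[Theorem~23.4]{Rockafellar1970} together with \(h(0)=0\), but taken literally that only reaches points of \(\ri(\dom h)\), and \(0\) need not lie there. For \(h=\delta_{\mathbb{R}_+}\) one has \(\ri(\dom h)=(0,\infty)\), yet \(\partial h(0)=(-\infty,0]\neq\emptyset\). What actually carries the claim is the nonemptiness of \(D\), which you correctly derive from properness of \(h=\sigma_D\).
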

\begin{proof}
Item (i) is \cite[Theorem~23.4]{Rockafellar1970} together with \(h(0)=0\). Item (ii) is Proposition~\ref{prop:sub}\ref{it:1-3}. Item (iii) is Proposition~\ref{prop:sub}\ref{it:1-8}: a compact convex set is supported in every direction. Item (iv) is the subdifferentiability of a polyhedral convex function throughout its domain \cite[Theorem~23.10]{Rockafellar1970}.
\end{proof}
The next example shows that a supporting face may fail on the boundary of \(\dom h\).
\begin{example}\label{eg:emptysub}
A supporting face may fail on the boundary of \(\dom h\).
Let \(D=\operatorname{epi}(\exp)\subset\mathbb{R}^2\) and set \(h=\sigma_D\).
For every \(y<0\) one has \(h(0,y)=0\), so the closed ray
\(\{0\}\times(-\infty,0]\) lies in \(\dom h\).
The supremum \(\sup_{s\in\mathbb{R}} y e^{s}\) equals \(0\) and is not attained, hence
\((0,-1)\in\dom h\setminus\dom\partial h\).
\end{example}

The class \(\mathcal{H}\) is closed under the operations below.
\begin{proposition}[Operations preserving sublinearity]\label{prop:preserve}
The following operations preserve sublinearity. 
\begin{enumerate}[label=(\alph*)]
\item \(h(x)=t_1\sigma_{S_1}(x)+t_2\sigma_{S_2}(x)=\sigma_S(x)\) where \(t_i> 0,\ i=1,2\), \(S_i\) is nonempty convex and 
\(S\mathrel{\mathop:}=t_1S_1+t_2S_2\).\label{it:321}
\item{Separability}
Let \(f(x)=\sum_{i=1}^mf_i(x_i)\) for proper closed sublinear functions \(f_i\colon\mathbb{R}^{n_i}\rightarrow \overbar{\mathbb{R}}\) where \(x\in  \mathbb{R}^n\) is expressed as \(x_1,\cdots,x_m\) with \(x_i\in\mathbb{R}^{n_i}\). 
Then \(f(x)\in \mathcal{H}\).  
\end{enumerate}
\end{proposition}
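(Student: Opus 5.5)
For (a), I will first check directly that \(h=t_1\sigma_{S_1}+t_2\sigma_{S_2}\) is sublinear, and then identify it with \(\sigma_S\). Each \(\sigma_{S_i}\) is a supremum of linear functions, so it is convex and positively homogeneous of degree \(1\). Multiplying by \(t_i>0\) and adding preserves both properties. Since each \(S_i\) is nonempty, \(\sigma_{S_i}>-\infty\) everywhere, so the sum is never of the form \(\infty-\infty\). It is also proper, because \(\sigma_{S_i}(0)=0\). By Definition~\ref{def:sublinear} and \cite[Theorem~4.7]{Rockafellar1970}, \(h\) is sublinear.

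Next I compute, for each \(x\),
\[
\sigma_{t_1S_1+t_2S_2}(x)=\sup_{s_1\in S_1,\,s_2\in S_2}\bigl(t_1\langle s_1,x\rangle+t_2\langle s_2,x\rangle\bigr).
\]
The supremum decouples over the independent variables \(s_1,s_2\), so it equals \(t_1\sigma_{S_1}(x)+t_2\sigma_{S_2}(x)\). This holds in \(\overbar{\mathbb{R}}\), since neither term is \(-\infty\). Closedness follows because each \(\sigma_{S_i}\) is lsc, and a sum of lsc functions bounded below by \(-\infty\) is lsc. Alternatively, \(\sigma_S\) is lsc as a supremum of continuous functions. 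Hence \(h=\sigma_S\in\mathcal{H}\).

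For (b), I will verify the two axioms of Definition~\ref{def:sublinear} coordinatewise. Positive homogeneity holds because \(f(tx)=\sum f_i(tx_i)=t\sum f_i(x_i)\) for \(t>0\). Subadditivity holds because \(f(x+y)=\sum f_i(x_i+y_i)\le\sum\bigl(f_i(x_i)+f_i(y_i)\bigr)\), with the usual convention \(a+\infty=\infty\). No \(-\infty\) values occur, since proper functions never take \(-\infty\). For properness, \(f(0)=\sum f_i(0)=0\) by Proposition~\ref{prop:sub}\ref{it1:1}, so \(f\) is finite somewhere, and \(f>-\infty\) everywhere. For closedness, \(f\) is a sum of the lsc functions \(x\mapsto f_i(x_i)\), each composed with a continuous projection, so it is lsc. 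Alternatively, one can use \(f=\sigma_{D_1\times\cdots\times D_m}\) with \(D_i=\partial f_i(0)\), obtained from Proposition~\ref{prop:sub}\ref{it1:2} by the same decoupling of the supremum as in (a).

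The only delicate point is the extended-value arithmetic. The step that needs care is ruling out \(\infty-\infty\) and preserving lower semicontinuity of a sum. Both are settled by the fact that every term is bounded below by \(-\infty\), being a support function of a nonempty set or a proper function.
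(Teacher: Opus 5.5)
Your proof is correct and follows the paper's route. For (b) the paper likewise checks the sublinearity inequality coordinatewise and identifies \(f\) with the support function of the product of the sets \(\partial f_i(0)\); for (a) it only cites \cite[Corollary~11.24]{Rockafellar1998}, whose content is the decoupling of the supremum over \(t_1S_1+t_2S_2\) that you carry out directly. Your checks on properness, on excluding \(\infty-\infty\), and on lower semicontinuity of the sums supply details the paper leaves implicit (where you write ``bounded below by \(-\infty\)'' you mean ``nowhere equal to \(-\infty\)'').
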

\begin{proof}
Item (a) is \cite[Corollary 11.24]{Rockafellar1998}. For (b), \(f(t_1x+t_2y)=\sum f_i(t_1x_i+t_2y_i)\leq t_1f(x)+t_2f(y)\), and \(f\) is the support function of \(S_1\times\cdots\times S_m\).
\end{proof}

\begin{lemma}\label{lem:2}
Let \(h\in\mathcal{H}\) and \(x\in\mathbb{R}^n\). 
If \(d\) is such that 
\begin{equation}\label{persist}
\partial h(x)\cap \partial h(x+d) \not=\emptyset
\end{equation}
then
\[h(x+d)=h(x)+h'(x;d)\,.\]
\end{lemma}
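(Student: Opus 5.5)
The plan is to sandwich \(h(x+d)\) between two expressions that both equal \(h(x)+h'(x;d)\), using the face description of the subdifferential in Proposition~\ref{prop:sub}\ref{it:1-3} together with the max formula for the directional derivative. Pick \(s\in\partial h(x)\cap\partial h(x+d)\), which exists by \eqref{persist}. Since \(s\in\partial h(x)\) and \(s\in\partial h(x+d)\), both \(x\) and \(x+d\) lie in \(\dom\partial h\subset\dom h\), so every value below is finite. By Proposition~\ref{prop:sub}\ref{it:1-3} we have \(s\in\partial h(0)\), \(h(x)=\langle s,x\rangle\) and \(h(x+d)=\langle s,x+d\rangle\). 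Subtracting gives
\[
h(x+d)-h(x)=\langle s,d\rangle .
\]

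The first inequality comes from convexity alone. Because \(s\in\partial h(x)\), the difference quotient gives \(h'(x;d)\ge\langle s,d\rangle\). Hence \(h(x)+h'(x;d)\ge h(x)+\langle s,d\rangle=h(x+d)\).

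The second inequality comes from the monotonicity of the difference quotient of a convex function. The quotient \(t\mapsto (h(x+td)-h(x))/t\) is nondecreasing in \(t>0\), and \(h'(x;d)\) is its infimum as \(t\searrow0\). Taking \(t=1\) gives \(h'(x;d)\le h(x+d)-h(x)\), which rearranges to \(h(x)+h'(x;d)\le h(x+d)\). This step is valid because \(h(x)\) is finite and \(h\) is convex. Combining the two inequalities yields the claimed equality.

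The only point needing care is finiteness, which I expect to be the main, though minor, obstacle: the difference-quotient monotonicity argument requires \(h(x)\) and \(h(x+d)\) to be finite. Both are finite because they lie in \(\dom\partial h\), by Proposition~\ref{prop:domsub}(i). Once finiteness is secured, no qualification on \(\dom h\) is needed.
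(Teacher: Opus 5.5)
Your proof is correct and follows essentially the paper's argument. A common subgradient \(s\) gives \(h(x)+h'(x;d)\ge\langle s,x+d\rangle=h(x+d)\) through the face description of Proposition~\ref{prop:sub}\ref{it:1-3}, convexity gives the reverse inequality, and your direct difference-quotient argument for the latter avoids the formula \(h'(x;d)=\sup_{s\in\partial h(x)}\langle s,d\rangle\) that the paper cites, which in general needs care at boundary points of \(\dom h\).
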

\begin{proof}
The inequality $h(x+d)\geq h(x)+h'(x;d)$ holds by convexity of $h$, using that $h'(x;d)=\displaystyle\sup_{s\in\partial h(x)}\left<s,d\right>$.
To show the reverse inequality notice that, for any $s\in\partial h(x)$,
\begin{eqnarray*}
h(x)&\geq&\left<s,x\right>\\
h'(x;d)&\geq&\left<s,d\right>\,.
\end{eqnarray*}

Adding both lines and taking \(s'\in \partial h(x)\cap \partial h(x+d) \) we see that $h(x)+h'(x;d)\geq \left<s',x+d\right>=h(x+d)$ for all such $s'$. 
\end{proof}

\subsection{Examples}
The examples below record the sublinear functions and the composites with sublinear outer function used later.
\begin{example}[Sublinear functions as support functions]\label{eg:subli}
From Proposition \ref{prop:sub}\ref{it1:2} we see that if \(h\in\mathcal{H}\) then it can be written as the support function of \(\partial h(0)\). 
A list of functions with such form follows.
\begin{enumerate}[label=\textnormal{(\roman*)}]
\item \(h(x)=\max_{1\leq i\leq n}x_i=\sigma_{\Delta ^n}(x)\). It is known that
\begin{gather*}
\partial h(x)=F_{\Delta_n}(x)\\
=\mcb{\alpha\in\Delta_n\colon\mang{\alpha,x}\geq h(x)}\\
=\mcb{\alpha\in\Delta_n\colon \alpha_i=0\text{ if }x_i<h(x)}\,.
\end{gather*}
\item \(p\)-norms \(h(x)=\normt{x}_p=\sigma_{\partial h(0)}(x)\) with
\[
\partial h(0)=\mcb{y\in \mathbb{R}^n\colon \normt{y}_q\leq 1, \ \tfrac{1}{p}+\tfrac{1}{q}=1,\ p\geq 1}\,.
\]
\item \(\ell_{1,2}\)-norm. \( h(x)=\normt{x}_{1,2}=\sum_{i\in I}\normt{x_i}\), 
\(x_i\in \mathbb{R}^{n_i},\partial h(0)=\Pi_{i=1}^m\mathbb{B}^{n_i}\), where \(\Pi_i\mathbb{R}^{n_i}=\mathbb{R}^n\).
\item \(h(x)=\delta_K(x)=\sigma_{K^\circ}(x)\), where \(K\) is a nonempty closed convex cone. Cone indicators, gauges \(\gamma_C\), and \(\sqrt{f}\) for nonnegative degree-2 homogeneous \(f\) are also support functions \cite[Chapter~V]{HiriartUrruty1993a}; gauges are treated in \cite{Vaiter2015Model}.
\item The largest eigenvalue \(\lambda_1\) on \(\mathcal{S}_n\) (notation list) is the support function of the spectraplex \(\mathcal{C}_n\), so \(\partial\lambda_1(0)=\mathcal{C}_n\).
\label{eg:largestEigen}
It is known that
\[
\partial \lambda_1(X)=\co \mcb{qq\top\colon\normt{q}=1,\, q\in E_1(X)}
\]
where \(E_1(X)\) is the eigenspace associated with \(\lambda_1(X)\). 
\item Nuclear norm.
The map \(A\mapsto\|A\|_*\) is finite-valued and sublinear on \(\mathbb{R}^{n\times m}\),
with \(\partial\|\cdot\|_*(0)\) equal to the spectral-norm unit ball, which has nonempty interior. It is partly smooth relative to the constant-rank manifolds \cite[Table~1]{doi:10.1137/16M106340X}.
\end{enumerate}
\end{example}
\begin{example}[\(h(x)=\abs{x}\)]\label{eg:1}
The absolute value function \(h\) has \(\partial h(0)=[-1,1]\) and applying Proposition \ref{prop:sub}\ref{it1:2} we see 
\(h(x)=\sigma_{[-1,1]}(x)\). 
The \(\varepsilon\)-subdifferential is 
\begin{equation}\label{eq:eps-sub}
\partial_{\varepsilon}h(x) =\begin{cases}
[-1,-1-\varepsilon/x] &\text{ if }x<-\varepsilon/2,\\
[-1,1] &\text{ if }-\varepsilon/2\leq x\leq \varepsilon/2,\\
[1-\varepsilon/x,1] &\text{ if }x>\varepsilon/2.
\end{cases}
\end{equation}
\end{example}

\begin{remark}[the simultaneous limit fails]\label{rem:simul}
The \(\bar\varepsilon\) in Proposition~\ref{prop:sub}\ref{it:1-7} cannot be taken as \(0\).
For \(h=\lvert\cdot\rvert\), the formula \eqref{eq:eps-sub} gives \(\partial_{\varepsilon}h(\varepsilon)=[0,1]\) when \(\varepsilon>0\).
The inner-limit test of \cite[Theorem~4.10(a)]{Rockafellar1998} would require \([-1,1]\cap\rho\mathbb{B}\subset[0,1]+\eta\mathbb{B}\) for every \(\rho,\eta>0\) and all small \(\varepsilon\).
Taking \(\rho=1\) and \(\eta=\tfrac12\) puts \(-1\) outside \([-\tfrac12,\tfrac32]\).
Hence \(\liminf_{x\to 0,\,\varepsilon\searrow 0}\partial_\varepsilon h(x)\neq\partial h(0)\).
\end{remark}

\begin{example}[group lasso]\label{eg:LW}
Let \(g_i:\mathbb{R}^n\to\mathbb{R}^{m_i}\) be of class \(C^1\), \(i=1,\ldots,t\), and set
\[
F(x)=(g_1(x),\ldots,g_t(x)),\qquad
h(y_1,\ldots,y_t)=\sum_{i=1}^t|y_i|.
\]
Then \(h\in\mathcal{H}\) is the \(\ell_{1,2}\) norm of Example~\ref{eg:subli}: each \(|y_i|\) is the Euclidean norm of one group, and \(\partial h(0)\) is the product of the corresponding unit balls.
The composite \(h\circ F\) penalises groups of coefficients together, which is the group-lasso regulariser.
It is not polyhedral.
\end{example}
\subsection{Minimizers of functions in \texorpdfstring{\(\mathcal{H}\)}{H}}
For  \(h\in\mathcal{H}\), let 
\begin{equation}
\label{eq:argminH}
W^h\mathrel{\mathop:}=\argmin_{x\in\mathbb{R}^n} h(x).\end{equation} 
\begin{proposition}\label{prop:end}
If \(W^h\not=\emptyset \) then \(0\in W^h\) and \(\min h=0\). 

\(
W^h\not=\emptyset \Leftrightarrow h(x)\geq 0, \ \fa x\in\mathbb{R}^n.
\) 
\end{proposition}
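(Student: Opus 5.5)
The plan is to use only positive homogeneity together with Proposition~\ref{prop:sub}\ref{it1:1} and~\ref{it:10}. The first assertion comes first, and the equivalence is then read off from it.

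\emph{Step 1: if \(W^h\neq\emptyset\), then \(0\in W^h\) and \(\min h=0\).} Fix \(\bar x\in W^h\) and put \(m:=h(\bar x)\). Since \(h\) is proper, \(m<+\infty\). Since \(h(0)=0\) by Proposition~\ref{prop:sub}\ref{it1:1}, we also have \(m\le 0\). Suppose \(m<0\). Positive homogeneity gives \(h(t\bar x)=tm\) for every \(t>0\), and \(t=2\) yields \(h(2\bar x)=2m<m\). This contradicts the minimality of \(\bar x\), so \(m=0\). Then \(h(0)=0=\min h\), hence \(0\in W^h\). The same scaling rules out \(m=-\infty\), and properness already does.

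\emph{Step 2: the equivalence.} Suppose \(W^h\neq\emptyset\). Step~1 gives \(\min h=0\), so \(h(x)\ge 0\) for all \(x\). Conversely, suppose \(h\ge 0\) everywhere. Then \(h(0)=0\) is a lower bound attained at the origin, so \(0\in W^h\neq\emptyset\). This is exactly Proposition~\ref{prop:sub}\ref{it:10} read through the definition \eqref{eq:argminH}, and it may be cited instead.

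There is no real obstacle. The only point needing care is to exclude a negative or \(-\infty\) minimum value, which the scaling argument in Step~1 settles using properness. Convexity and closedness of \(h\) are not needed beyond \(h(0)=0\).
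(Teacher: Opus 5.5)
Your proof is correct, but it takes a different route from the paper.

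\emph{The paper's route.} It works through the subdifferential. A minimizer \(x^*\) satisfies \(0\in\partial h(x^*)\) by Fermat's rule. The face description \(\partial h(x^*)=\{s\in\partial h(0):\langle s,x^*\rangle=h(x^*)\}\) of Proposition~\ref{prop:sub}\ref{it:1-3} then gives two things at once: \(0\in\partial h(0)\), hence \(0\in W^h\), and \(h(x^*)=\langle 0,x^*\rangle=0\). The subgradient inequality for the zero subgradient then gives \(h\ge 0\).

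\emph{Your route.} You rule out a negative minimum value directly by doubling, \(h(2\bar x)=2h(\bar x)\). This uses only properness, positive homogeneity and \(h(0)=0\). It is shorter and needs neither Fermat's rule nor the face formula. It also shows the statement holds for any proper positively homogeneous function with \(h(0)=0\), convex or not; as you say, closedness enters only through \(h(0)=0\).

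\emph{What each buys.} The paper's argument stays inside the subdifferential picture of Proposition~\ref{prop:sub}. It shows directly that \(W^h\neq\emptyset\) forces \(0\in\partial h(0)\), that is, \(0\) lies in the set \(\partial h(0)\) that \(h\) supports. This is the form used in Proposition~\ref{prop:UinWh}. In your route the same fact follows afterwards from \(0\in W^h\) by Fermat's rule, so nothing is lost.

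\emph{One small correction.} Doubling does not exclude \(m=-\infty\), since \(2\cdot(-\infty)=-\infty\) is not a strict decrease. What excludes it is properness, or alternatively lower semicontinuity at the origin together with \(h(0)=0\). Since you also invoke properness, the proof is unaffected.
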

\begin{proof}
If \(W^h\not=\emptyset \) then there must be a point \(x^*\in \mathbb{R}^n\) such that \(0\in \partial h(x^*)\). 
From the expression of \(\partial h\) in Proposition \ref{prop:sub}\ref{it:1-3} we get \(0\in\partial h(0)\) and thus \(0\in W^h\). And from Proposition \ref{prop:sub}\ref{it1:1} we have \(\min h=0\).

The assertion that \(W^h\not=\emptyset\) under the nonnegativity of \(h\) can be obtained from Proposition \ref{prop:sub}\ref{it:10}. 
Suppose \(W^h\not=\emptyset\). Then from the convexity of \(h\) there exists \(x^*\in\mathbb{R}^n\) such that \(0\in\partial h(x^*)=\mcb{s\in \partial h(0)\colon\mang{s,x^*}=h(x^*)}\), and therefore \(0\in\partial h(0)\) and \(0=h(x^*)\). 
The convexity of \(h\) implies \(h(x)\geq h(x^*)+\mang{0,x-x^*}=0\), for any \(x\in\mathbb{R}^n\).  
\end{proof}
The optimal solution set is not necessarily a singleton. 
Consider the following example. 
\begin{example}\label{pos:semi}
{\(h(x)=\sqrt{\mang{Qx,x}}\)}
where \(Q\) is a symmetric positive semi-definite operator. 
By \cite[Chapter V, Example 1.2.3 and Definition 1.2.4]{HiriartUrruty1993a} \(h(x)=\gamma_{{E_Q}}(x)\) where \\\(E_Q\mathrel{\mathop:}=\mcb{x\in \mathbb{R}^n\colon \mang{Qx,x}\leq 1}\). 
By \cite[Chapter V, Proposition 3.2.4]{HiriartUrruty1993a} \(\gamma_{{E_Q}}(x)=   \sigma_{{E_Q}^\circ}(x)\). 
The set of optimal solutions is \(W^h=\mcb{y\colon y^\top Qy=0}\).
\end{example}
The minimizer set is a convex cone.
\begin{corollary}[cone property of the minimizer set]\label{cor:Wh-cone}
If \(x^*\in W^h\), then 
\begin{gather*}
cx^*\in W^h, \text{ if }c\geq 0 \\
x^*+y^*\in W^h, \text{ if } y^*\in W^h
\end{gather*}
\end{corollary}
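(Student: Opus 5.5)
The plan is to use Proposition~\ref{prop:end}: whenever \(W^h\neq\emptyset\) we have \(\min h=0\). This rewrites membership as \(W^h=\mcb{x\colon h(x)=0}\), which, since \(h\ge 0\) on \(\mathbb{R}^n\), is the same as \(W^h=\mcb{x\colon h(x)\le 0}\). Both closure properties then reduce to the two axioms of Definition~\ref{def:sublinear}. Throughout we fix \(x^*\in W^h\), so in particular \(W^h\neq\emptyset\) and \(h(x)\ge 0\) for all \(x\).

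\emph{Scaling.} For \(c>0\), positive homogeneity gives \(h(cx^*)=c\,h(x^*)=0=\min h\), so \(cx^*\in W^h\). The case \(c=0\) is not covered by the homogeneity axiom, which is stated only for \(t>0\). Here I would invoke Proposition~\ref{prop:sub}\ref{it1:1}, \(h(0)=0\), or equivalently the first claim of Proposition~\ref{prop:end}, \(0\in W^h\). This is the only point needing care, and it is handled by an earlier result rather than by the axioms directly.

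\emph{Addition.} For \(y^*\in W^h\), subadditivity gives
\[
0=\min h\le h(x^*+y^*)\le h(x^*)+h(y^*)=0,
\]
so \(h(x^*+y^*)=0=\min h\) and \(x^*+y^*\in W^h\). No obstacle is expected. The only subtlety is that \(h\) may take the value \(+\infty\), but \(h(x^*)\) and \(h(y^*)\) are both finite (equal to \(0\)), so the right-hand side is well defined.

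Together the two steps show that \(W^h\) is a convex cone: it contains \(0\) and is closed under nonnegative scaling and addition. If desired, closedness also follows, since \(h\) is lsc and \(W^h\) is its sublevel set \(\mcb{x\colon h(x)\le 0}\).
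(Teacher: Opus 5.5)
Your proof is correct and follows the paper's argument: Proposition~\ref{prop:end} gives \(\min h=0\), after which positive homogeneity yields \(h(cx^*)=0\) and subadditivity yields \(h(x^*+y^*)\le 0\). Your separate treatment of \(c=0\) via \(h(0)=0\) is a small refinement over the paper's proof, which applies homogeneity for all \(c\ge 0\) even though the axiom is stated only for \(t>0\).
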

\begin{proof}
Combining Proposition \ref{prop:sub}\ref{it:10} and Proposition \ref{prop:end} we see that if the problem \(\min_{x\in\mathbb{R}^n} h(x)\) has optimal solutions then 0 must be one and \(0=\min_{x\in\mathbb{R}^n} h(x)\). From the positive homogeneity and subadditivity of \(h\) we get \(h(cx^*)=ch(x^*)=0\) and \(h(x^*+y^*)\leq h(x^*)+h(y^*)=0\).
\end{proof}

\subsection{Polyhedral functions in \texorpdfstring{\(\mathcal{H}\)}{H}}
Polyhedral functions are functions with polyhedral epigraphs. They are equivalent to convex piecewise linear functions\cite[Exercise 3.54]{Rockafellar1998}.
The exact \(\mathcal{VU}\) continuity for polyhedral enlargements is
\cite{Liu2019Subdifferential}.
What is recorded below is a specialization of that account to the subclass
\(\mathcal{H}\), together with the composite transfer
\(f=h\circ F\) when the outer function is polyhedral and sublinear.
\begin{proposition}\label{prop: polyheH}
For \(h\in\mathcal{H}\), \(h\) is polyhedral if and only if there exist \(a^i\in \mathbb{R}^n\), \(i\in I\) and \(c^j\in \mathbb{R}^n\), \(j
\in J\)  and 
\begin{equation}\label{eq:reprPolyh}
h(x)=\begin{cases}
\max_{i\in I}\mang{a^i,x} & \text{if }\mang{c^j,x}\leq 0,\ j\in J\\
+\infty & \text{otherwise}
\end{cases}
%
%
\end{equation}
where \(I\) and \(J\) are finite index sets such that \(I\cup J\not=\emptyset\), and the maximum over an empty index set is taken to be \(0\). Thus \(I=\emptyset\) recovers the indicator of a polyhedral cone. Let the ``active" index sets be defined as \(I(x)=\mcb{i\in I\colon \mang{a^i,x}=h(x)}\) and \(J(x)=\mcb{j\in J\colon\mang{c^j,x}=0}\). 
Then  we have
\begin{equation}\label{eq:polyex}
\partial h(x)=\operatorname{co}\{a^i:i\in I(x)\}+\operatorname{pos}\{c^j:j\in J(x)\},
\end{equation}
with the convention \(\operatorname{co}\emptyset=\{0\}\). Equivalently, if \(I(x)\neq\emptyset\),
\[
\partial h(x)=\mcb{\sum_{i\in I(x)}\alpha_ia^i+\sum_{j\in J(x)}\beta_jc^j\colon\sum_{i\in I(x)}\alpha_i=1,\ \alpha_i\geq 0,\ \beta_j\geq0},
\]
while if \(I(x)=\emptyset\) one has \(\partial h(x)=N_C(x)=\operatorname{pos}\{c^j:j\in J(x)\}\).
\end{proposition}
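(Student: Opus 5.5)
The proof splits into the representation \eqref{eq:reprPolyh} and the subdifferential formula \eqref{eq:polyex}.

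\emph{Representation.} Suppose \(h\in\mathcal{H}\) is polyhedral. By Proposition~\ref{prop:sub}\ref{it1:2} and \ref{it:3111}, \(h=\sigma_D\) with \(D:=\partial h(0)\) a nonempty polyhedron. By Minkowski--Weyl, \(D=\operatorname{co}\{a^i:i\in I\}+\operatorname{pos}\{c^j:j\in J\}\) with finite \(I,J\) and \(I\neq\emptyset\), since \(D\) is nonempty. For this \(D\) we have
\[
\sigma_D(x)=\max_i\langle a^i,x\rangle+\sigma_{\operatorname{pos}\{c^j\}}(x),
\]
and the last term is \(0\) when all \(\langle c^j,x\rangle\le 0\) and \(+\infty\) otherwise. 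This gives \eqref{eq:reprPolyh}.

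Conversely, a function of the form \eqref{eq:reprPolyh} is a finite max of linear forms plus the indicator of a polyhedral cone. Its epigraph is therefore a polyhedral cone, so the function is polyhedral and positively homogeneous. It is proper because \(h(0)=0\), hence it lies in \(\mathcal{H}\). The case \(I=\emptyset\) follows from the convention \(\max_\emptyset=0\): then \(h=\delta_C\) with \(C=\{x:\langle c^j,x\rangle\le0\}\).

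\emph{Subdifferential.} Take \(x\in\dom h\) and write \(h=g+\delta_C\), where \(g=\max_{i\in I}\langle a^i,\cdot\rangle\) (or \(g\equiv0\) when \(I=\emptyset\)). Both pieces are polyhedral and \(g\) is finite everywhere. The polyhedral sum rule \cite[Theorem~23.8]{Rockafellar1970} needs no interior qualification, so \(\partial h(x)=\partial g(x)+N_C(x)\). Two standard facts finish the formula.
\begin{itemize}
\item The max rule gives \(\partial g(x)=\operatorname{co}\{a^i:i\in I(x)\}\), and \(\partial g(x)=\{0\}\) when \(I=\emptyset\).
\item The normal cone of a polyhedral cone is generated by its active constraints: \(N_C(x)=\operatorname{pos}\{c^j:j\in J(x)\}\), again by Farkas/\cite[Theorem~23.8]{Rockafellar1970}.
\end{itemize}

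The active set \(I(x)\) is defined through \(h(x)\) rather than \(g(x)\). On \(\dom h=C\) the two coincide, so this causes no problem. Outside \(\dom h\) both sides are empty: \(\partial h(x)=\emptyset\), and the stated formula is only meant on the domain, where \(J(x)\) makes sense.

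The convex-combination form of \eqref{eq:polyex} is just \eqref{eq:polyex} written out. When \(I(x)=\emptyset\) (which happens only if \(I=\emptyset\)), the convention \(\operatorname{co}\emptyset=\{0\}\) leaves \(\partial h(x)=N_C(x)\).

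The step most likely to need care is reconciling \(I(x)\) with the conventions. As an alternative check, Proposition~\ref{prop:sub}\ref{it:1-3} identifies \(\partial h(x)\) with the face \(F_D(x)\), and the face of \(\operatorname{co}\{a^i\}+\operatorname{pos}\{c^j\}\) exposed by \(x\) is exactly \(\operatorname{co}\{a^i:\langle a^i,x\rangle\text{ maximal}\}+\operatorname{pos}\{c^j:\langle c^j,x\rangle=0\}\). This gives \eqref{eq:polyex} directly.
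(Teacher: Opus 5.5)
Your proof is correct and follows the paper's route. You get polyhedrality of \(\partial h(0)\) from Proposition~\ref{prop:sub}, apply Minkowski--Weyl, split \(\sigma_{\partial h(0)}\) into a finite max plus the indicator of the polar cone, and justify the sum rule by the max piece being finite-valued; your explicit epigraph argument for the converse and your exposed-face check are harmless extras.

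One side remark needs correcting. Off \(\dom h\) the right-hand side of \eqref{eq:polyex} is not empty: there \(I(x)=\emptyset\), so it equals \(\{0\}+\pos\{c^j:j\in J(x)\}\ni 0\). The formula is therefore genuinely false outside the domain and must be read on \(\dom h\), as the paper's proof also tacitly does.
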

\begin{proof}
All functions in \(h\in\mathcal{H}\) are convex. 
Therefore by Proposition \ref{prop:sub}\ref{it1:2} and \ref{it:3111} we have 
 \(h\in\mathcal{H}\) is polyhedral if and only if \( \partial h(0)\) is polyhedral. 
 By \cite[Theorem~19.1]{Rockafellar1970}, \( \partial h(0)\) is polyhedral if and only if it can be represented in the form 
 \( \partial h(0)=\co\mcb{a^i\colon i\in I}+\pos\mcb{c^j\colon j\in J}\). 
 Since \(\partial h(0)\not=\emptyset\) it follows that \(I\cup J\not=\emptyset\).
 If \(I=\emptyset\) then \(\partial h(0)\) is a polyhedral cone, \(h=\delta_{(\partial h(0))^\circ}\), and \eqref{eq:reprPolyh} holds with the empty-maximum convention. 
 If \(I\neq\emptyset\), define \(S_1=\co\mcb{a^i\colon i\in I}\) and \(S_2=\pos\mcb{c^j\colon j\in J}\). 
 In view of Proposition \ref{prop:preserve}\ref{it:321}, we get 
 \(h(x)=\sigma_{\partial h(0)}(x)=\sigma_{S_1}(x)+\sigma_{S_2}(x)\). 
 From \cite[Ch. V, Example 3.4.1]{HiriartUrruty1993a} we have \(\sigma_{S_1}(x)=\max_{i\in I}\mang{a^i,x}\). 
 As \(S_2\) is a polyhedral cone  we have \(\sigma_{{S_2}}(x)=\delta_{{S_2}^\circ}(x)\) by \cite[Ch. V, Example 2.3.1]{HiriartUrruty1993a}, and \({S_2}^\circ=\mcb{y\in\mathbb{R}^n\colon\mang{c^j,y}\leq 0,j\in J}\) by \cite[Lemma 6.45]{Rockafellar1998}. 
 Consequently, \(h(x)=\max_{i\in I}\mang{a^i,x}+\delta_{{S_2}^\circ}(x)\) and the conclusion holds. 
If \(I=\emptyset\) then \(h=\delta_C\) and \eqref{eq:polyex} is the usual formula for \(N_C\). If \(I\neq\emptyset\), define \(h_1\mathrel{\mathop:}=\max_{i\in I}\mang{a^i,x}\) and clearly \(\partial h_1(x)=\mcb{\sum_{i\in I(x)}\alpha_ia^i \colon\sum_{i\in I(x)}\alpha_i=1,\ \alpha_i\geq 0, {i\in I(x)}} 
\). Define \(S\mathrel{\mathop:}=\mcb{x\in \mathbb{R}^n\colon \mang{c^j,x}\leq 0,\ j\in J}\) and \(h_2\mathrel{\mathop:}=\delta_S(x)\). Then \(\partial h_2(x)=N_S(x)=\mcb{
\sum_{j\in J(x)}\beta_jc^j\colon \beta_j\geq0,{j\in J(x)}}\)\,. As the function \(h_1\) is finite-valued, hence locally Lipschitz, we have \(\partial h(x)=\partial h_1(x)+\partial h_2(x)\) which is just \eqref{eq:polyex}\,.
\end{proof}
The representation \eqref{eq:reprPolyh} and the active sets \(I(x)\), \(J(x)\) are the standing notation below; they are not redefined.
From \eqref{eq:sub-con0} and \eqref{eq:scin0} the one-sided limits of Proposition~\ref{prop:3} are stated on the lineality space. For polyhedral \(h\in\mathcal{H}\) they hold at every point of \(\dom h\).
\begin{proposition}\label{prop:polyheiHp}
Let \(h\in\mathcal{H}\) be polyhedral and \(\bar x\in\dom h\). Then there exists a neighborhood \(B(\bar{x},\epsilon)\) such that for all \(x\in B(\bar{x},\epsilon)\cap\dom h\),
\begin{enumerate}[label=\textnormal{(\roman*)}]
\item 
\(\partial h(x)\subset\partial h(\bar{x})\)  
\item If \(x\) and \(x+u\) both lie in \(B(\bar x,\epsilon)\cap\dom h\) and \(I(x+u)=I(x)\), \(J(x+u)=J(x)\), then \(\partial h(x)=\partial h(x+u)\neq\emptyset\) and
\(h(x+u)=h(x)+\mang{g_0,u}\) for every \(g_0\in\partial h(x)\).
\end{enumerate}
\end{proposition}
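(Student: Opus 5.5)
The argument works entirely with the representation \eqref{eq:reprPolyh} and the face formula \eqref{eq:polyex}. The idea is that the active sets can only shrink near \(\bar x\), and that freezing them freezes the subdifferential.

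\textbf{Choice of radius.} Every inactive constraint at \(\bar x\) is strict: \(\mang{a^i,\bar x}<h(\bar x)\) for \(i\in I\setminus I(\bar x)\), and \(\mang{c^j,\bar x}<0\) for \(j\in J\setminus J(\bar x)\). On \(\dom h\) the function \(h\) equals \(\max_{i\in I}\mang{a^i,\cdot}\) (or \(0\) when \(I=\emptyset\)), which is continuous. The finitely many gaps are therefore preserved on some ball \(B(\bar x,\epsilon)\). That is, for \(x\in B(\bar x,\epsilon)\cap\dom h\) we have
\[
\mang{a^i,x}<h(x)\ \ (i\in I\setminus I(\bar x)),\qquad \mang{c^j,x}<0\ \ (j\in J\setminus J(\bar x)).
\]
This gives \(I(x)\subset I(\bar x)\) and \(J(x)\subset J(\bar x)\) throughout the ball.

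\textbf{Item (i).} By \eqref{eq:polyex}, \(\partial h(x)=\co\{a^i:i\in I(x)\}+\pos\{c^j:j\in J(x)\}\). Both the convex hull and the cone are monotone in their index sets, so \(\partial h(x)\subset\partial h(\bar x)\).

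The one point needing care is \(I=\emptyset\), where the convention \(\co\emptyset=\{0\}\) is used on both sides and the inclusion reduces to the one between normal cones. When \(I\neq\emptyset\), the set \(I(x)\) is nonempty because a finite maximum is attained. Hence \(\partial h(x)\neq\emptyset\) on \(\dom h\), consistent with Proposition~\ref{prop:domsub}(iv).

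\textbf{Item (ii).} Equality of the active sets at \(x\) and \(x+u\) gives \(\partial h(x)=\partial h(x+u)\) directly from \eqref{eq:polyex}. This set is nonempty by the remark above; if \(I=\emptyset\) it contains \(0\). Lemma~\ref{lem:2} with \(d=u\) then applies, giving \(h(x+u)=h(x)+h'(x;u)\).

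It remains to show \(h'(x;u)=\mang{g_0,u}\) for every \(g_0\in\partial h(x)\). I would argue directly rather than via the directional derivative. Take \(g_0=\sum_{i\in I(x)}\alpha_ia^i+\sum_{j\in J(x)}\beta_jc^j\). For each \(j\in J(x)=J(x+u)\) we have \(\mang{c^j,x}=0=\mang{c^j,x+u}\), so \(\mang{c^j,u}=0\). For each \(i\in I(x)=I(x+u)\) we have \(\mang{a^i,x}=h(x)\) and \(\mang{a^i,x+u}=h(x+u)\), so \(\mang{a^i,u}=h(x+u)-h(x)\). Since \(\sum\alpha_i=1\),
\[
\mang{g_0,u}=h(x+u)-h(x).
\]
This proves the formula without even invoking Lemma~\ref{lem:2}.

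The case \(I=\emptyset\) needs checking separately: there \(h=0\) on \(\dom h\), and \(\mang{g_0,u}=0\) follows from the \(c^j\) computation alone.

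\textbf{Main obstacle.} The only delicate step is the first one: securing the strict gaps uniformly on the ball. This requires the continuity of \(h\) restricted to \(\dom h\) rather than on all of \(\mathbb{R}^n\), which holds because \(h\) is a finite maximum of linear functions there.
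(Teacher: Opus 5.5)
Your proof is correct and takes essentially the paper's route. You use continuity to show the active sets can only shrink near \(\bar x\), then read off (i) and the equality \(\partial h(x)=\partial h(x+u)\) from \eqref{eq:polyex}; for the affine formula the paper just notes that every \(s\) in the common face exposes both \(x\) and \(x+u\) (Proposition~\ref{prop:sub}\ref{it:1-3}), which your generator-by-generator computation verifies, and your handling of the case \(I=\emptyset\) and of continuity of \(h\) only on \(\dom h\) spells out details the paper leaves implicit.
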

\begin{proof}
From the continuity of the linear terms \(\mang{a^i,x}\) and \(\mang{c^j,x}\) in the definition of \(h\), we see that for any \(\bar{x}\), there exists a neighborhood \(\mathcal{N}\) of \(\bar{x}\) such that \(I(x)\subset I(\bar{x})\) and \(J(x)\subset J(\bar{x})\) for all \(x\in \mathcal{N}\). 
If \(I(x+u)=I(x)\) and \(J(x+u)=J(x)\), the representation \eqref{eq:polyex} gives \(\partial h(x)=\partial h(x+u)\). Every \(s\in\partial h(x)\) then exposes both \(x\) and \(x+u\), so \(h(x+u)-h(x)=\mang{s,u}\). The right-hand side is independent of the choice of \(s\) in the face, and equals \(\mang{g_0,u}\) for any \(g_0\in\partial h(x)\).
\end{proof}
The condition of Lemma~\ref{lem:2} holds for polyhedral functions in \(\mathcal{H}\) at every \(x\in\dom h\) at which the active sets \(I(\cdot)\) and \(J(\cdot)\) are locally constant, by Proposition~\ref{prop:polyheiHp}.
\section{\texorpdfstring{\(\mathcal{VU} \) }{VU} decomposition for class \texorpdfstring{\(\mathcal{H}\)}{H}}
This section records the \(\mathcal{VU}\)-structure of \(h\in\mathcal{H}\) and its transfer to \(f=h\circ F\). Throughout, \(h\) is proper, closed and sublinear, and \(F\) is of class \(C^1\). Nonconvex outer functions and merely Lipschitz inner mappings are left aside.
\subsection{Properties under \texorpdfstring{\(\mathcal{VU}\)}{VU}-decomposition}
For \(h\in\mathcal{H}\) the point lies in its own \(\mathcal{U}\)-space, and the subspaces are invariant along the lineality space.
\begin{proposition}\label{prop:xinU}
Let \(h\in\mathcal{H}\) and \(x\in\operatorname{dom}\partial h\). Then \(x\in\mathcal{U}h(x)\).
\end{proposition}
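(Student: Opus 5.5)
The plan is to show directly that \(x\) is orthogonal to \(\mathcal{V}h(x)\), using the face description of the subdifferential. This gives \(x\in\mathcal{V}h(x)^\perp=\mathcal{U}h(x)\) by Definition~\ref{def:vu}.

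Fix \(x\in\dom\partial h\), so that \(\partial h(x)\neq\emptyset\). By Proposition~\ref{prop:sub}\ref{it:1-3},
\[
\partial h(x)=\{s\in\partial h(0)\colon \langle s,x\rangle=h(x)\}.
\]
Hence every subgradient \(s\in\partial h(x)\) satisfies \(\langle s,x\rangle=h(x)\). The value \(h(x)\) is finite, since \(\dom\partial h\subset\dom h\) by Proposition~\ref{prop:domsub}. Choose any \(g\in\partial h(x)\). For every \(s\in\partial h(x)\) we then get
\[
\langle s-g,x\rangle=h(x)-h(x)=0.
\]
So \(x\) is orthogonal to the generating set \(\partial h(x)-g\), and therefore to its span \(\mathcal{V}h(x)\). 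Definition~\ref{def:vu} then gives \(x\in\mathcal{V}h(x)^\perp=\mathcal{U}h(x)\).

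As a cross-check one could use the characterization \eqref{Uxdefalter}, since convex functions are subdifferentially regular. By positive homogeneity,
\[
h'(x;x)=\lim_{t\searrow0}\frac{h((1+t)x)-h(x)}{t}=h(x),
\qquad
h'(x;-x)=\lim_{t\searrow0}\frac{h((1-t)x)-h(x)}{t}=-h(x),
\]
so \(h'(x;-x)=-h'(x;x)\) and \(x\in\mathcal{U}h(x)\) again.

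There is no real obstacle. The only point needing care is that \(h(x)\) is finite, which Proposition~\ref{prop:domsub}(i) guarantees. This also ensures that the differences \(\langle s-g,x\rangle\) are well defined and that the directional derivatives are finite along the ray through \(x\), which lies in \(\dom h\) because \(\dom h\) is a cone.
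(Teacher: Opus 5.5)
Your proof is correct and is essentially the paper's argument. The face description of Proposition~\ref{prop:sub}\ref{it:1-3} gives \(\langle s-g,x\rangle=0\) for all \(s,g\in\partial h(x)\), hence \(x\perp\mathcal{V}h(x)\); the directional-derivative cross-check via \eqref{Uxdefalter} is valid but not needed.
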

\begin{proof}
By Proposition~\ref{prop:sub}\ref{it:1-3}, every \(s\in\partial h(x)\) satisfies \(\mang{s,x}=h(x)\). Hence \(\mang{s-g,x}=0\) for all \(s,g\in\partial h(x)\), so \(x\) is orthogonal to \(\mathcal{V}h(x)=\operatorname{span}(\partial h(x)-g)\).
\end{proof}
Sublinearity is essential. The shifted absolute value \(x\mapsto\lvert x-1\rvert\) is convex and polyhedral but not sublinear, and at the kink one has \(\mathcal{U}(\lvert\cdot-1\rvert)(1)=\{0\}\), which does not contain \(1\). For the cone indicator \(h=\delta_{\mathbb{R}_+}\), \(\mathcal{U}h(t)=\mathbb{R}\) when \(t>0\) and \(\mathcal{U}h(0)=\{0\}\).
\begin{proposition}[lineality trivial if and only if the subdifferential has interior]\label{prop:U0int}
For \(h\in\mathcal{H}\), 
\(\mathcal{U} h(0)=\mcb{0} \Leftrightarrow \inte \partial h(0)\not=\emptyset\)
\end{proposition}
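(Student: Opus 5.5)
The plan is to reduce both sides to statements about the closed convex set \(D:=\partial h(0)\). By Definition~\ref{def:vu}, \(\mathcal{V}h(0)=\spn(D-g)\) for any \(g\in D\), which is the subspace parallel to \(\aff D\). By Proposition~\ref{prop:domsub}(i), \(0\in\dom\partial h\), so \(D\neq\emptyset\) and the decomposition at \(0\) is defined. Since \(\mathcal{U}h(0)=\mathcal{V}h(0)^\perp\), we have \(\mathcal{U}h(0)=\{0\}\) if and only if \(\mathcal{V}h(0)=\mathbb{R}^n\), that is, if and only if \(\aff D=\mathbb{R}^n\).

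The equivalence then rests on the standard fact that a nonempty convex set in \(\mathbb{R}^n\) has nonempty interior exactly when its affine hull is all of \(\mathbb{R}^n\). This holds because \(\ri D\neq\emptyset\) for nonempty convex \(D\) \cite[Theorem~6.2]{Rockafellar1970}, and \(\ri D=\inte D\) when \(\aff D=\mathbb{R}^n\).

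For the direction "\(\Leftarrow\)": if \(g\in\inte D\), then \(g+\inte B(0,\eta)\subset D\) for some \(\eta>0\). Hence \(\spn(D-g)\supset\inte B(0,\eta)\), which forces \(\mathcal{V}h(0)=\mathbb{R}^n\) and therefore \(\mathcal{U}h(0)=\{0\}\).

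For the direction "\(\Rightarrow\)": if \(\mathcal{V}h(0)=\mathbb{R}^n\), then \(\aff D=g+\mathbb{R}^n=\mathbb{R}^n\). Take \(g^\circ\in\ri D\), which exists by nonemptiness. By the definition of relative interior recalled after Definition~\ref{def:vu}, \(g^\circ+\inte B(0,\eta)\cap\mathcal{V}h(0)\subset D\), and with \(\mathcal{V}h(0)=\mathbb{R}^n\) this says \(g^\circ\in\inte D\).

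Alternatively, one could use the description \(\mathcal{U}h(0)=\{w: h(-w)=-h(w)\}\) from \eqref{Uxdefalter} and Proposition~\ref{prop:sub}\ref{item:direcDir}, which identifies \(\mathcal{U}h(0)\) with the directions along which \(D\) has constant width. This route is not needed.

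The only step with any substance is the existence of a relative-interior point of \(D\), and it is a classical result. No boundedness of \(D\) is required, so the statement holds even when \(h\) takes infinite values.
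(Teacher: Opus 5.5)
Your argument is correct, and it takes a more direct route than the paper. You work from Definition~\ref{def:vu} itself. There \(\mathcal{V}h(0)=\spn(D-g)\) with \(D=\partial h(0)\) is the direction space of \(\aff D\), so \(\mathcal{U}h(0)=\{0\}\) is literally the statement \(\aff D=\mathbb{R}^n\). The only input you need is that \(\ri D\neq\emptyset\) for a nonempty convex set.

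The paper works on the dual side. It uses the regularity-based characterization \eqref{Uxdefalter} of \(\mathcal{U}h(0)\) through directional derivatives. It then replaces \(h'(0;\cdot)\) by \(h\) via Proposition~\ref{prop:sub}\ref{item:direcDir}, obtaining \(\mathcal{U}h(0)=\{w: h(-w)=-h(w)\}\). Finally it invokes the support-function criterion \cite[Proposition~8.29(a)]{Rockafellar1998}: a convex set \(C\) has nonempty interior if and only if no nonzero \(v\) satisfies \(\sigma_C(v)=-\sigma_C(-v)\).

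Both routes rest on the same geometric fact: \(D\) has interior if and only if it lies in no hyperplane. You see this primally, through the affine hull. The paper sees it dually, as a direction in which \(\langle\cdot,v\rangle\) is constant on \(D\).

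Your version is self-contained and needs neither \eqref{Uxdefalter} nor the external criterion. The paper's version buys the explicit identification of \(\mathcal{U}h(0)\) as the lineality space of \(h\), the directions along which \(h\) is linear. That is the reading the paper relies on later, in Proposition~\ref{prop:2} and Proposition~\ref{pron:3}.

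One small point on your closing aside: \(w\in\mathcal{U}h(0)\) means \(\langle\cdot,w\rangle\) is constant on \(D\). So \(D\) has zero width in direction \(w\), rather than ``constant width''.
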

\begin{proof}
By the alternative definition of the \U space in \eqref{Uxdefalter},
\[
\mathcal{U} h(0)=\mcb{w\in \mathbb{R}^n\colon h'(0;-w)=-h'(0;w)}.
\]
Proposition \ref{prop:sub}\ref{item:direcDir} then gives
\(\mathcal{U} h(0)=\mcb{w\in \mathbb{R}^n\colon h(-w)=-h(w)}\).
As the support function of \(\partial h(0)\), \(h\) satisfies
\cite[Proposition 8.29(a)]{Rockafellar1998}: a convex set \(C\) has nonempty interior if and only if no nonzero \(v\) obeys \(\sigma_C(v)=-\sigma_C(-v)\).
\end{proof}
For the nuclear norm of Example~\ref{eg:subli}, the spectral-norm unit ball has nonempty interior, so Proposition~\ref{prop:U0int} gives \(\mathcal{U}\|\cdot\|_*(0)=\{0\}\).
\begin{proposition}\label{prop:2}
For \(h\in\mathcal{H}\) and \(x\in\mathbb{R}^n\), if \(u\in \mathcal{U}h(0)\), then
\begin{gather}
h(x+u)=h(x)+\mang{g_0,u	}, \ \fa g_0\in\partial h(0),\label{eq:hxu} \\
\partial h(x+u)=\partial h(x).\label{eq:ve}
\end{gather}
If in addition \(\partial h(x)\neq\emptyset\), then \(\partial h(x+u)\neq\emptyset\) and
\begin{gather}
h'(x+u;d)=h'(x;d),\ \fa d\in\mathbb{R}^n,\\
\mathcal{V}h(x+u)=\mathcal{V}h(x),\ \mathcal{U}h(x+u)=\mathcal{U}h(x).\label{eq:ue}
\end{gather}
If moreover \(0\in\partial h(0)\), then \(h(u)=0\).
In particular, \(x\in\dom\partial h\) if and only if \(x+u\in\dom\partial h\).
\end{proposition}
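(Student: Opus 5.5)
The plan is to reduce everything to the structure of \(D:=\partial h(0)\) and the face formula of Proposition~\ref{prop:sub}\ref{it:1-3}. First I identify \(\mathcal{U}h(0)\) concretely. By \eqref{Uxdefalter} and Proposition~\ref{prop:sub}\ref{item:direcDir}, as in the proof of Proposition~\ref{prop:U0int}, \(u\in\mathcal{U}h(0)\) exactly when \(h(-u)=-h(u)\). Since \(h=\sigma_D\), this reads \(\sup_{s\in D}\mang{s,u}=\inf_{s\in D}\mang{s,u}\), so both sides are finite. Hence \(\mang{s,u}\) is the same constant \(c_u\) for every \(s\in D\). Equivalently, \(u\in(\operatorname{span}(D-g_0))^\perp\) for any \(g_0\in D\), and \(c_u=\mang{g_0,u}=h(u)\). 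This single fact drives all the claims.

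For \eqref{eq:hxu} I use the support function directly: \(h(x+u)=\sup_{s\in D}(\mang{s,x}+\mang{s,u})=\sup_{s\in D}\mang{s,x}+c_u=h(x)+\mang{g_0,u}\). This identity is valid also when \(h(x)=+\infty\), because \(c_u\) is finite. For \eqref{eq:ve} I apply Proposition~\ref{prop:sub}\ref{it:1-3}. An element \(s\in D\) lies in \(\partial h(x+u)\) iff \(\mang{s,x+u}=h(x+u)\), that is, iff \(\mang{s,x}+c_u=h(x)+c_u\), that is, iff \(s\in\partial h(x)\). When \(h(x)=+\infty\) both faces are empty, so \eqref{eq:ve} still holds. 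This immediately gives the final statement that \(x\in\dom\partial h\iff x+u\in\dom\partial h\), and it gives nonemptiness of \(\partial h(x+u)\) when \(\partial h(x)\neq\emptyset\).

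With \(\partial h(x)\neq\emptyset\), the remaining claims follow from the equality of subdifferentials. The equality \eqref{eq:ue} is immediate from Definition~\ref{def:vu}, since \(\mathcal{V}\) depends only on the set \(\partial h(\cdot)\) and \(\mathcal{U}\) is its orthogonal complement. For the directional derivatives, \(h\) is convex and \(x\) lies in \(\dom\partial h\). I would use \(h'(x;d)=\sup_{s\in\partial h(x)}\mang{s,d}\), the formula already invoked in Lemma~\ref{lem:2}; this holds up to closure, i.e.\ \(h'(x;\cdot)\) has closure \(\sigma_{\partial h(x)}\). The cleaner route avoids closures altogether. From \eqref{eq:hxu} applied at \(x+td\) I get \(h(x+u+td)-h(x+u)=h(x+td)-h(x)\) for all \(t>0\). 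Dividing by \(t\) and letting \(t\searrow0\) gives \(h'(x+u;d)=h'(x;d)\) exactly, including infinite values.

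Finally, if \(0\in D\), take \(g_0=0\) in the identity \(c_u=\mang{g_0,u}\), which gives \(h(u)=c_u=0\). The only delicate point I expect is the first step: showing rigorously that \(u\in\mathcal{U}h(0)\) forces \(\mang{\cdot,u}\) to be constant on all of \(D\), including an unbounded \(D\). This should be handled by the finiteness argument above. Note that \(\mathcal{U}h(0)\) is defined because \(0\in\dom\partial h\) by Proposition~\ref{prop:domsub}(i).
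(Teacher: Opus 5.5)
Your proof is correct, and its core is the paper's argument. The paper writes \(h=\mang{g_0,\cdot}+\sigma_S\) with \(S=\partial h(0)-g_0\subset\mathcal{V}h(0)\), so \(\sigma_S(x+u)=\sigma_S(x)\) for \(u\perp\mathcal{V}h(0)\). This is your constancy of \(\mang{\cdot,u}\) on \(D\). The paper then reads \eqref{eq:ve} off the face formula of Proposition~\ref{prop:sub}\ref{it:1-3}, exactly as you do.

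Two points differ.

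First, the step you flag as delicate is immediate from Definition~\ref{def:vu}. The condition \(u\in\mathcal{U}h(0)=\bigl(\operatorname{span}(D-g_0)\bigr)^\perp\) says \(\mang{s-g_0,u}=0\) for every \(s\in D\), bounded or not. So the detour through \eqref{Uxdefalter}, Proposition~\ref{prop:sub}\ref{item:direcDir} and the finiteness argument is unnecessary.

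Second, for the directional derivatives the paper invokes \(h'(x;d)=\sup_{s\in\partial h(x)}\mang{s,d}\) at \(x\) and at \(x+u\). As you note, in general this holds only up to closure when \(x\) lies on the relative boundary of \(\dom h\). For example, take \(h=\delta_K\) with \(K\) the Lorentz cone in \(\mathbb{R}^3\), \(x=(1,0,1)\) and \(d=(0,1,0)\): then \(h'(x;d)=+\infty\) while \(\sigma_{N_K(x)}(d)=0\). Your difference-quotient argument applies \eqref{eq:hxu} at \(x+td\), uses that \(h(x+u)\) is finite, and lets \(t\searrow 0\). It needs no such formula and handles infinite values directly, so on that step your route is the sounder one.

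Your derivation of \(h(u)=0\), by taking \(g_0=0\) in \(c_u=\mang{g_0,u}\), is a valid shortcut of the paper's passage through \(\partial h(u)=\partial h(0)\).
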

\begin{proof}
Take $g_0\in\partial h(0)$ and let
\[S\mathrel{\mathop:}=\partial h(0)-g_0\]
Since $h(x)=\sigma_{\partial h(0)}(x)=\max\{\left<g,x\right>:g\in\partial h(0)\}$, the identity
\begin{equation}\label{h-g0}
h(x)=\left<g_0,x\right>+\sigma_{S}(x)\end{equation}
holds for all $x\in\mathbb{R}^n$.
For any $x\in\mathbb{R}^n$ and $u\in \mathcal{U}h(0)$ it holds                 
that $\sigma_S(x+u)=\sigma_S(x)$ because \(S\subset\mathcal{V}h(0)\),
and, in view of \eqref{h-g0}, we have \eqref{eq:hxu}. 

To show \eqref{eq:ve}, by Proposition \ref{prop:sub}\ref{it:1-3} and \eqref{h-g0}, it suffices to show for all \(s\in\partial h(0)\), \(\mang{s,x+u}=\mang{g_0,x+u}+\delta_S(x+u)\) if and only if \(\mang{s,x}=\mang{g_0,x}+\delta_S(x)\). 
This is indeed true because $\sigma_S(x+u)=\sigma_S(x)$ and \(s-g_0\in\mathcal{U}h(0)^\perp\). 
If \(\partial h(x)\neq\emptyset\), then \eqref{eq:ve} forces \(\partial h(x+u)\neq\emptyset\), and
$h'(x;d)=\sup_{s\in\partial h(x)}\mang{s,d}= h'(x+u;d)$ for any $d\in\mathbb{R}^n$.
The identity \eqref{eq:ue} is then the definition \eqref{eq:defVU} applied to the same nonempty set.

Setting \(x=0\) in \eqref{eq:ve} we get  \(\partial h(u)=\partial h(0)\) for all \(u\in\mathcal{U}h(0)\), and in view of Proposition \ref{prop:sub}\ref{it:1-3} we have
\begin{equation}\label{eq:13}
 \mang{s,u}=h(u),\ \fa s\in \partial h(0). 
\end{equation}
 If \(0\in\partial h(0)\), then by \eqref{eq:13} we get \(h(u)=0\).
\end{proof}
For the set of minimizers of \(h\) defined in \eqref{eq:argminH} we have the following. 
\begin{proposition}[lineality contained in the minimizer set]\label{prop:UinWh}
If \(W^h\not=\emptyset \) then \(\mathcal{U}h(0)\subset W^h\). 
\end{proposition}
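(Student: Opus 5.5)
The plan is to reduce the claim to two facts already on record. The first is that a nonempty minimizer set forces \(0\in\partial h(0)\) and \(\min h=0\) (Proposition~\ref{prop:end}). The second is the last assertion of Proposition~\ref{prop:2}: every \(u\in\mathcal{U}h(0)\) satisfies \(h(u)=0\) once \(0\in\partial h(0)\).

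First step. Assume \(W^h\neq\emptyset\). By Proposition~\ref{prop:end}, \(0\in W^h\) and \(\min h=0\). Since \(h\) is convex and \(0\) is a global minimizer, we get \(0\in\partial h(0)\). This can also be read off from the proof of Proposition~\ref{prop:end}, where \(0\in\partial h(x^*)\subset\partial h(0)\) by Proposition~\ref{prop:sub}\ref{it:1-3}.

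Second step. Fix \(u\in\mathcal{U}h(0)\). Proposition~\ref{prop:2} with \(x=0\) gives \(\partial h(u)=\partial h(0)\), and by \eqref{eq:13} we have \(\langle s,u\rangle=h(u)\) for every \(s\in\partial h(0)\). Taking \(s=0\in\partial h(0)\) yields \(h(u)=0=\min h\). Hence \(u\in W^h\).

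An alternative route is to note that \(0\in\partial h(u)=\partial h(0)\) and invoke convexity; this gives optimality of \(u\) directly. There is no real obstacle here. The only point needing care is that \(0\in\partial h(0)\) is really available from \(W^h\neq\emptyset\), and that is exactly the content of Proposition~\ref{prop:end}.
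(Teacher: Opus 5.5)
Your argument is correct and is essentially the paper's: both take \(0\in\partial h(0)\) from Proposition~\ref{prop:end} and \(\partial h(u)=\partial h(0)\) from \eqref{eq:ve} with \(x=0\). The paper finishes with your ``alternative route'' (\(0\in\partial h(u)\), hence \(u\) is a minimizer by convexity), whereas your main route reads off \(h(u)=0=\min h\) from \eqref{eq:13}; the two finishes are equivalent.
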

\begin{proof}
If \(W^h\not=\emptyset \) then from Proposition \ref{prop:end} we have 
\(0\in W^h\) and hence \( 0\in \partial h(0)\). 
In view of \eqref{eq:ve} with 0 in place of  \(x\) we have 
\(\partial h(u)=\partial h(0)\) for all \(u\in \mathcal{U}h(0)\). 
Consequently, \(0\in \partial h(u)\) for all \(u\in \mathcal{U}h(0)\) and the proof is finished. 
\end{proof}
Example~\ref{pos:semi} attains equality: for \(h(x)=\sqrt{\mang{Qx,x}}\) with \(Q\) positive semidefinite, \(W^h=\ker Q=\mathcal{U}h(0)\).
The subspaces are nested, and on the lineality space the limits are one-sided.
\begin{proposition}\label{prop:3}
Given \(h\in\mathcal{H}\) and \(x\in\mathbb{R}^n\) with \(\partial h(x)\neq\emptyset\),
\begin{gather}
\mathcal{V}h(x)\subset \mathcal{V}h(0), \ \mathcal{U}h(0)\subset  \mathcal{U}h(x).\notag
\end{gather}
If \(u\in \mathcal{U}h(0)\) then \(\partial h(u)=\partial h(0)\neq\emptyset\) and
\begin{gather}
\mathcal{V}h(u)=  \mathcal{V}h(0), \ \mathcal{U}h(u)= \mathcal{U}h(0),\label{eq:uv-eq}
\\
\limsup_{\substack{x\rightarrow u\\\partial h(x)\neq\emptyset}} \mathcal{V}h(x)=\mathcal{V}h(u),\label{eq:sub-con0}
\\
\liminf_{\substack{x\rightarrow u\\\partial h(x)\neq\emptyset}} \mathcal{U}h(x)=\mathcal{U}h(u).\label{eq:scin0}
\end{gather}
\end{proposition}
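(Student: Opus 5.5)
The plan is to get the nesting from the face structure, and then read off both limits from Proposition~\ref{prop:basicSemiC}.

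\textbf{Nesting.} By Proposition~\ref{prop:sub}\ref{it:1-3}, \(\partial h(x)=F_{\partial h(0)}(x)\subset\partial h(0)\). Since \(\partial h(x)\neq\emptyset\), we may pick one \(g\in\partial h(x)\) and use it as the base point in both definitions \eqref{eq:defVU}, which is allowed because the span does not depend on the choice. Then
\[
\partial h(x)-g\subset\partial h(0)-g,
\]
so taking spans gives \(\mathcal{V}h(x)\subset\mathcal{V}h(0)\). Taking orthogonal complements reverses the inclusion and gives \(\mathcal{U}h(0)\subset\mathcal{U}h(x)\).

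\textbf{Points of the lineality space.} Let \(u\in\mathcal{U}h(0)\). Proposition~\ref{prop:domsub}(i) gives \(\partial h(0)\neq\emptyset\). Applying Proposition~\ref{prop:2} with \(x=0\), namely \eqref{eq:ve} and \eqref{eq:ue}, yields \(\partial h(u)=\partial h(0)\neq\emptyset\) together with \eqref{eq:uv-eq}.

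\textbf{Limits.} Consider the closed-valued mappings \(x\mapsto\mathcal{V}h(x)\) and \(x\mapsto\mathcal{U}h(x)\), restricted to \(\dom\partial h\); subspaces are closed sets. By the nesting and \eqref{eq:uv-eq}, every \(x\in\dom\partial h\) satisfies
\[
\mathcal{V}h(x)\subset\mathcal{V}h(0)=\mathcal{V}h(u),\qquad \mathcal{U}h(u)=\mathcal{U}h(0)\subset\mathcal{U}h(x).
\]
These inclusions hold on all of \(\dom\partial h\), so in particular on any neighbourhood of \(u\).

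Proposition~\ref{prop:basicSemiC}, applied to the mappings restricted to \(\dom\partial h\) (which contains \(u\)), then gives the two limits:
\begin{itemize}
\item the first half gives \(\limsup\mathcal{V}h(x)=\mathcal{V}h(u)\), which is \eqref{eq:sub-con0};
\item the second half gives \(\liminf\mathcal{U}h(x)=\mathcal{U}h(u)\), which is \eqref{eq:scin0}.
\end{itemize}

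\textbf{Main obstacle.} The only delicate point is that Proposition~\ref{prop:basicSemiC} as stated yields the full equality in each case, not just one inclusion. For the outer limit, the inclusion \(\limsup\subset\mathcal{V}h(u)\) comes from closedness. The reverse inclusion comes from the constant sequence \(x_k=u\), which is admissible because \(u\in\dom\partial h\). The inner limit is handled the same way. Its inclusion \(\mathcal{U}h(u)\subset\liminf\) uses the constant selection \(y\in\mathcal{U}h(u)\subset\mathcal{U}h(x_k)\). Its inclusion \(\liminf\subset\mathcal{U}h(u)\) again uses the sequence \(x_k=u\), and this is valid since the limits are taken along \(\dom\partial h\) including the base point.
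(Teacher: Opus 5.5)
Your proposal is correct and follows the paper's proof essentially step for step. The face inclusion $\partial h(x)\subset\partial h(0)$ from Proposition~\ref{prop:sub}\ref{it:1-3} gives the nesting, Proposition~\ref{prop:2} at $x=0$ gives \eqref{eq:uv-eq}, and Proposition~\ref{prop:basicSemiC} applied along $\dom\partial h$ gives both limits. Your closing remark is also correct and is slightly more careful than the paper's own proof of Proposition~\ref{prop:basicSemiC}, which reaches equality only by invoking both inclusions: you obtain the missing inclusion of each one-sided equality from the constant sequence $x_k=u$, which is admissible in the Rockafellar--Wets convention because $u\in\dom\partial h$.
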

\begin{proof}
From Proposition \ref{prop:sub}\ref{it:1-3} we see \(\partial h(x)\) is a subset of \(\partial h(0)\) whenever it is nonempty. Definition \eqref{eq:defVU} then gives \(\mathcal{V}h(x)\subset\mathcal{V}h(0)\), and the orthogonal complement yields \(\mathcal{U}h(0)\subset \mathcal{U}h(x)\).
To see \eqref{eq:uv-eq}, substitute \(x=0\) in \eqref{eq:ue}; here \(\partial h(0)\neq\emptyset\) by Proposition \ref{prop:sub}\ref{it1:2}.

Consequently, \(\mathcal{V} h(x)\subset \mathcal{V} h(u) \) and \(\mathcal{U} h(u)\subset \mathcal{U} h(x)\) hold at every \(x\) with \(\partial h(x)\neq\emptyset\).
Proposition \ref{prop:basicSemiC} then gives \eqref{eq:sub-con0} and \eqref{eq:scin0} along that set. 

\end{proof}
The next example shows that the inclusion of Proposition~\ref{prop:3} is strict off the origin, and that Proposition~\ref{prop:2} is read on the rays.
\begin{example}[ReLU]\label{eg:relu}
The rectifier \(h(t)=\max\{t,0\}=\sigma_{[0,1]}(t)\) lies in \(\mathcal{H}\).
Then \(\partial h(0)=[0,1]\), so \(\mathcal{U}h(0)=\{0\}\) and \(\mathcal{V}h(0)=\mathbb{R}\).
The exact subdifferential is the exposed face \(\partial h(t)=\{1\}\) for \(t>0\), \(\{0\}\) for \(t<0\), and \([0,1]\) at \(t=0\).
Hence \(\mathcal{U}h(t)=\mathbb{R}\) on either open ray and \(\mathcal{U}h(0)=\{0\}\): the inclusion of Proposition~\ref{prop:3} is strict off the origin.
On the lineality space the identity is trivial, since \(\mathcal{U}h(0)=\{0\}\); Proposition~\ref{prop:2} is read on the rays, where \(\partial h\) is a singleton and the function is linear.
The \(\varepsilon\)-subdifferential is the slice
\[
\partial_\varepsilon h(t)
=
\{s\in[0,1]:st\ge h(t)-\varepsilon\}
=
\begin{cases}
[1-\varepsilon/t,\,1] & t>\varepsilon,\\
[0,1] & \lvert t\rvert\le\varepsilon,\\
[0,\,\varepsilon/\lvert t\rvert] & t<-\varepsilon.
\end{cases}
\]
\end{example}

\begin{remark}[the standard enlargement misses the smooth rays]\label{rem:relu-miss}
For the rectifier, \(\partial_\varepsilon h(t)\) has positive length for every \(\varepsilon>0\), so \(\operatorname{aff}\partial_\varepsilon h(t)=\mathbb{R}\) and \(\mathcal{U}_\varepsilon h(t)=\{0\}\) at every \(t\).
The identification \(\mathcal{U}_\varepsilon h(t)=\mathbb{R}\) on the open rays fails as soon as \(\varepsilon>0\).
The same sequences as in Remark~\ref{rem:simul} show that \(\varepsilon\searrow 0\) cannot be taken simultaneously with \(t\to 0\) at an arbitrary rate.
\end{remark}

For the group lasso of Example~\ref{eg:LW}, at a point where one group is nonzero and the others vanish, \(\partial h\) is a singleton in the active groups and a ball in the inactive ones, so \(\mathcal{U}h(F(x))\) strictly contains \(\mathcal{U}h(0)=\{0\}\): the nesting of Proposition~\ref{prop:3} is strict.

On the leaf the composite is affine in the outer variable.
\begin{proposition}\label{prop:compvu}
Suppose \(f(x)=h\mbk{F(x)}\) for a smooth mapping \(F\colon\mathbb{R}^n\to\mathbb{R}^m\) and a function \(h\in\mathcal{H}\). 
If \(x\in  F\inv\mbk{\mathcal{U}h(0)}\), then 
\begin{equation}\label{eq:fsmoF}
f(x)=\mang{g_0,F(x)}, \fa g_0\in\ri\partial h(0).
\end{equation}
If in addition \(x\) satisfies Assumption \ref{as:2}, then 
\begin{gather}
\partial f(x)=F'(x)^\top\partial h(0)
\label{eq:pfyinu}\\
f'(x;d)=h\mbk{F'(x)d},\ \fa d\in\mathbb{R}^n
\notag\\
\mathcal{V}f(x)=F'(x)^\top\mathcal{V}h(0),\qquad
\mathcal{U}f(x)=F'(x)^{-1}\bigl(\mathcal{U}h(0)\bigr)\label{eq:uvy}.
\end{gather}
\end{proposition}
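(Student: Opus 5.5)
The plan is to reduce every claim to the behaviour of \(h\) at the point \(u:=F(x)\in\mathcal{U}h(0)\), where Proposition~\ref{prop:2} and Proposition~\ref{prop:3} already give complete information. Then the exact chain rule is applied as a black box.

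\emph{Step 1: the identity \eqref{eq:fsmoF}.} Apply \eqref{eq:hxu} of Proposition~\ref{prop:2} with base point \(0\) and increment \(u\). Since \(h(0)=0\) by Proposition~\ref{prop:sub}\ref{it1:1}, this gives \(h(u)=\mang{g_0,u}\) for every \(g_0\in\partial h(0)\), and in particular for every \(g_0\in\ri\partial h(0)\). Equivalently, by \eqref{eq:13}, every element of \(\partial h(0)\) pairs with \(u\) to the value \(h(u)\). Substituting \(u=F(x)\) yields \(f(x)=\mang{g_0,F(x)}\). No qualification is needed here, which matches the statement placing Assumption~\ref{as:2} only in the second part.

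\emph{Step 2: the subdifferential and the directional derivative.} Under Assumption~\ref{as:2} the chain rule (CR) holds at \(x\), so \(\partial f(x)=F'(x)^\top\partial h(F(x))\). Proposition~\ref{prop:3} gives \(\partial h(u)=\partial h(0)\neq\emptyset\), which is \eqref{eq:pfyinu}. For \(f'(x;d)\), the plan has three moves.
\begin{itemize}
\item Use that \(f\) is subdifferentially regular at \(x\) under the convex chain rule (\cite[Theorem~10.6]{Rockafellar1998} gives regularity in case (R2); the relative-interior chain rule of \cite{Liu2026vu} covers case (R1)).
\item From regularity, \(f'(x;d)=\sup_{s\in\partial f(x)}\mang{s,d}\), provided \(\partial f(x)\neq\emptyset\), which holds here.
\item Compute \(\sup_{g\in\partial h(0)}\mang{F'(x)^\top g,d}=\sigma_{\partial h(0)}(F'(x)d)=h(F'(x)d)\), by Proposition~\ref{prop:sub}\ref{it1:2}.
\end{itemize}
An alternative route avoids regularity: write \(h(F(x+td))\) and use \(h'(u;\cdot)=h'(0;\cdot)=h\), which follows from Proposition~\ref{prop:2} and Proposition~\ref{prop:sub}\ref{item:direcDir}, together with a first-order expansion of \(F\). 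This route needs \(h\) to be locally Lipschitz along the expansion, which fails when \(h\) is extended-valued. I would therefore take the regularity route. I expect this to be the main obstacle, namely justifying the supremum formula for \(f'(x;\cdot)\) in the nonfinite case (R1)/(R2). If regularity is not directly citable in case (R1), I would restrict to \(\aff(\dom h)\) and use that \(F\) maps a neighbourhood of \(x\) into that affine set, so that \(h\) is finite and continuous relative to it near \(u\).

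\emph{Step 3: the subspaces.} Apply the exact chain rule \cite[Theorem~1]{Liu2026vu} at \(x\in\mathcal{R}_{h,F}\). This gives \(\mathcal{V}f(x)=F'(x)^\top\mathcal{V}h(F(x))\) and \(\mathcal{U}f(x)=F'(x)^{-1}(\mathcal{U}h(F(x)))\). Then substitute \(\mathcal{V}h(u)=\mathcal{V}h(0)\) and \(\mathcal{U}h(u)=\mathcal{U}h(0)\) from \eqref{eq:uv-eq}, which gives \eqref{eq:uvy}. As a consistency check, the \(\mathcal{V}\) formula also follows directly from \eqref{eq:pfyinu}, since the span of a linear image of \(\partial h(0)-g\) is the linear image of its span.
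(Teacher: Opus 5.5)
The gap is in the second bullet of your Step~2. Your Steps~1 and~3, and the formula for \(\partial f(x)\), match the paper's proof: Proposition~\ref{prop:2} at base point \(0\) gives \(h(u)=\langle g_0,u\rangle\), \(\partial h(u)=\partial h(0)\), \(\mathcal{V}h(u)=\mathcal{V}h(0)\) and \(\mathcal{U}h(u)=\mathcal{U}h(0)\), and then (CR) together with \cite[Theorem~1]{Liu2026vu} finishes.

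Subdifferential regularity identifies \(\sigma_{\partial f(x)}\) with the subderivative \(\mathrm{d}f(x)(d)=\liminf_{t\searrow 0,\,d'\to d}\bigl(f(x+td')-f(x)\bigr)/t\). It does not identify it with the one-sided directional derivative \(f'(x;d)\) along the fixed direction \(d\), which is what \(f'(x;d)\) means in the Rockafellar--Wets notation the paper adopts. Already for convex \(f\) with \(\partial f(x)\neq\emptyset\), only the lower semicontinuous hull of \(f'(x;\cdot)\) equals \(\sigma_{\partial f(x)}\), and the two can differ at boundary points of \(\dom f\).

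Under (R2) this is not a technicality: the displayed identity is false. Take \(h=\delta_{[0,\infty)}\in\mathcal{H}\) on \(\mathbb{R}\), so that \(\partial h(0)=(-\infty,0]\) and \(\mathcal{U}h(0)=\{0\}\), and take \(F(x_1,x_2)=x_2-x_1^2\) at \(x=0\).
\begin{itemize}
\item Then \(F(0)=0\in\dom h\setminus\ri(\dom h)\) and \(\ker F'(0)^\top=\{0\}\), so (R2) holds.
\item The composite \(f\) is the indicator of the convex set \(\{x_2\ge x_1^2\}\), hence regular at \(0\).
\item For \(d=(1,0)\) one has \(f(t,0)=+\infty\) for all \(t>0\), so \(f'(0;d)=+\infty\).
\item Yet \(h(F'(0)d)=h(0)=0=\sigma_{\partial f(0)}(d)=\mathrm{d}f(0)(d)\), the last value attained along \(d'=(1,t)\).
\end{itemize}
So under (R2) your argument proves \(\mathrm{d}f(x)(d)=h(F'(x)d)\) and nothing more. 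The curvature of \(F\) pushes \(F(x+td)\) out of \(\dom h\) even though \(F'(x)d\) is admissible, and only the \(\liminf\) over nearby directions absorbs this.

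Under (R1) your fallback is the correct argument. From \eqref{eq:hxu} and positive homogeneity, \(h(u+tw)=h(u)+t\,h(w)\) for every \(w\) and \(t>0\), so \(h'(u;\cdot)=h\) exactly, with no Lipschitz hypothesis. Because \(F\) maps a neighbourhood of \(x\) into \(\operatorname{aff}(\dom h)\), the vector \(F'(x)d\) is parallel to that affine set. Hence both \(F(x+td)\) and \(u+tF'(x)d\) lie in \(\operatorname{aff}(\dom h)\) near \(u\in\ri(\dom h)\), where \(h\) is Lipschitz relative to \(\operatorname{aff}(\dom h)\). The \(o(t)\) remainder is then harmless, and \(f'(x;d)=h(F'(x)d)\). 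The same exact identity also gives the \(f'\)-version under (R2) when \(F\) is affine.

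The paper's proof records \(h'(F(x);d)=h(d)\) and appeals to the chain rule, which is your ``alternative route'', without separating the two cases. It therefore has the same defect under (R2), where the formula has to be read with \(\mathrm{d}f(x)\) in place of \(f'(x;\cdot)\).
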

\begin{proof}
Substitute \(x\) in Proposition \ref{prop:2} with 0 to see for all \(u\in\mathcal{U}h(0)\)
\begin{gather}
h(u)=h(0)+\mang{g_0,u}=\mang{g_0,u},\ \partial h(u)=\partial h(0),\label{eq:hu0}\\
h'(u;d)=h'(0;d)=h(d), \ \fa d\in\mathbb{R}^n,\ \mathcal{V}h(u)=\mathcal{V}h(0),\ \mathcal{U}h(u)=\mathcal{U}h(0).\label{eq:hdirec}
\end{gather}
If \(x\in  F\inv\mbk{\mathcal{U}h(0)}\), then  \(F(x)\in \mathcal{U}h(0)\), and by \eqref{eq:hu0} and \eqref{eq:hdirec} we have \(f(x)=h\mbk{F(x)}=\mang{g_0,F(x)}\), \(\partial h\mbk{F(x)}=\partial h(0)\), \(h'\mbk{F(x);d}=h(d)\), \(\mathcal{U}h\mbk{F(x)}=\mathcal{U}h(0)\) and \(\mathcal{V}h\mbk{F(x)}=\mathcal{V}h(0)\). 
Consequently,  \eqref{eq:pfyinu}-\eqref{eq:uvy} follow from the basic chain rule of subdifferential  and \cite[Theorem~1]{Liu2026vu}.
\end{proof}



The enlargement is required to lie in \(\partial h(0)\) and to recover \(\partial h\) on the lineality space.
\begin{assumption}\label{assu:1}
For \(h\in \mathcal{H}\), the set-valued mapping \(\bar{\partial}_\varepsilon h(x)\colon\mathbb{R}^n\times \mathbb{R}_{\geq 0}\rightrightarrows\mathbb{R}^n \) satisfies the following conditions:
(1) \(\bar{\partial}_\varepsilon h(x)\) is convex and contained in \(\partial h(0)\) for all \((x,\varepsilon)\in \mathbb{R}^n\times\mathbb{R}_{\geq 0}\); (2) \(\bar{\partial}_0 h(u)=\partial h(u)\) for all \(u\in \mathcal{U}h(0)\)\,.
\end{assumption}
We can easily verify that \(\partial_\varepsilon h(x)\) satisfies Assumption \ref{assu:1}. 
Under this assumption the nested inclusion and the one-sided limits pass to the enlargement. 
\begin{proposition}\label{prop:vuesub}
For \(h\in\mathcal{H}\) and \((x,\varepsilon)\in\mathbb{R}^n\times\mathbb{R}_{\geq 0}\) with \(\bar{\partial}_\varepsilon h(x)\neq\emptyset\), if \(\bar{\partial}_\varepsilon h(x)\) satisfies Assumption \ref{assu:1}, then  the following holds:
\begin{gather}
\mathcal{V}_\varepsilon h(x)\subset \mathcal{V}h(0), \ \mathcal{U}h(0)\subset \mathcal{U}_{\varepsilon}h(x),\notag
\\
\limsup_{\substack{x\to u\\ \varepsilon\searrow 0}  } \mathcal{V}_\varepsilon h (x)=\mathcal{V}h(u),\text{ for all }u\in \mathcal{U}h(0)\,.\label{eq:sub-con}
\\
\liminf_{\substack{x\to u\\ \varepsilon\searrow 0} }\mathcal{U}_{\varepsilon}h(x)=\mathcal{U}h(u)\text{  for all }u\in \mathcal{U}h(0)\,.\label{eq:scin}
\end{gather}
\end{proposition}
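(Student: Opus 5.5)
The plan mirrors the proof of Proposition~\ref{prop:3}, with Assumption~\ref{assu:1} in place of the face property. The work splits into the nested inclusions, which are pure linear algebra, and the two one-sided limits, which come from Proposition~\ref{prop:basicSemiC}. The one subtlety is that the limit is taken jointly in \((x,\varepsilon)\).

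\textbf{Step 1 (inclusions).} Fix \((x,\varepsilon)\) with \(\bar\partial_\varepsilon h(x)\neq\emptyset\) and pick any \(g\in\bar\partial_\varepsilon h(x)\). By Assumption~\ref{assu:1}(1), \(\bar\partial_\varepsilon h(x)\subset\partial h(0)\), so \(g\in\partial h(0)\). Hence
\[
\bar\partial_\varepsilon h(x)-g\subset\partial h(0)-g.
\]
By Definition~\ref{def:vu}, with \(g\) as the admissible base point, the span of the right-hand side is \(\mathcal{V}h(0)\). Therefore \(\mathcal{V}_\varepsilon h(x)\subset\mathcal{V}h(0)\). Taking orthogonal complements and using \eqref{eq:4} from Proposition~\ref{prop:uepsaltdef} gives \(\mathcal{U}h(0)\subset\mathcal{U}_\varepsilon h(x)\).

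\textbf{Step 2 (identify the limit value).} Fix \(u\in\mathcal{U}h(0)\). Proposition~\ref{prop:3}, via \eqref{eq:uv-eq}, gives \(\mathcal{V}h(u)=\mathcal{V}h(0)\) and \(\mathcal{U}h(u)=\mathcal{U}h(0)\). Assumption~\ref{assu:1}(2) gives \(\bar\partial_0h(u)=\partial h(u)=\partial h(0)\). So the value at the base point \((u,0)\) is consistent: \(\mathcal{V}_0h(u)=\mathcal{V}h(u)\).

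\textbf{Step 3 (one-sided limits).} Consider the map \(S(x,\varepsilon):=\mathcal{V}_\varepsilon h(x)\), restricted to points where the enlargement is nonempty. By Steps 1–2, for all such \((x,\varepsilon)\),
\[
S(x,\varepsilon)\subset\mathcal{V}h(0)=\mathcal{V}h(u)=S(u,0).
\]
The values are subspaces, hence closed. Proposition~\ref{prop:basicSemiC}, applied on the parameter space \(\mathbb{R}^n\times\mathbb{R}_{\ge0}\), gives \(\limsup S\subset\mathcal{V}h(u)\).

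For equality, the constant-selection half of that proposition requires the reverse inclusion \(\mathcal{V}h(u)\subset\mathcal{V}_\varepsilon h(x)\) nearby, and that generally fails. This is the main obstacle. I would try first to obtain \(\supset\) from the base point itself: \((u,0)\) is admissible because \(\bar\partial_0h(u)=\partial h(0)\neq\emptyset\). I would read the outer limit, as in Proposition~\ref{prop:basicSemiC}, as including the point \(\bar x\). Then the constant sequence \((x_k,\varepsilon_k)=(u,0)\) shows every \(v\in\mathcal{V}h(u)\) is a limit point, which gives equality in \eqref{eq:sub-con}.

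The inner limit \eqref{eq:scin} is symmetric. The inclusion \(\mathcal{U}h(u)=\mathcal{U}h(0)\subset\mathcal{U}_\varepsilon h(x)\) holds for every admissible \((x,\varepsilon)\), so the constant selection gives \(\mathcal{U}h(u)\subset\liminf\mathcal{U}_\varepsilon h(x)\). The reverse inclusion \(\liminf\subset\limsup\subset\mathcal{U}_0h(u)=\mathcal{U}h(u)\) again relies on the base point being admissible. I would state this convention explicitly, in line with Proposition~\ref{prop:basicSemiC}, rather than claim a genuinely two-sided limit. Remarks~\ref{rem:simul} and~\ref{rem:relu-miss} show that the limit excluding the base point can be strictly smaller for \(\mathcal{V}\), and strictly larger for \(\mathcal{U}\) only if the enlargement shrinks.
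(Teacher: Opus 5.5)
Your Steps 1 and 2 and the $\mathcal{V}$-half of Step 3 are the paper's proof. Assumption~\ref{assu:1}(1) gives the nesting. Assumption~\ref{assu:1}(2) together with \eqref{eq:uv-eq} identifies the value at $(u,0)$. Proposition~\ref{prop:basicSemiC} is then applied on $\mathbb{R}^n\times\mathbb{R}_{\ge 0}$ with the base point $(u,0)$ admitted, which is the convention that proposition relies on.

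The step that fails is in the $\mathcal{U}$-half: the chain $\liminf\subset\limsup\subset\mathcal{U}_0h(u)$. Its second inclusion points the wrong way. Step 1 shows $\mathcal{U}_\varepsilon h(x)\supset\mathcal{U}h(u)$, and these spaces are typically strictly larger, so their outer limit is not bounded by $\mathcal{U}h(u)$. A concrete case is the extreme-point enlargement of the rectifier (Example~\ref{eg:relu-bar}), which satisfies Assumption~\ref{assu:1}. There $\mathcal{U}h(0)=\{0\}$, but $\mathcal{U}_\varepsilon h(2\varepsilon)=\mathbb{R}$ for every $\varepsilon>0$, so the outer limit at $(0,0)$ is $\mathbb{R}$. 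The repair is to bound the inner limit directly. A point of $\liminf\mathcal{U}_\varepsilon h(x)$ must be reached along every admissible sequence, in particular along the constant one $(x_k,\varepsilon_k)=(u,0)$, so it lies in the closed set $\mathcal{U}_0h(u)=\mathcal{U}h(u)$. This is exactly the second half of Proposition~\ref{prop:basicSemiC}, which you can quote instead of passing through the outer limit.

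There are two smaller points. First, suppose $\bar\partial_\varepsilon h$ is a proper approximation, as Definition~\ref{def:epsVU} presupposes. Then $\partial h(0)=\partial h(u)\subset\bar\partial_\varepsilon h(u)\subset\partial h(0)$. So the sequences $(u,\varepsilon_k)$ with $\varepsilon_k>0$ already give $\mathcal{V}h(u)\subset\limsup\mathcal{V}_\varepsilon h(x)$ and $\liminf\mathcal{U}_\varepsilon h(x)\subset\mathcal{U}h(u)$, and the base-point convention is not needed. Second, your closing claim is not right: Remarks~\ref{rem:simul} and~\ref{rem:relu-miss} do not exhibit a strictly smaller $\mathcal{V}$-limit. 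For $\partial_\varepsilon$ of the rectifier or of $\lvert\cdot\rvert$, one has $\mathcal{V}_\varepsilon h(t)=\mathbb{R}=\mathcal{V}h(0)$ for every $t$ and every $\varepsilon>0$. A strictly smaller limit off the base point can only come from an enlargement that meets Assumption~\ref{assu:1} without containing $\partial h$.
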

\begin{proof}
By Assumption \ref{assu:1}, \(\bar{\partial}_\varepsilon h(x)\subset \partial h(0)\) for all \((x,\varepsilon)\in \mathbb{R}^n\times\mathbb{R}_{\geq 0}\)\,. Then the definition of  \(\mathcal{V}_\varepsilon h (x)\)  immediately yields \(\mathcal{V}_\varepsilon h(x)\subset\mathcal{V}h(0)\). 
It follows from \eqref{eq:4} and the uniqueness of the orthogonal complement that \(\mathcal{U}h(0)\subset \mathcal{U}_{\varepsilon}h(x)\) whenever \(\bar{\partial}_\varepsilon h(x)\neq\emptyset\). 
From the second part of Assumption \ref{assu:1} and \eqref{eq:uv-eq} we get 
\(\mathcal{V}_0h(u)=\mathcal{V}h(u)=\mathcal{V} h(0),\ \mathcal{U}_0 h(u)=\mathcal{U}h(u)=\mathcal{U} h(0)\) and therefore 
\(\mathcal{V}_\varepsilon h(x)\subset\mathcal{V}h(u)\) and \(\mathcal{U}h(u)\subset \mathcal{U}_{\varepsilon}h(x)\). 
Then applying Proposition \ref{prop:basicSemiC} can easily show \eqref{eq:sub-con} and \eqref{eq:scin}.
\end{proof}
The transversality below is a nonseparability condition; it is what lets the limits pass from \(h\) to \(f\).
\begin{lemma}\label{prop:subspaces}
Let \(A:\mathbb{R}^m\to\mathbb{R}^n\) be linear and let \(U\subset\mathbb{R}^n\) be a subspace.
Then \(\operatorname{Im} A\) cannot be separated from \(U\) if and only if
\(\ker A^\top\cap U^\perp=\{0\}\).
\end{lemma}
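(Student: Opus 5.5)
The plan is to reduce the statement to Lemma~\ref{thm:separation}, applied with the convex set \(A:=\operatorname{Im}A\) (for the linear map \(A\)), the base point \(y:=0\in\operatorname{Im}A\), and the subspace \(L:=U\), so that \(B=0+U=U\). A linear subspace is a nonempty convex set containing \(0\), so the lemma's hypotheses are met. The lemma then says \(\operatorname{Im}A\) and \(U\) cannot be separated if and only if \(U^\perp\cap N_{\operatorname{Im}A}(0)=\{0\}\). What remains is to identify the normal cone to a subspace at the origin and to express it through \(A^\top\).

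\textbf{Step 1 (the normal cone).} The normal cone to a subspace \(S\) at any of its points is \(S^\perp\). Indeed, \(N_S(0)=\{v:\langle v,s\rangle\le 0\ \forall s\in S\}\), and since \(S=-S\) this forces \(\langle v,s\rangle=0\) for all \(s\in S\). Hence \(N_{\operatorname{Im}A}(0)=(\operatorname{Im}A)^\perp\).

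\textbf{Step 2 (expressing it through \(A^\top\)).} By the fundamental theorem of linear algebra, \((\operatorname{Im}A)^\perp=\ker A^\top\). Concretely, \(v\perp Ax\) for all \(x\) if and only if \(\langle A^\top v,x\rangle=0\) for all \(x\), that is, \(A^\top v=0\). Substituting gives exactly the condition \(\ker A^\top\cap U^\perp=\{0\}\).

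\textbf{Step 3 (the notion of separation).} Some care is needed here, and this is the main point to check. For two subspaces, ordinary (non-proper) separation is always possible through a hyperplane containing both whenever \(\operatorname{Im}A+U\neq\mathbb{R}^n\). So "cannot be separated" must be read as in Lemma~\ref{thm:separation}, that is, in the sense of Rockafellar's Theorem~11.3 as used there.

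To confirm consistency, I would check directly what the condition means. The condition \(\ker A^\top\cap U^\perp=\{0\}\) is equivalent to \((\operatorname{Im}A+U)^\perp=\{0\}\), that is, \(\operatorname{Im}A+U=\mathbb{R}^n\), which is the transversality condition.

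I would then verify the equivalence directly:
\begin{itemize}
\item A nonzero \(v\in(\operatorname{Im}A)^\perp\cap U^\perp\) defines the hyperplane \(\{z:\langle v,z\rangle=0\}\), which contains both sets and so separates them.
\item Conversely, suppose a nonzero \(v\) satisfies \(\sup_{\operatorname{Im}A}\langle v,\cdot\rangle\le\inf_U\langle v,\cdot\rangle\). Since both sets are subspaces, this forces \(v\) to vanish on each of them, so \(v\in(\operatorname{Im}A)^\perp\cap U^\perp\).
\end{itemize}
This matches the convention in Lemma~\ref{thm:separation}. I would state the argument in this direct form, citing that lemma only as the template.
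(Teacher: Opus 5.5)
Your direct argument in Step~3 is correct and is essentially the paper's proof. The paper writes \(\operatorname{Im}A=(\ker A^\top)^\perp\) and \(U=(U^\perp)^\perp\), then asserts that two subspaces fail to be separable exactly when their orthogonal complements meet only in \(\{0\}\). Your Steps~1--2 give the same identification \((\operatorname{Im}A)^\perp=\ker A^\top\), and your two bullets prove the subspace assertion that the paper leaves unproved.

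One sentence in Step~3 should be reversed. ``Cannot be separated'' has to mean ordinary separation, not the proper separation of the Theorem~11.3 you invoke. Two subspaces always share \(0\), and \(0\) lies in both relative interiors, so by that theorem they can never be properly separated. Under that reading the left-hand side of the lemma is always true, and the lemma fails whenever \(\operatorname{Im}A+U\neq\mathbb{R}^n\) (take \(A=0\) and \(U=\{0\}\)). The fact you raise as a difficulty, that a common hyperplane exists whenever \(\operatorname{Im}A+U\neq\mathbb{R}^n\), is not an obstruction. It is exactly the content of the lemma. Your first bullet, a hyperplane containing both sets, is an improper separation, so your own verification already uses the ordinary notion. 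The same caveat applies to the detour through Lemma~\ref{thm:separation}: it is valid only with ordinary separation. When the convex set is a subspace, every normal vector yields a hyperplane containing both sets, so no proper separation is ever available.
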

\begin{proof}
One has \(\operatorname{Im} A=(\ker A^\top)^\perp\) and \(U=(U^\perp)^\perp\).
Two subspaces of a Euclidean space fail to be separable if and only if the intersection of their orthogonals is \(\{0\}\).
\end{proof}

\subsection{Transfer and approximate leaves}
The transversality used by both theorems below is
\begin{equation}\label{eq:Meps-trans}
\ker F'(\bar x)^\top\cap\mathcal{V}h\bigl(F(\bar x)\bigr)=\{0\},
\end{equation}
equivalently \(\operatorname{Im} F'(\bar x)+\mathcal{U}h(F(\bar x))=\mathbb{R}^m\).

\begin{theorem}[transfer of \(\varepsilon\)-\(\mathcal{VU}\) under composition]\label{thm:eps-transfer}
Let \(f=h\circ F\) with \(h\in\mathcal{H}\) and \(F:\mathbb{R}^n\to\mathbb{R}^m\) of class \(C^1\).
Assume \(\bar x\in\mathcal{R}_{h,F}\).
The limits below are taken over those \((x,\varepsilon)\) for which \(\bar{\partial}_\varepsilon h(F(x))\neq\emptyset\).
If
\begin{gather}\label{eq:iscUehf}
\liminf_{\substack{x\to\bar x\\\varepsilon\searrow 0}}\mathcal{U}_\varepsilon h(F(x))=\mathcal{U}h(F(\bar x)),\\
\label{eq:outlvfe}
\limsup_{\substack{x\to\bar x\\\varepsilon\searrow 0}}\mathcal{V}_\varepsilon h(F(x))=\mathcal{V}h(F(\bar x)),
\end{gather}
and \eqref{eq:Meps-trans} holds, then
\begin{equation}\label{eq:inscuex}
\liminf_{\substack{x\to\bar x\\\varepsilon\searrow 0}}\mathcal{U}_\varepsilon f(x)=\mathcal{U}f(\bar x)
\end{equation}
and
\begin{equation}\label{eq:prop94}
\limsup_{\substack{x\to\bar x\\\varepsilon\searrow 0}}\mathcal{V}_\varepsilon f(x)=\mathcal{V}f(\bar x).
\end{equation}
\end{theorem}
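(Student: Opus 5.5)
We reduce both identities to the exact chain rule at \(\bar x\) and to the chain rule for the enlargement, Proposition~\ref{prop:epsVUchain}, at nearby points. We also need a stability argument that keeps the transversality \eqref{eq:Meps-trans} alive near \(\bar x\). Since \(\bar x\in\mathcal{R}_{h,F}\), the exact chain rule of \cite[Theorem~1]{Liu2026vu} gives \(\mathcal{U}f(\bar x)=F'(\bar x)^{-1}(\mathcal{U}h(F(\bar x)))\) and \(\mathcal{V}f(\bar x)=F'(\bar x)^\top\mathcal{V}h(F(\bar x))\). For \((x,\varepsilon)\) in the index set we use Proposition~\ref{prop:epsVUchain}, which gives \(\mathcal{V}_\varepsilon f(x)=F'(x)^\top\mathcal{V}_\varepsilon h(F(x))\) and \(\mathcal{U}_\varepsilon f(x)=F'(x)^{-1}(\mathcal{U}_\varepsilon h(F(x)))\). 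We need \(x\in\mathcal{R}_{h,F}\) near \(\bar x\). Failing that, we read \(\bar\partial_\varepsilon f\) as the transported set \(F'(x)^\top\bar\partial_\varepsilon h(F(x))\) fixed in the text; the span identity is then pure linear algebra, and the \(\mathcal{U}\)-formula follows by orthogonal complement. By Proposition~\ref{prop:uepsaltdef} it suffices to prove the \(\mathcal{V}\)-statement \eqref{eq:prop94} and the \(\mathcal{U}\)-statement \eqref{eq:inscuex} separately.

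\emph{Outer limit of \(\mathcal{V}_\varepsilon f\).} Take \(x_k\to\bar x\), \(\varepsilon_k\searrow0\) and \(w_k=F'(x_k)^\top v_k\to w\) with \(v_k\in\mathcal{V}_{\varepsilon_k}h(F(x_k))\). First we show that \((v_k)\) is bounded. If not, normalize to \(v_k/\|v_k\|\to v\neq0\). The sets are subspaces, hence cones, so by Lemma~\ref{lem:horizonout} and \eqref{eq:outlvfe} we get \(v\in\mathcal{V}h(F(\bar x))\). Dividing \(w_k\) by \(\|v_k\|\) gives \(F'(\bar x)^\top v=0\), which contradicts \eqref{eq:Meps-trans}. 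So \(v_k\) has a cluster point \(v\in\mathcal{V}h(F(\bar x))\), again by \eqref{eq:outlvfe}, and continuity of \(F'\) gives \(w=F'(\bar x)^\top v\in\mathcal{V}f(\bar x)\). This proves "\(\subset\)".

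For "\(\supset\)" we must realize every element of \(\mathcal{V}f(\bar x)\) as a limit. This is the main obstacle, since hypothesis \eqref{eq:outlvfe} only controls the outer limit on the \(h\)-side. The first thing to try is to pass to complements. Inner semicontinuity of \(\mathcal{U}_\varepsilon h\), hypothesis \eqref{eq:iscUehf}, gives lower semicontinuity of its dimension along the limit, so \(\dim\mathcal{V}_\varepsilon h(F(x))\ge\dim\mathcal{V}h(F(\bar x))\) eventually. Combined with the outer limit, this should force Painlev\'e--Kuratowski convergence \(\mathcal{V}_\varepsilon h(F(x))\to\mathcal{V}h(F(\bar x))\) of subspaces of bounded dimension, using closedness of the Grassmannian. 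Then every \(v\in\mathcal{V}h(F(\bar x))\) is a limit of \(v_k\in\mathcal{V}_{\varepsilon_k}h(F(x_k))\), and \(F'(x_k)^\top v_k\to F'(\bar x)^\top v\).

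\emph{Inner limit of \(\mathcal{U}_\varepsilon f\).} We derive this from the \(\mathcal{V}\)-convergence by orthogonal complement: the outer limit of \(\mathcal{V}_\varepsilon f\) equals \(\mathcal{V}f(\bar x)\), and the dimensions of \(\mathcal{V}_\varepsilon f(x)\) cannot exceed \(\dim\mathcal{V}f(\bar x)\) eventually, by the boundedness argument above applied to orthonormal bases. Hence \(\mathcal{U}_\varepsilon f(x)=\mathcal{V}_\varepsilon f(x)^\perp\) has eventual dimension at least \(\dim\mathcal{U}f(\bar x)\), and any limit of it is orthogonal to the limit of \(\mathcal{V}_\varepsilon f\). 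Together these give \(\liminf\mathcal{U}_\varepsilon f(x)=\mathcal{U}f(\bar x)\).

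A direct route is also available. Given \(y\) with \(F'(\bar x)y=u\in\mathcal{U}h(F(\bar x))\), pick \(u_k\in\mathcal{U}_{\varepsilon_k}h(F(x_k))\) with \(u_k\to u\) by \eqref{eq:iscUehf}. Then solve \(F'(x_k)y_k-u'_k=u_k-F'(x_k)y\) with small \((y_k,u'_k)\), where \(u'_k\in\mathcal{U}_{\varepsilon_k}h(F(x_k))\). This uses the surjectivity \(\operatorname{Im}F'(x_k)+\mathcal{U}_{\varepsilon_k}h(F(x_k))=\mathbb{R}^m\) with a uniformly bounded right inverse, which again follows from \eqref{eq:Meps-trans} and the subspace convergence.
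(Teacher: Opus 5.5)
\textbf{The step that fails} is your proof of $\mathcal{V}f(\bar x)\subset\limsup\mathcal{V}_\varepsilon f(x)$, the one you call the main obstacle. Inner semicontinuity of $\mathcal{U}_\varepsilon h$ does give $\dim\mathcal{U}_\varepsilon h(F(x))\ge\dim\mathcal{U}h(F(\bar x))$ eventually. But that means $\dim\mathcal{V}_\varepsilon h(F(x))\le\dim\mathcal{V}h(F(\bar x))$, the reverse of what you wrote. Moreover, the Painlev\'e--Kuratowski convergence $\mathcal{V}_\varepsilon h(F(x))\to\mathcal{V}h(F(\bar x))$ that you infer is false under the hypotheses of the theorem.

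Take the rectifier with the extreme-point enlargement of Example~\ref{eg:relu-bar}, $F$ the identity on $\mathbb{R}$, and $\bar x=0$. Then \eqref{eq:iscUehf}, \eqref{eq:outlvfe} and \eqref{eq:Meps-trans} all hold. Yet $\mathcal{V}_\varepsilon h(t)=\{0\}$ whenever $\lvert t\rvert>\varepsilon$, so along $t_k=2\varepsilon_k$ the inner limit of $\mathcal{V}_\varepsilon h$ is $\{0\}\neq\mathbb{R}=\mathcal{V}h(0)$. Two consequences follow. Your direct route for $\mathcal{U}$ must not lean on ``the subspace convergence'' for the uniform right inverse. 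And in your complement route, a dimension count does not by itself give $\mathcal{U}f(\bar x)\subset\liminf\mathcal{U}_\varepsilon f(x)$.

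\textbf{None of this is needed.} The inclusion $\mathcal{V}f(\bar x)\subset\limsup\mathcal{V}_\varepsilon f(x)$ is immediate and uses no transversality: \eqref{eq:outlvfe} is an equality, and a point of an outer limit needs only one approximating sequence. For $v\in\mathcal{V}h(F(\bar x))$, choose $(x_k,\varepsilon_k)\to(\bar x,0)$ in the index set and $v_k\in\mathcal{V}_{\varepsilon_k}h(F(x_k))$ with $v_k\to v$. Then $F'(x_k)^\top v_k\in\mathcal{V}_{\varepsilon_k}f(x_k)$ tends to $F'(\bar x)^\top v$, and these limits exhaust $\mathcal{V}f(\bar x)$ by the exact chain rule.

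For the uniform right inverse, only the inner-limit half of \eqref{eq:iscUehf} is required. Approximate a basis $c_1,\dots,c_p$ of $\mathcal{U}h(F(\bar x))$ by $c_i^k\in\mathcal{U}_{\varepsilon_k}h(F(x_k))$. The maps $(z,\alpha)\mapsto F'(x_k)z-\sum_i\alpha_ic_i^k$ converge to a map that is surjective by \eqref{eq:Meps-trans}, so their pseudo-inverses converge.

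Alternatively, $\mathcal{U}f(\bar x)\subset\liminf\mathcal{U}_\varepsilon f(x)$ follows from your correct normalization proof of $\limsup\mathcal{V}_\varepsilon f(x)\subset\mathcal{V}f(\bar x)$. Suppose $y\in\mathcal{U}f(\bar x)$ and $\lVert P_{\mathcal{V}_{\varepsilon_k}f(x_k)}y\rVert\ge\delta>0$ along a subsequence. Then the normalized projections cluster at some $v\in\mathcal{V}f(\bar x)$ with $\langle y,v\rangle\ge\delta$, which is impossible.

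\textbf{Comparison with the paper.} With these repairs your argument is a complete, elementary replacement for the paper's appeal to \cite[Theorem~4.32(a)]{Rockafellar1998} and \cite[Theorem~5.53(b)]{Rockafellar1998}. It shows that transversality enters only in $\limsup\mathcal{V}_\varepsilon f\subset\mathcal{V}f(\bar x)$ and in $\mathcal{U}f(\bar x)\subset\liminf\mathcal{U}_\varepsilon f$. Your use of the transported set also obtains the enlargement chain rule by pure linear algebra. That avoids requiring nearby $x$ to lie in $\mathcal{R}_{h,F}$, which Proposition~\ref{prop:epsVUchain}, as invoked in the paper's proof, would need.
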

\begin{proof}
The argument below uses the limit identities \eqref{eq:iscUehf} and \eqref{eq:outlvfe} and the transversality \eqref{eq:Meps-trans}.
Write \(A(x):=F'(x)\), \(C(x,\varepsilon):=\mathcal{U}_\varepsilon h(F(x))\) and \(S(x,\varepsilon):=\mathcal{V}_\varepsilon h(F(x))\), and set \(A:=A(\bar x)\), \(C:=\mathcal{U}h(F(\bar x))\), \(S:=C^\perp\). Proposition~\ref{prop:epsVUchain} gives \(\mathcal{U}_\varepsilon f(x)=A(x)^{-1}C(x,\varepsilon)\) and \(\mathcal{V}_\varepsilon f(x)=A(x)^\top S(x,\varepsilon)\). The exact spaces at \(\bar x\) are \(A^{-1}C\) and \(A^\top S\) by \(\bar x\in\mathcal{R}_{h,F}\) and \cite[Theorem~1]{Liu2026vu}.
Condition \eqref{eq:Meps-trans} is \(\ker A^\top\cap S=\{0\}\), equivalently \(\operatorname{Im} A+C=\mathbb{R}^m\), so \(\operatorname{Im} A\) cannot be separated from \(C\) (Lemma~\ref{prop:subspaces}). The maps here vary with \(x\): \(A(x)\to A\) because \(F\) is \(C^1\), while \(C(x,\varepsilon)\) and \(S(x,\varepsilon)\) have the inner and outer limits \eqref{eq:iscUehf}--\eqref{eq:outlvfe} by hypothesis. The inner limit is the inverse-image case of \cite[Theorem~4.32(a)]{Rockafellar1998}. The data are the subspaces \(C(x,\varepsilon)\), which are convex, and the linear maps \(A(x)\). Continuous convergence \(A(x)\to A\) is the \(C^1\) assumption on \(F\). Nonseparability of \(\operatorname{Im} A\) from \(C\) is \(\ker A^\top\cap C^\perp=\{0\}\), which is \eqref{eq:Meps-trans}. Theorem~4.32(a) therefore yields \eqref{eq:inscuex}.
For the outer limit, the subspaces \(\mathcal{V}_\varepsilon f(x)\) are the images of the converging linear maps \(F'(x)^\top\), so \cite[Theorem~5.53(b)]{Rockafellar1998} applies once four conditions are checked. First, \eqref{eq:outlvfe} is the outer limit of the value sets. Second, the horizon outer limit equals the horizon cone: since the sets are subspaces, Lemma~\ref{lem:horizonout} gives \(\limsup^\infty\mathcal{V}_\varepsilon h(F(x))=\limsup\mathcal{V}_\varepsilon h(F(x))\), and a subspace is a cone whose recession cone is itself \cite[Theorem~3.6]{Rockafellar1998}, so the horizon identity follows from \eqref{eq:outlvfe}. Third, the total graphical convergence \(F'(x)^\top\xrightarrow{t}F'(\bar x)^\top\) \cite[p.~185]{Rockafellar1998}: if \(v_k\to v\) and \(F'(x_k)^\top v_k\to w\), then \(w=F'(\bar x)^\top v\), which is graphical convergence \(F'(x)^\top\xrightarrow{g}F'(\bar x)^\top\), and the convexity of the graphs gives total graphical convergence by the criterion of \cite[Proposition~4.25]{Rockafellar1998} cited on p.~185. Fourth, the horizon qualification \(\bigl(F'(\bar x)^\top\bigr)^{\infty\,-1}(0)\cap\mathcal{V}h(F(\bar x))=\{0\}\) is \(\ker F'(\bar x)^\top\cap\mathcal{V}h(F(\bar x))=\{0\}\) \cite[p.~159]{Rockafellar1998}, which is \eqref{eq:Meps-trans}. This yields \eqref{eq:prop94}.
\end{proof}
Condition \eqref{eq:Meps-trans} is the nonseparability hypothesis of \cite[Theorem~4.32(a)]{Rockafellar1998} and the horizon qualification of \cite[Theorem~5.53(b)]{Rockafellar1998}. Membership of \(\bar x\) in \(\mathcal{R}_{h,F}\) gives only the exact identities \(\mathcal{V}f(\bar x)=F'(\bar x)^\top\mathcal{V}h(F(\bar x))\) and \(\mathcal{U}f(\bar x)=F'(\bar x)^{-1}\bigl(\mathcal{U}h(F(\bar x))\bigr)\); without \eqref{eq:Meps-trans} the \(\varepsilon\)-limits need not pass from \(h\) to \(f\).

The same transversality controls the approximate leaf.
\begin{theorem}[approximate active manifolds]\label{thm:Meps}
Let \(f=h\circ F\) with \(h\in\mathcal{H}\) and \(F:\mathbb{R}^n\to\mathbb{R}^m\) of class \(C^1\).
Write
\[
\mathcal{M}_\varepsilon(x)
:=
F^{-1}\bigl(F(x)+\mathcal{U}_\varepsilon h(F(x))\bigr),
\qquad
\mathcal{M}
:=
F^{-1}\bigl(\mathcal{U}h(F(\bar x))\bigr).
\]
The sets \(\mathcal{M}_\varepsilon(x)\) are considered for those \((x,\varepsilon)\) with \(\bar{\partial}_\varepsilon h(F(x))\neq\emptyset\).
Assume
\begin{enumerate}[label=\textnormal{(\alph*)}]
\item\label{it:Meps-c} \(\mathcal{U}h(F(\bar x))\subset\mathcal{U}_\varepsilon h(F(x))\) for all \((x,\varepsilon)\) sufficiently close to \((\bar x,0)\);
\item\label{it:Meps-d} \eqref{eq:Meps-trans} holds.
\end{enumerate}
Then there is a neighbourhood \(W\) of \(\bar x\) such that
\begin{enumerate}[label=\textnormal{(\roman*)}]
\item\label{it:Meps-isc}
every \(y^*\in\mathcal{M}\cap W\) lies in \(\liminf_{\substack{x\to\bar x\\\varepsilon\searrow 0}}\mathcal{M}_\varepsilon(x)\).
\end{enumerate}
If in addition \(\bar x\in\mathcal{R}_{h,F}\) in the sense of Assumption~\ref{as:2}, and \(\dim\mathcal{U}_\varepsilon h(F(x))=\dim\mathcal{U}h(F(\bar x))\) for all \((x,\varepsilon)\) sufficiently close to \((\bar x,0)\), then
\begin{enumerate}[label=\textnormal{(\roman*)}]
\setcounter{enumi}{1}
\item\label{it:Meps-dim}
\[
\mathcal{M}_\varepsilon(x)\cap W
\;\longrightarrow\;
\mathcal{M}\cap W
\]
in the Painlev\'e--Kuratowski sense, and
\[
T_{\mathcal{M}_\varepsilon(x)}(x)=\mathcal{U}_\varepsilon f(x)
\qquad\text{for all }x\in W\text{ with }x\in\mathcal{M}_\varepsilon(x).
\]
\end{enumerate}
\end{theorem}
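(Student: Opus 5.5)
The plan is to reduce everything to an implicit-function argument around points of \(\mathcal{M}\) near \(\bar x\). Set \(C:=\mathcal{U}h(F(\bar x))\) and \(S:=C^\perp=\mathcal{V}h(F(\bar x))\), and let \(P_S\) be the orthogonal projection onto \(S\). Then \(\mathcal{M}=\{y:P_SF(y)=0\}\), since \(F(\bar x)\in C\) by Proposition~\ref{prop:xinU} and \(C\) is a subspace.

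The transversality \eqref{eq:Meps-trans} says \(\operatorname{Im}F'(\bar x)+C=\mathbb{R}^m\). Applying \(P_S\), the linear map \(P_SF'(\bar x)\colon\mathbb{R}^n\to S\) is surjective. Since \(F\) is \(C^1\), surjectivity persists on a neighbourhood \(W\) of \(\bar x\). By the Lyusternik--Graves theorem (a metric regularity version of the implicit function theorem), \(G:=P_SF\) is metrically regular around \(\bar x\): for \(y\in W\) and \(z\in S\) small,
\[
\operatorname{dist}\bigl(y,G^{-1}(z)\bigr)\le\kappa\,\lVert G(y)-z\rVert .
\]
This \(W\) is the neighbourhood in the statement.

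For (i), fix \(y^*\in\mathcal{M}\cap W\) and a sequence \((x_k,\varepsilon_k)\to(\bar x,0)\) with \(\bar\partial_{\varepsilon_k}h(F(x_k))\neq\emptyset\). Hypothesis (a) gives \(C\subset C_k:=\mathcal{U}_{\varepsilon_k}h(F(x_k))\), so \(S_k:=C_k^\perp\subset S\). Membership \(y\in\mathcal{M}_{\varepsilon_k}(x_k)\) is equivalent to \(P_{S_k}(F(y)-F(x_k))=0\), and it is implied by the stronger condition \(P_S(F(y)-F(x_k))=0\), i.e.\ \(G(y)=G(x_k)\). Now \(G(x_k)\to G(\bar x)=0=G(y^*)\), so metric regularity gives \(y_k\) with \(G(y_k)=G(x_k)\) and \(\lVert y_k-y^*\rVert\le\kappa\lVert G(x_k)\rVert\to0\). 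Hence \(y_k\in\mathcal{M}_{\varepsilon_k}(x_k)\) and \(y_k\to y^*\), which proves the inner-limit claim. Only (a) and (b) are used here.

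For (ii), the dimension hypothesis together with the inclusion \(C\subset C_k\) forces \(C_k=C\) near \((\bar x,0)\). Then
\[
\mathcal{M}_\varepsilon(x)=\{y:G(y)=G(x)\},
\]
a level set of a submersion. The outer limit is immediate: if \(y_k\in\mathcal{M}_{\varepsilon_k}(x_k)\cap W\) and \(y_k\to y\), continuity gives \(G(y)=\lim G(x_k)=0\), so \(y\in\mathcal{M}\). Restricting to the closure of \(W\) if needed, this combines with (i) to give Painlev\'e--Kuratowski convergence.

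For the tangent-space claim, fix \(x\in W\) with \(x\in\mathcal{M}_\varepsilon(x)\). Since \(G'(x)=P_SF'(x)\) is surjective, the level set is a \(C^1\) manifold near \(x\) with
\[
T_{\mathcal{M}_\varepsilon(x)}(x)=\ker P_SF'(x)=F'(x)^{-1}(C)=F'(x)^{-1}\mathcal{U}_\varepsilon h(F(x)).
\]
The right-hand side equals \(\mathcal{U}_\varepsilon f(x)\) by Proposition~\ref{prop:epsVUchain}. Invoking that proposition needs \(x\in\mathcal{R}_{h,F}\), which is the role of Assumption~\ref{as:2}.

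\textbf{Main obstacle.} Assumption~\ref{as:2} is stated only at \(\bar x\), so it must be propagated to nearby \(x\). I would first try to avoid this entirely by computing \(\mathcal{U}_\varepsilon f(x)\) directly from the definition \(\bar\partial_\varepsilon f(x)=F'(x)^\top\bar\partial_\varepsilon h(F(x))\). The formula \(\mathcal{U}_\varepsilon f(x)=\{y:F'(x)y\in\mathcal{U}_\varepsilon h(F(x))\}\) is pure linear algebra: it says \(y\perp F'(x)^\top(\bar\partial_\varepsilon h-g)\) if and only if \(F'(x)y\perp(\bar\partial_\varepsilon h-g)\), and this holds without any chain rule. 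If that works, \(\mathcal{R}_{h,F}\) at \(\bar x\) is needed only to match \(\mathcal{U}f(\bar x)\), through the exact chain rule, with the limit leaf \(\mathcal{M}\).
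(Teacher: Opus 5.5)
Your argument is correct and follows the paper's skeleton. The nesting \(S_k\subset S\) makes the level set \(\{y:P_SF(y)=P_SF(x)\}\) a subset of \(\mathcal{M}_\varepsilon(x)\). Surjectivity of \(P_SF'(\bar x)\), which is \eqref{eq:Meps-trans}, gives the inner limit. The dimension identity forces \(\mathcal{U}_\varepsilon h(F(x))=C\), and continuity gives the outer limit. Your route departs from the paper at two points, both to your advantage.

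First, the paper applies the implicit-function theorem separately at each \(y^*\in\mathcal{M}\cap W\), with a right inverse depending on \(y^*\), and explicitly settles for pointwise membership. Your single application of Lyusternik--Graves at \(\bar x\) instead yields \(\operatorname{dist}(y^*,\mathcal{M}_\varepsilon(x))\le\kappa\lVert P_SF(x)\rVert\) uniformly in \(y^*\in\mathcal{M}\cap W\), which is stronger than (i).

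Second, for the tangent identity the paper cites Proposition~\ref{prop:epsVUchain}, ``which requires \(x\in\mathcal{R}_{h,F}\)'', at points \(x\in W\), although membership in \(\mathcal{R}_{h,F}\) is assumed only at \(\bar x\). Your fallback removes this gap, and it does work, so state it as a step rather than a hope. Since \(\bar\partial_\varepsilon f(x)\) is by definition \(F'(x)^\top\bar\partial_\varepsilon h(F(x))\), linearity of the span gives \(\mathcal{V}_\varepsilon f(x)=F'(x)^\top\mathcal{V}_\varepsilon h(F(x))\). The identity \(N_K(g^\circ)=(\operatorname{span}(K-g^\circ))^\perp\), valid for any nonempty convex \(K\) and \(g^\circ\in\ri K\), then gives \(\mathcal{U}_\varepsilon f(x)=\mathcal{V}_\varepsilon f(x)^\perp=F'(x)^{-1}\mathcal{U}_\varepsilon h(F(x))\) with no qualification condition. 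On your route the hypothesis \(\bar x\in\mathcal{R}_{h,F}\) is not used in (ii) at all.

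One detail you share with the paper needs a word. After passing to a closed \(W\), the inner inclusion at points of \(\mathcal{M}\cap\partial W\) is not covered by (i), since the approximants may leave \(W\). Take \(W\) to be a small closed ball. Then the squared distance to \(\bar x\) has no critical points on the \(C^1\) manifold \(\mathcal{M}\) near \(\bar x\) other than \(\bar x\), so \(\mathcal{M}\cap\operatorname{int}W\) is dense in \(\mathcal{M}\cap W\). Since inner limits are closed, this closes the boundary case.
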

\begin{proof}
Write \(U:=\mathcal{U}h(F(\bar x))\) and \(V:=U^\perp=\mathcal{V}h(F(\bar x))\).
Hypothesis \ref{it:Meps-c} gives \(U\subset\mathcal{U}_\varepsilon h(F(x))\) for every \((x,\varepsilon)\) near \((\bar x,0)\).
For any continuous \(F\) one has
\(\liminf F^{-1}(D_{x,\varepsilon})\subset F^{-1}(\liminf D_{x,\varepsilon})\)
\cite[Equation~4(8), p.~129]{Rockafellar1998}.

The exact affine preimage
\[
\mathcal{M}_0(x):=F^{-1}\bigl(F(x)+U\bigr)
\]
is therefore contained in \(\mathcal{M}_\varepsilon(x)\), and \(\mathcal{M}_0(\bar x)=\mathcal{M}\).
Define \(G:\mathbb{R}^n\times\mathbb{R}^n\to V\) by
\[
G(y;x)=P_V\bigl(F(y)-F(x)\bigr).
\]
The set \(\mathcal{M}_0(x)\) is the zero fibre of \(G(\,\cdot\,;x)\).
If \(y^*\in\mathcal{M}\) then \(G(y^*;\bar x)=P_VF(y^*)=0\).
The derivative \(D_yG(\bar x;\bar x)=P_VF'(\bar x)\) is surjective by \eqref{eq:Meps-trans}.
Since \(F\) is \(C^1\), the same derivative remains surjective at every \(y^*\in\mathcal{M}\) sufficiently close to \(\bar x\).
The implicit-function theorem therefore supplies, for each \(y^*\in\mathcal{M}\cap W\), a \(C^1\) correction
\(x\mapsto y(x)=y^*+R(y^*)\,v(x)\)
along a right inverse \(R(y^*)\) of \(P_VF'(y^*)\), with \(G(y(x);x)=0\) and \(y(x)\to y^*\) as \(x\to\bar x\).
The right inverse depends on \(y^*\), so this is pointwise membership in the inner limit, not a uniform chart on \(\mathcal{M}\cap W\).
Thus \(y^*\in\liminf_{x\to\bar x}\mathcal{M}_0(x)\subset\liminf\mathcal{M}_\varepsilon(x)\), which is \ref{it:Meps-isc}.

Now assume the dimension identity. Combined with \(U\subset\mathcal{U}_\varepsilon h(F(x))\) one obtains
\(\mathcal{U}_\varepsilon h(F(x))=U\), hence \(\mathcal{M}_\varepsilon(x)=\mathcal{M}_0(x)\) near \((\bar x,0)\).
The inner inclusion \(\mathcal{M}\cap W\subset\liminf(\mathcal{M}_\varepsilon(x)\cap W)\) is the pointwise membership already proved for \ref{it:Meps-isc}, restricted to \(W\).
For the outer inclusion, take \(y_k\in\mathcal{M}_\varepsilon(x_k)\cap W\) with \(x_k\to\bar x\), \(\varepsilon_k\searrow 0\) and \(y_k\to y\). For all large \(k\) the dimension identity gives \(\mathcal{M}_\varepsilon(x_k)=\mathcal{M}_0(x_k)\), so \(G(y_k;x_k)=0\). Continuity of \(F\) yields \(G(y;\bar x)=0\), hence \(y\in\mathcal{M}\). Since \(W\) may be taken closed, \(y\in W\). Thus \(\limsup(\mathcal{M}_\varepsilon(x)\cap W)\subset\mathcal{M}\cap W\).
The two inclusions are the Painlev\'e--Kuratowski convergence \(\mathcal{M}_\varepsilon(x)\cap W\to\mathcal{M}\cap W\).
The tangent space is
\[
T_{\mathcal{M}_\varepsilon(x)}(x)
=\{w:F'(x)w\in U\}
=\{w:F'(x)w\in\mathcal{U}_\varepsilon h(F(x))\}
=\mathcal{U}_\varepsilon f(x),
\]
where the last identity is \eqref{eq:uvefcom}, which requires \(x\in\mathcal{R}_{h,F}\).
\end{proof}
The reverse inclusion \(\liminf\mathcal{M}_\varepsilon(x)\subset\mathcal{M}\) is not claimed: the nesting gives only the inner limit of \(\mathcal{U}_\varepsilon h(F(x))\), and the outer limit is not assumed.
The transversality \eqref{eq:Meps-trans} is used as the surjectivity of \(P_VF'(\bar x)\) in the implicit-function step. Without it the zero fibre of \(G\) need not be a manifold, and the inner limit of \(\mathcal{M}_\varepsilon\) is not claimed.

The next example shows that the \(\varepsilon\)-limit need not pass from \(h\) to \(f\) when the transversality fails.
\begin{example}[transfer and the approximate leaf]\label{eg:transfer}
Let \(h=\lvert\cdot\rvert\) on \(\mathbb{R}\) and \(F:\mathbb{R}^2\to\mathbb{R}\), \(F(x_1,x_2)=x_1^2\).
Then \(F'(0)=0\), so \(\ker F'(0)^\top=\mathbb{R}=\mathcal{V}h(0)\) and \eqref{eq:Meps-trans} fails.
Here \(f(x)=x_1^2\) is smooth, so \(\mathcal{U}f(0)=\mathbb{R}^2\).
By Example~\ref{eg:1}, \(\partial_\varepsilon h(t)\) has positive length for every \(t\) and every \(\varepsilon>0\), so \(\operatorname{aff}\partial_\varepsilon h(t)=\mathbb{R}\) and \(\mathcal{U}_\varepsilon h(t)=\{0\}\) at every \(t\).
Hence \(\mathcal{U}_\varepsilon f(x)=\ker F'(x)\), which equals \(\operatorname{span}\{e_2\}\) when \(x_1\neq 0\) and \(\mathbb{R}^2\) when \(x_1=0\).
Thus
\[
\liminf_{\substack{x\to 0\\\varepsilon\searrow 0}}\mathcal{U}_\varepsilon f(x)=\operatorname{span}\{e_2\}\neq\mathbb{R}^2=\mathcal{U}f(0).
\]
The \(\varepsilon\)-limit does not pass from \(h\) to \(f\).
Membership of the origin in \(\mathcal{R}_{h,F}\) still gives the exact chain-rule identities \(\mathcal{V}f(0)=F'(0)^\top\mathcal{V}h(0)=\{0\}\) and \(\mathcal{U}f(0)=F'(0)^{-1}\bigl(\mathcal{U}h(0)\bigr)=\mathbb{R}^2\), by \cite[Theorem~1]{Liu2026vu}.
\end{example}
The next example computes the spaces for two composites: the unbiased layer, where the transversality holds and the limits pass, and max-pooling, where the transversality fails but the limit still holds.
\begin{example}[unbiased layer and max-pooling]\label{eg:layer}
Both constructions are composites \(h\circ F\) with \(h\in\mathcal{H}\) polyhedral.
\begin{enumerate}[label=\textnormal{(\roman*)}]
\item \emph{Unbiased ReLU layer.}
Let \(A\in\mathbb{R}^{m\times n}\) have full row rank and set \(F(x)=Ax\), \(h(y)=\sum_{i=1}^m\max\{y_i,0\}\).
Then \(h\in\mathcal{H}\) by separability, \(\partial h(0)=[0,1]^m\), \(\mathcal{U}h(0)=\{0\}\) and \(\mathcal{V}h(0)=\mathbb{R}^m\).
Full row rank means \(\ker A^\top=\{0\}\), so \eqref{eq:Meps-trans} holds at the origin.
The exact spaces are \(\mathcal{U}f(0)=A^{-1}(\{0\})=\ker A\) and \(\mathcal{V}f(0)=\operatorname{Im} A^\top\).
By Example~\ref{eg:relu} each factor \(\max\{y_i,0\}\) has \(\mathcal{U}_\varepsilon=\{0\}\) for every value and every \(\varepsilon>0\).
The enlargement of a separable sum is the product of the enlargements, so the identity of \cite[Lemma~3.4]{Liu2026vu} gives \(\mathcal{U}_\varepsilon h(y)=\{0\}^m\) for all \(y\) and all \(\varepsilon>0\).
Hence \(\mathcal{U}_\varepsilon f(x)=A^{-1}(\{0\})=\ker A=\mathcal{U}f(0)\), and the limits of Theorem~\ref{thm:eps-transfer} hold.
The leaf is \(\mathcal{M}=A^{-1}(\mathcal{U}h(0))=\ker A\), and \(\mathcal{M}_\varepsilon(x)=A^{-1}(Ax+\{0\})=x+\ker A\).
Every point of \(\mathcal{M}\) lies in \(\mathcal{M}_\varepsilon(x)\) for \(x\in\mathcal{M}\), which is the inner semicontinuity of Theorem~\ref{thm:Meps}.
\item \emph{Max-pooling.}
Let \(h(y)=\max\{y_1,y_2\}=\sigma_{\Delta_2}(y)\) and \(F(t)=(t,t)\).
At \(F(t)=(t,t)\) the two entries tie, so \(\partial h(F(t))=\operatorname{conv}\{e_1,e_2\}\) and \(\mathcal{V}h(F(t))=\operatorname{span}\{e_1-e_2\}\).
Here \(F'(t)=(1,1)^\top\), so \(\ker F'(t)^\top=\operatorname{span}\{e_1-e_2\}=\mathcal{V}h(F(t))\) and \eqref{eq:Meps-trans} fails at every \(t\).
The composite \(f(t)=t\) is smooth at every \(t\), so \(\mathcal{U}f(t)=\mathbb{R}\) for all \(t\).
For every \(t\) and every \(\varepsilon>0\), \(\langle s,(t,t)\rangle=t\) for all \(s\in\Delta_2\), hence \(\partial_\varepsilon h(t,t)=\Delta_2\) and \(\mathcal{U}_\varepsilon h(t,t)=\operatorname{span}\{e_1+e_2\}\).
Thus \(\mathcal{U}_\varepsilon f(t)=F'(t)^{-1}(\operatorname{span}\{e_1+e_2\})=\mathbb{R}=\mathcal{U}f(t)\) at every \(t\): the limit holds even though transversality fails.
Failure of \eqref{eq:Meps-trans} is necessary for the counterexample of Example~\ref{eg:transfer}, not sufficient.
\end{enumerate}
\end{example}

\subsection{Extreme-point enlargement}
One enlargement that meets Assumption~\ref{assu:1} is the extreme-point construction below. The polyhedral enlargement is the next subsection.

For \(h\in\mathcal{H}\) such that \(\partial h(0)\) contains no lines, we consider the following object
\begin{equation}\label{eq:defdeins}
\bar{\partial}_\varepsilon h(x)\mathrel{\mathop:}=\overline{\co }\mcb{s\in\ext\partial h(0)\colon \mang{s,x}\geq h(x)-\varepsilon}.
\end{equation}
Here \(\operatorname{ext}\partial h(0)\) follows \cite[Theorem~18.5]{Rockafellar1970}: when \(\partial h(0)\) contains no lines it is the set of extreme points; extreme rays are included when they exist.
\begin{proposition}\label{th:1}
Consider \(h\in\mathcal{H}\)  such that \(\partial h(0)\) contains no lines. Let \(\bar{\partial}_\varepsilon h(x)\) be defined as in \eqref{eq:defdeins} for \(x\in\mathbb{R}^n\) and \(\varepsilon\ge 0\). Then
\begin{gather} \label{eq:2}
\partial h(x)=\bar\partial_0h(x)\subseteq \bar{\partial}_\varepsilon h(x)\subseteq \partial_\varepsilon h(x) \ 
\\
\bar{\partial}_\varepsilon h(x)\subset\partial h(0)=\bar{\partial}_\varepsilon h(0)\ \label{eq:1}
\end{gather}
If \(\partial h(x)\neq\emptyset\), the first inclusion forces \(\bar{\partial}_\varepsilon h(x)\neq\emptyset\) and
\begin{gather}
\mathcal{V}h(x)\subseteq \mathcal{V}_\varepsilon h(x)\subset \mathcal{V}h(0), \ \mathcal{U}h(0)\subset \mathcal{U}_{\varepsilon}h(x)\subseteq \mathcal{U}h(x),\  
\notag
\end{gather}
If additionally \(u\in \mathcal{U}h(0)\), then \begin{gather}
\bar{\partial}_\varepsilon h(x+u)=\bar{\partial}_\varepsilon h(x),\label{eq:deee}\\   \mathcal{V}_\varepsilon h(u)= \mathcal{V}h(u), \ \mathcal{U}_\varepsilon h(u)= \mathcal{U}h(u),\ \label{eq:vuequivu}
\end{gather}
\end{proposition}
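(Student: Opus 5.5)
The plan is to derive every claim from two facts recorded in Proposition~\ref{prop:sub}: the face description \(\partial h(x)=F_{\partial h(0)}(x)\) from item~\ref{it:1-3}, and the slice description \(\partial_\varepsilon h(x)=\{s\in\partial h(0):\langle s,x\rangle\ge h(x)-\varepsilon\}\) from item~\ref{it:1-4}. These are combined with the Krein--Milman/Minkowski representation of a closed convex set containing no lines \cite[Theorem~18.5]{Rockafellar1970}.

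\textbf{Step 1: the inclusion \(\bar\partial_\varepsilon h(x)\subseteq\partial_\varepsilon h(x)\).} The generating set in \eqref{eq:defdeins} lies in the slice \(\partial_\varepsilon h(x)\). That slice is closed and convex, since it is the intersection of \(\partial h(0)\) with a closed half-space. Hence the closed convex hull stays inside it. Since \(\partial_\varepsilon h(x)\subset\partial h(0)\), this also gives \(\bar\partial_\varepsilon h(x)\subset\partial h(0)\).

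\textbf{Step 2: the case \(x=0\).} At \(x=0\) the constraint in \eqref{eq:defdeins} reads \(0\ge -\varepsilon\), so it is vacuous. Then \(\bar\partial_\varepsilon h(0)=\overline{\co}\,\ext\partial h(0)=\partial h(0)\) by Theorem~18.5, with extreme directions included as the paper's convention prescribes. This gives \eqref{eq:1}.

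\textbf{Step 3: the identity \(\bar\partial_0h(x)=\partial h(x)\).} For \(\varepsilon=0\) the generating set is \(\ext\partial h(0)\cap F_{\partial h(0)}(x)\). A face of a line-free closed convex set is itself line-free and closed. Its extreme points (and extreme directions) are exactly the extreme points (directions) of \(\partial h(0)\) lying in the face. So Theorem~18.5 applied to the face \(\partial h(x)\) gives the identity. The inclusion \(\partial h(x)\subseteq\bar\partial_\varepsilon h(x)\) then follows from monotonicity of the generating set in \(\varepsilon\).

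\textbf{Main obstacle: extreme directions.} The delicate point is the treatment of extreme directions. A direction \(d\) in the face must satisfy \(\langle d,x\rangle=0\), and it must be added in the recession sense rather than as a point satisfying the inequality. I would handle this by reading \eqref{eq:defdeins} as the convex hull of the admissible extreme points plus the cone of admissible extreme directions (those with \(\langle d,x\rangle\ge0\), which forces \(=0\) when \(h(x)<\infty\)). I would then check Steps 1--3 with this reading.

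\textbf{Step 4: the subspace chain.} Once \(\partial h(x)\neq\emptyset\), the sandwich \(\partial h(x)\subseteq\bar\partial_\varepsilon h(x)\subset\partial h(0)\) holds. Taking spans of differences, with a common base point \(g\in\partial h(x)\), gives \(\mathcal V h(x)\subseteq\mathcal V_\varepsilon h(x)\subset\mathcal V h(0)\). Orthogonal complements via \eqref{eq:4} then give the \(\mathcal U\)-chain.

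\textbf{Step 5: invariance under \(u\in\mathcal U h(0)\).} Proposition~\ref{prop:2}, via \eqref{eq:hxu}, gives \(h(x+u)=h(x)+\langle g_0,u\rangle\) for all \(g_0\in\partial h(0)\). Every \(s\in\partial h(0)\) satisfies \(\langle s,u\rangle=\langle g_0,u\rangle\), because \(s-g_0\in\mathcal V h(0)\perp u\). Therefore \(\langle s,x+u\rangle-h(x+u)=\langle s,x\rangle-h(x)\). So the generating sets at \(x\) and at \(x+u\) coincide, which is \eqref{eq:deee}.

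\textbf{Step 6: conclusion \eqref{eq:vuequivu}.} Set \(x=0\) in \eqref{eq:deee} and use \eqref{eq:1} to get \(\bar\partial_\varepsilon h(u)=\partial h(0)\). By \eqref{eq:ve}, \(\partial h(0)=\partial h(u)\). Taking spans then gives \(\mathcal V_\varepsilon h(u)=\mathcal V h(u)\), and \(\mathcal U_\varepsilon h(u)=\mathcal U h(u)\) follows by taking orthogonal complements.
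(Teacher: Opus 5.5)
Your proposal is correct and follows the paper's route. You use Theorem~18.5 of \cite{Rockafellar1970} for \(\bar\partial_0h(x)=\partial h(x)\), applying it to the face \(F_{\partial h(0)}(x)\) itself, whereas the paper decomposes \(s\in\partial h(x)\) over \(\ext\partial h(0)\) and shows each generator exposes \(x\); like the paper, you then use the slice formula of Proposition~\ref{prop:sub}\ref{it:1-4} for the sandwich and \eqref{eq:hxu} for the invariance under \(u\in\mathcal{U}h(0)\). Your explicit handling of extreme directions is more careful than the paper's proof, which uses convex combinations of extreme points only and so tacitly treats \(\partial h(0)\) as bounded. One adjustment is needed: an extreme direction \(d\) of \(\partial h(0)\) with \(\langle d,x\rangle=0\) need not be an extreme direction of the face, so your ``exactly'' in Step~3 is too strong. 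It is, however, a recession direction of the face, and \(\langle d,u\rangle=0\) because \(d\in\mathcal{V}h(0)\); these two facts are all that Steps~3 and~5 require.
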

\begin{proof}
As \(\partial h(0)\) contains no lines, \cite[Theorem 18.5]{Rockafellar1970} gives \(\partial h(0)=\co\ext\partial h(0)\). For \(s\in\partial h(x)\), write \(s=\sum\alpha_is_i\) with extreme points \(s_i\) and weights summing to one. If some \(s_j\) fails to expose \(x\), then \(\mang{s_j,x}<h(x)\), and the convex combination gives \(\mang{s,x}<h(x)\), a contradiction. Hence every extreme point in the combination exposes \(x\), so \(s\in\bar\partial_0h(x)\). The reverse inclusion is the definition. The sandwich \(\bar\partial_0h\subset\bar{\partial}_\varepsilon h\subset\partial_\varepsilon h\) is the definition together with Proposition~\ref{prop:sub}\ref{it:1-4}, and \(\bar{\partial}_\varepsilon h(x)\subset\partial h(0)\) is Proposition~\ref{prop:sub}\ref{it:1-5}. At \(x=0\), \(\bar{\partial}_\varepsilon h(0)=\partial h(0)\) by the same theorem. The nesting of the \(\mathcal{V}\) and \(\mathcal{U}\) spaces is the nesting of the sets in \eqref{eq:2} and \eqref{eq:1}. For \eqref{eq:deee}, the identity \(h(x+u)=h(x)+\mang{g_0,u}\) of \eqref{eq:hxu} and \(s-g_0\in\mathcal{V}h(0)\) give \(\mang{s,x+u}=\mang{s,x}+\mang{g_0,u}\), so the exposing inequality is unchanged and the enlargement is invariant. Setting \(x=0\) gives \eqref{eq:vuequivu}.
\end{proof} 
It is clear from Theorem \ref{th:1} that the \(\bar{\partial}_\varepsilon h(x)\) satisfies Assumption \ref{assu:1}\,. 
The next four examples compute the extreme-point enlargement for the functions of Examples~\ref{eg:1},~\ref{eg:relu} and~\ref{eg:subli}.
\begin{example}[Example~\ref{eg:1} continued]
	\[\bar{\partial}_\varepsilon h(x)=\begin{cases}
		\mcb{-1}&\text{ if }x<-\varepsilon/2,\\
		[-1,1] &\text{ if }-\varepsilon/2\leq x\leq \varepsilon/2,\\
		\{1\} &\text{ if }x>\varepsilon/2.
	\end{cases}\]
	From the expression of \(\bar{\partial}_\varepsilon h(x)\)  
	we can use an argument similar to that in Example \ref{eg:1} to show that  \(\liminf _{\substack{x\to 0\\\varepsilon\searrow 0}}\bar{\partial}_\varepsilon h(x)\not=\partial h(0)\). 
\end{example}
\begin{example}[Example~\ref{eg:relu} continued]\label{eg:relu-bar}
The set \(\partial h(0)=[0,1]\) has extreme points \(\{0,1\}\). Definition \eqref{eq:defdeins} therefore gives
\[
\bar{\partial}_\varepsilon h(t)
=
\overline{\operatorname{co}}\bigl\{s\in\{0,1\}:st\ge h(t)-\varepsilon\bigr\}
=
\begin{cases}
\{1\} & t>\varepsilon,\\
[0,1] & \lvert t\rvert\le\varepsilon,\\
\{0\} & t<-\varepsilon.
\end{cases}
\]
Hence \(\mathcal{U}_\varepsilon h(t)=\mathbb{R}\) on each open ray \(\lvert t\rvert>\varepsilon\), while \(\mathcal{U}_\varepsilon h(t)=\{0\}=\mathcal{U}h(0)\) throughout the strip \(\lvert t\rvert\le\varepsilon\). The extreme-point enlargement, unlike \(\partial_\varepsilon\), does identify the two linear pieces once \(\lvert t\rvert>\varepsilon\). Inner semicontinuity of \(\bar{\partial}_\varepsilon h\) as \((t,\varepsilon)\to(0,0)\) still fails, by the same sequences as in Example~\ref{eg:1}.
\end{example}
\begin{example}[Example~\ref{eg:subli} continued]
	For \(h(X)=\lambda_1(X)\) the extreme-point enlargement is \(\bar{\partial}_\varepsilon\lambda_1(X)=\co\{dd^T:\|d\|=1,\ d^TXd\geq\lambda_1(X)-\varepsilon\}\). The inner approximation of \cite{Oustry20001} is contained in it \cite[Section~6.2.3]{Liu2019Subdifferential}.
\end{example}
\begin{example}[Example~\ref{eg:subli} continued] 
	\begin{enumerate}[label=\textnormal{(\roman*)}]
		\item 
		For \(h(x)=\sigma_{\Delta^n}(x)\) the exact and \(\varepsilon\)-active index sets are
		\begin{equation}\label{eq:Ieps}
			I(x)=\mcb{i\in I\colon x_i=h(x)},\qquad
			I_{\varepsilon}(x)=\mcb{i\in I\colon x_i\geq h(x)-\varepsilon}.
		\end{equation} 
		Then
		\begin{align*}
		\partial_{\varepsilon}h(x)
		&=\mcb{s\in\Delta^n\colon\mang{s,x}\geq\max_{i\in I}x_i-\varepsilon}\\
		&=\mcb{s\in\Delta^n\colon \sum_{i\in I}s_i\mbk{x_i-\max_{i\in I}x_i+\varepsilon}\geq0},
		\end{align*}
		and \(\bar{\partial}_\varepsilon h(x)=\co \mcb{e_i\colon i\in I_\varepsilon(x)}\), where \(I_\varepsilon\) is defined in \eqref{eq:Ieps}. 
		\item {\(h(x)=\norm{x}=\sigma_{\mathbb{B}_1}(x)\)}
\begin{align*}
\partial_{\varepsilon}h(x)&=\mcb{s\in\mathbb{R}^n\colon\normo{s}\leq 1,\ \mang{s,x}\geq \norm{x}-\varepsilon}\,,\\
\bar{\partial}_\varepsilon h(x)&=\co  \mcb{s\in\mathbb{R}^n\colon s_i=\pm1,\ s_j=0,\ j\not=i,\ i=1,\cdots,n,}\\
&\hspace{2em}\cap\mcb{s\in\mathbb{R}^n\colon\mang{s,x}\geq \norm{x}-\varepsilon}\,,
\end{align*}
	\end{enumerate}
\end{example}
The extreme-point enlargement meets Assumption~\ref{assu:1}, so the two transfer theorems apply to it directly.
\begin{corollary}[extreme-point enlargement]\label{cor:ext-transfer}
Let \(h\in\mathcal{H}\) with \(\partial h(0)\) containing no lines, and let \(\bar\partial_\varepsilon h\) be the enlargement \eqref{eq:defdeins}.
Let \(F:\mathbb{R}^n\to\mathbb{R}^m\) be of class \(C^1\) and \(f=h\circ F\).
Assume
\begin{enumerate}[label=\textnormal{(\alph*)}]
\item \(F(\bar x)\in\mathcal{U}h(0)\);
\item \eqref{eq:Meps-trans} holds.
\end{enumerate}
Then, writing \(\mathcal{M}\) and \(\mathcal{M}_\varepsilon(x)\) as in Theorem~\ref{thm:Meps}, there is a neighbourhood \(W\) of \(\bar x\) such that
\[
\mathcal{M}\cap W\subset\liminf_{\substack{x\to\bar x\\\varepsilon\searrow 0}}\mathcal{M}_\varepsilon(x).
\]
If in addition \(\bar x\in\mathcal{R}_{h,F}\), then the limits of Theorem~\ref{thm:eps-transfer} hold:
\[
\liminf_{\substack{x\to\bar x\\\varepsilon\searrow 0}}\mathcal{U}_\varepsilon f(x)=\mathcal{U}f(\bar x),\qquad
\limsup_{\substack{x\to\bar x\\\varepsilon\searrow 0}}\mathcal{V}_\varepsilon f(x)=\mathcal{V}f(\bar x).
\]
If moreover \(\dim\mathcal{U}_\varepsilon h(F(x))=\dim\mathcal{U}h(F(\bar x))\) near \((\bar x,0)\), then
\[
\mathcal{M}_\varepsilon(x)\cap W\to\mathcal{M}\cap W,\qquad T_{\mathcal{M}_\varepsilon(x)}(x)=\mathcal{U}_\varepsilon f(x).
\]
\end{corollary}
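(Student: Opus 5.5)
The plan is to reduce the corollary to Theorems~\ref{thm:Meps} and~\ref{thm:eps-transfer}, using Proposition~\ref{th:1} and Proposition~\ref{prop:vuesub} to verify their hypotheses. Set \(u:=F(\bar x)\in\mathcal{U}h(0)\) by (a). Proposition~\ref{prop:3} gives \(\partial h(u)=\partial h(0)\neq\emptyset\) and \(\mathcal{U}h(u)=\mathcal{U}h(0)\), \(\mathcal{V}h(u)=\mathcal{V}h(0)\). Hence \(\mathcal{U}h(F(\bar x))=\mathcal{U}h(0)\), and the transversality \eqref{eq:Meps-trans} in (b) reads \(\ker F'(\bar x)^\top\cap\mathcal{V}h(0)=\{0\}\).

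\emph{Hypothesis (a) of Theorem~\ref{thm:Meps}.} By Proposition~\ref{th:1}, \eqref{eq:defdeins} satisfies Assumption~\ref{assu:1}: it is convex, it lies in \(\partial h(0)\) by \eqref{eq:1}, and it equals \(\partial h(u)\) at \(\varepsilon=0\) by \eqref{eq:2}. Proposition~\ref{prop:vuesub} then gives \(\mathcal{U}h(0)\subset\mathcal{U}_\varepsilon h(y)\) for every \((y,\varepsilon)\) with \(\bar\partial_\varepsilon h(y)\neq\emptyset\). Taking \(y=F(x)\), this holds for all \((x,\varepsilon)\), not only near \((\bar x,0)\), since the sets \(\mathcal{M}_\varepsilon(x)\) are considered only where the enlargement is nonempty. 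Together with \(\mathcal{U}h(0)=\mathcal{U}h(F(\bar x))\), this is hypothesis~\ref{it:Meps-c}. Hypothesis~\ref{it:Meps-d} is (b). Theorem~\ref{thm:Meps}\ref{it:Meps-isc} therefore yields the first inclusion with some neighbourhood \(W\).

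\emph{Limits of Theorem~\ref{thm:eps-transfer}.} Under \(\bar x\in\mathcal{R}_{h,F}\), I must verify \eqref{eq:iscUehf} and \eqref{eq:outlvfe}. By continuity of \(F\), \(x\to\bar x\) gives \(F(x)\to u\). Proposition~\ref{prop:vuesub}, namely \eqref{eq:scin} and \eqref{eq:sub-con} at the point \(u\in\mathcal{U}h(0)\), gives the inner limit of \(\mathcal{U}_\varepsilon h(y)\) and the outer limit of \(\mathcal{V}_\varepsilon h(y)\) as \(y\to u\), \(\varepsilon\searrow0\).

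The main obstacle is passing from limits in \(y\) to limits along \(y=F(x)\). Restricting \(y\) to the image of \(F\) could in principle change a Painlev\'e--Kuratowski limit. I would avoid the composition altogether by using the uniform nesting directly, as in Proposition~\ref{prop:basicSemiC}. For all relevant \((x,\varepsilon)\) we have \(\mathcal{U}h(0)\subset\mathcal{U}_\varepsilon h(F(x))\) and \(\mathcal{V}_\varepsilon h(F(x))\subset\mathcal{V}h(0)\), and both bounding sets are constant and closed. Constant selections then give \(\liminf\supset\mathcal{U}h(0)\). Closedness gives \(\limsup\subset\mathcal{V}h(0)\). The reverse inclusions come from evaluating at \((\bar x,0)\): there \(\bar\partial_0h(u)=\partial h(0)\), so \(\mathcal{U}_0h(u)=\mathcal{U}h(0)\) and \(\mathcal{V}_0h(u)=\mathcal{V}h(0)\). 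The outer limit contains this value, and the inner limit is contained in it. If the limit excludes the point \((\bar x,0)\) itself, I would instead use \((\bar x,\varepsilon)\) with \(\varepsilon>0\). There \eqref{eq:vuequivu} gives \(\mathcal{U}_\varepsilon h(u)=\mathcal{U}h(0)\) and \(\mathcal{V}_\varepsilon h(u)=\mathcal{V}h(0)\), so the sequence \((\bar x,\varepsilon_k)\) gives the same reverse inclusions. This yields \eqref{eq:iscUehf} and \eqref{eq:outlvfe} with right-hand sides \(\mathcal{U}h(F(\bar x))\) and \(\mathcal{V}h(F(\bar x))\). Theorem~\ref{thm:eps-transfer} then yields the two displayed limits.

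\emph{Dimension statement.} The last assertion is Theorem~\ref{thm:Meps}\ref{it:Meps-dim}. Its extra hypotheses, \(\bar x\in\mathcal{R}_{h,F}\) and the dimension identity, are exactly those assumed in the corollary, and \(W\) is the same neighbourhood as before.
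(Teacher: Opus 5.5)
Your proposal is correct and follows the paper's proof. You verify Assumption~\ref{assu:1} for \eqref{eq:defdeins} via Proposition~\ref{th:1}, obtain hypothesis~\ref{it:Meps-c} from the nesting together with $\mathcal{U}h(F(\bar x))=\mathcal{U}h(0)$, and then invoke Theorems~\ref{thm:Meps} and~\ref{thm:eps-transfer}. Your explicit treatment of the limits along $y=F(x)$, using the constant bounds and the sequence $(\bar x,\varepsilon_k)$ through \eqref{eq:vuequivu}, spells out a step the paper compresses into ``the same nesting and Proposition~\ref{prop:vuesub}''.
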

\begin{proof}
Proposition~\ref{th:1} gives the nesting \(\mathcal{U}h(0)\subset\mathcal{U}_\varepsilon h(x)\). Since \(F(\bar x)\in\mathcal{U}h(0)\), this is hypothesis \ref{it:Meps-c} of Theorem~\ref{thm:Meps}. Together with \eqref{eq:Meps-trans}, that theorem's first conclusion gives the inner semicontinuity. The same nesting and Proposition~\ref{prop:vuesub} supply the limit identities of Theorem~\ref{thm:eps-transfer}: since \(F(x)\to F(\bar x)\in\mathcal{U}h(0)\), that proposition applies at \(F(\bar x)\). The theorem requires \(\bar x\in\mathcal{R}_{h,F}\) and gives the two limits. The dimension identity and \(\bar x\in\mathcal{R}_{h,F}\) give the Painlev\'e--Kuratowski convergence and the tangent identity by the second conclusion of Theorem~\ref{thm:Meps}.
\end{proof}
The next example records that the unbiased layer meets the hypotheses of the corollary.
\begin{example}[Example~\ref{eg:layer} continued]\label{eg:layer-ext}
For the unbiased layer of Example~\ref{eg:layer}, \(\partial h(0)=[0,1]^m\) contains no lines, so the extreme-point enlargement \eqref{eq:defdeins} is available. Full row rank gives \eqref{eq:Meps-trans} at the origin, and \(F(0)=0\in\mathcal{U}h(0)\). The computation of that example gives \(\mathcal{U}_\varepsilon f(x)=\ker A=\mathcal{U}f(0)\) for all \(x\) and all \(\varepsilon>0\), so both limits of Corollary~\ref{cor:ext-transfer} hold as equalities, and the leaf identity \(\mathcal{M}_\varepsilon(x)=x+\ker A\) is the inner semicontinuity.
\end{example}

\subsection{Polyhedral enlargement}
The extreme-point enlargement requires \(\partial h(0)\) to contain no lines. For polyhedral \(h\in\mathcal{H}\) the enlargement is built from the active sets of the representation \eqref{eq:reprPolyh}.
To construct the subdifferential enlargement we first consider the $\varepsilon$-activity index set:
\begin{equation}\label{ind:epspoly}
I_\varepsilon(x)=\mcb{i\in I\colon \mang{a^i,x}\geq h(x)-\varepsilon}\,. 
\end{equation}
In view of the expression of \(\partial h(x)\) in \eqref{eq:polyex} we define the associated enlargement as follows 
\begin{equation}\label{eq:delpoly}
\bar{\partial}_\varepsilon h(x)\mathrel{\mathop:}=\operatorname{co}\{a^i:i\in I_\varepsilon(x)\}+\operatorname{pos}\{c^j:j\in J(x)\},
\end{equation}
with the convention \(\operatorname{co}\emptyset=\{0\}\). If \(I_\varepsilon(x)\neq\emptyset\) this is
\[
\left\{\sum_{i\in I_\varepsilon(x)}\alpha_ia^i+\sum_{j\in J(x)}\beta_jc^j
\colon\sum_{i\in I_\varepsilon(x)}\alpha_i =1,\ 
 \alpha_i\geq 0,\  \beta_j\geq0
\right\}.
\]
The enlargement meets Assumption~\ref{assu:1}.
\begin{theorem}[polyhedral enlargement]\label{thm:poly-enlarg}
Consider a polyhedral function \(h\in \mathcal{H}\) represented in the form of \eqref{eq:reprPolyh}\,. 
\begin{enumerate}[label=\textnormal{(\roman*)}]
\item Its subdifferential enlargement defined in \eqref{eq:delpoly}  satisfies 
\begin{equation}\label{eq:delepspo}
\partial h(x)\subset\bar{\partial}_\varepsilon h(x)\subset\partial_\varepsilon h(x)\,,
\end{equation} 
and therefore it satisfies Assumption \ref{assu:1}\,.
\item for any \(\bar{x}\in\dom h\), when \(x\) is sufficiently close to \(\bar{x}\) and \(\varepsilon\) is small enough we have \(\bar{\partial}_\varepsilon h(x)\subset \partial h(\bar{x})\) 
\item The subspace \(\mathcal{V}_\varepsilon(x)\)  associated with \(\bar{\partial}_\varepsilon h(x)\) can be explicitly 
represented as follows. 
\begin{gather}
\mathcal{V}_\varepsilon h(x)=\mcb{\sum_{i\in I_\varepsilon(x)}\alpha_ia^i+
\sum_{j\in J(x)}\beta_jc^j\colon\sum_{i\in I_\varepsilon(x)}\alpha_i=0,\;
\beta_j\in\RR}\label{eq:polyveps}\,.
\end{gather}\end{enumerate}
\end{theorem}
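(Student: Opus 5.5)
The plan is to prove each of the three items directly from the representation \eqref{eq:reprPolyh}, the formula \eqref{eq:polyex} for \(\partial h(x)\), and the description of \(\partial_\varepsilon h\) in Proposition~\ref{prop:sub}\ref{it:1-4}. Throughout, \(x\in\dom h\), so that \(\mang{c^j,x}\le 0\) for all \(j\in J\).

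\emph{Item (i).} The first inclusion is immediate: \(I(x)\subset I_\varepsilon(x)\) because \(\mang{a^i,x}=h(x)\ge h(x)-\varepsilon\). Comparing \eqref{eq:polyex} with \eqref{eq:delpoly} then gives \(\partial h(x)\subset\bar\partial_\varepsilon h(x)\).

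For the second inclusion, take \(s=\sum_{i\in I_\varepsilon(x)}\alpha_ia^i+\sum_{j\in J(x)}\beta_jc^j\).
\begin{itemize}
\item Since \(\partial h(0)=\co\{a^i\}+\pos\{c^j\}\), we have \(s\in\partial h(0)\).
\item Moreover \(\mang{s,x}=\sum\alpha_i\mang{a^i,x}+0\ge h(x)-\varepsilon\), using \(\mang{c^j,x}=0\) for \(j\in J(x)\) and \(\sum\alpha_i=1\).
\end{itemize}
By Proposition~\ref{prop:sub}\ref{it:1-4}, \(s\in\partial_\varepsilon h(x)\). The degenerate case \(I_\varepsilon(x)=\emptyset\) occurs only when \(I=\emptyset\), and then the convention \(\co\emptyset=\{0\}\) handles it.

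For Assumption~\ref{assu:1}: convexity is clear, and containment in \(\partial h(0)\) follows from Proposition~\ref{prop:sub}\ref{it:1-5}. At \(\varepsilon=0\) one has \(I_0(x)=I(x)\), so \(\bar\partial_0h=\partial h\) everywhere, in particular on \(\mathcal{U}h(0)\). Outside \(\dom h\) the enlargement may need to be declared empty; I will note this convention.

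\emph{Item (ii).} This is the step needing care, a uniform choice of neighbourhood and \(\varepsilon\). Set
\[
\delta:=\min_{i\notin I(\bar x)}\bigl(h(\bar x)-\mang{a^i,\bar x}\bigr)>0
\]
(take \(\delta=+\infty\) if the index set is empty). The function \(h\) is continuous relative to \(\dom h\) near \(\bar x\), being a finite max of linear functions there, and each \(x\mapsto\mang{a^i,x}\) is continuous. So for \(x\in\dom h\) close to \(\bar x\) and \(\varepsilon<\delta/3\), every \(i\notin I(\bar x)\) satisfies \(\mang{a^i,x}<h(x)-\varepsilon\). Hence \(I_\varepsilon(x)\subset I(\bar x)\).

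By the argument of Proposition~\ref{prop:polyheiHp}, \(J(x)\subset J(\bar x)\) for \(x\) near \(\bar x\). Formula \eqref{eq:polyex} at \(\bar x\) then gives \(\bar\partial_\varepsilon h(x)\subset\partial h(\bar x)\). One subtlety: when \(I_\varepsilon(x)\ne\emptyset\) the convex-hull part uses indices in \(I(\bar x)\), so \(I(\bar x)\neq\emptyset\) is automatic whenever \(I\neq\emptyset\).

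\emph{Item (iii).} This is linear algebra via Definition~\ref{def:epsVU}. Fix \(g=a^{i_0}\) with \(i_0\in I_\varepsilon(x)\). For \(s\in\bar\partial_\varepsilon h(x)\), the difference \(s-g\) is a combination of the \(a^i\) with coefficients summing to \(0\), plus nonnegative multiples of the \(c^j\). So \(\spn(\bar\partial_\varepsilon h(x)-g)\) is contained in the right side of \eqref{eq:polyveps}.

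For the reverse inclusion:
\begin{itemize}
\item \(a^i-a^{i_0}\in\bar\partial_\varepsilon h(x)-g\) for each \(i\in I_\varepsilon(x)\).
\item \(c^j=(g+c^j)-g\in\bar\partial_\varepsilon h(x)-g\) for each \(j\in J(x)\).
\end{itemize}
Taking arbitrary real linear combinations of these vectors, with any sign on \(\beta_j\), produces every element of the set in \eqref{eq:polyveps}. That set is already a subspace, so equality holds. If \(I_\varepsilon(x)=\emptyset\), take \(g=0\); the \(\alpha\)-part is then absent and the set reduces to \(\spn\{c^j:j\in J(x)\}\).
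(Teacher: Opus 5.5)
Your proposal is correct and follows the paper's proof essentially step by step: the inclusion $I(x)\subset I_\varepsilon(x)$, the description of $\partial_\varepsilon h$ from Proposition~\ref{prop:sub}\ref{it:1-4}, the positive-slack continuity argument giving $I_\varepsilon(x)\subset I(\bar x)$, and direct linear algebra for $\mathcal{V}_\varepsilon h$. Your explicit step $J(x)\subset J(\bar x)$, your handling of the empty index sets, and your remark that the enlargement should be declared empty off $\dom h$ supply details the paper leaves implicit; the last point matters because for $x\notin\dom h$ the literal formula \eqref{eq:delpoly} with $\operatorname{co}\emptyset=\{0\}$ gives a set containing $0$, which need not lie in $\partial h(0)$.
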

\begin{proof}
Obviously \(I(x)\subseteq I_{\varepsilon}(x)\)\,. 
From the expressions of \(\partial  h(x)\) and \(\bar{\partial}_\varepsilon h(x)\) in \eqref{eq:polyex} and \eqref{eq:delpoly} we immediately get \(\partial  h(x)\subseteq\bar{\partial}_\varepsilon h(x)\)\,. 
To see the second half of \eqref{eq:delepspo}, notice that
\[
\partial h(0)=\left\{\sum_{i\in I}\alpha_ia^i+\sum_{j\in J}\beta_jc^j
\colon\sum_{i\in I}\alpha_i =1,\ 
 \alpha_i\geq 0,\  \beta_j\geq0
\right\}
\]
and \(\sum_{i\in I_\varepsilon(x)}\alpha_ia^i+\sum_{j\in J(x)}\beta_jc^j\in \partial h(0)\)\,.  
We can write \(\partial_\varepsilon h(x)=\mcb{s\in\partial h(0)\colon \mang{s,x}\geq h(x)-\varepsilon}\) (see Proposition \ref{prop:sub}\ref{it:1-4}). Then from the definition of \(I_\varepsilon(x)\), \(J(x)\) and \(\bar{\partial}_\varepsilon h(x)\) we can see 
\begin{gather}
 \mang{\sum_{i\in I_\varepsilon(x)}\alpha_ia^i+\sum_{j\in J(x)}\beta_jc^j,x}=\sum_{i\in I_\varepsilon(x)}\alpha_i\mang{a^i,x}\\\geq\sum_{i\in I_\varepsilon(x)}\alpha_i (h(x)-\varepsilon)=h(x)-\varepsilon.
 \end{gather}
 Therefore \(\sum_{i\in I_\varepsilon(x)}\alpha_ia^i+\sum_{j\in J(x)}\beta_jc^j\in \partial_\varepsilon h(x)\) and \(\bar{\partial}_\varepsilon h(x)\subset\partial_\varepsilon h(x)\)\,. 

Consider \(h\) as a max-function of the form \(f (x) = \max_{i\in I} f_i(x) +\delta_C\)  where \(f_i(x)=\mang{a^i,x}\) and \(C=\mcb{y\in \mathbb{R}^n\colon\mang{c^j,x}\leq 0,j\in J}\). Fix \(\bar x\in C\). For each \(i\notin I(\bar x)\) the slack \(h(\bar x)-\mang{a^i,\bar x}\) is strictly positive. By continuity of the finitely many linear forms, there exist \(\bar\varepsilon>0\) and a neighbourhood of \(\bar x\) on which, for every such \(i\) and every \(\varepsilon\in[0,\bar\varepsilon]\), \(\mang{a^i,x}<h(x)-\varepsilon\), so \(i\notin I_\varepsilon(x)\). Hence \(I_\varepsilon(x)\subset I(\bar x)\) for all \(x\) in that neighbourhood and all \(\varepsilon\in[0,\bar\varepsilon]\). It follows from the expressions of \(\partial h(x)\) and \(\bar{\partial}_\varepsilon h(x)\) in \eqref{eq:polyex} and \eqref{eq:delpoly} respectively that \(\bar{\partial}_\varepsilon h(x)\subset \partial h(\bar{x})\).  

 We can easily verify that \( \bar{\partial}_\varepsilon h(x) \) is convex. 
 From Definition \ref{def:epsVU} we have \(\mathcal{V}_\varepsilon(x)=\spn (\bar{\partial}_\varepsilon h(x)-g)\) for an arbitrary \(g\in \bar{\partial}_\varepsilon h(x)
\)\,.  Then simple calculations just give 
 \eqref{eq:polyveps}\,.
\end{proof}

The next example computes the polyhedral enlargement for \(h=\max\{x_1,x_2\}\).
\begin{example}[polyhedral enlargement]\label{eg:delpoly}
Take \(h(x)=\max\{x_1,x_2\}\) on \(\mathbb{R}^2\). Then \(I=\{1,2\}\), \(J=\emptyset\), \(a^1=(1,0)\), \(a^2=(0,1)\), and
\[
\partial h(0)=\operatorname{co}\{(1,0),(0,1)\}.
\]
At \(x=(1,0)\) one has \(h(x)=1\) and \(I(x)=\{1\}\), so \(\partial h(x)=\{(1,0)\}\), \(\mathcal{V}h(x)=\{0\}\) and \(\mathcal{U}h(x)=\mathbb{R}^2\). For \(\varepsilon\in(0,1)\),
\[
I_\varepsilon(x)=\{1\},\qquad
\bar\partial_\varepsilon h(x)=\{(1,0)\},
\]
while for \(\varepsilon\ge 1\) one has \(I_\varepsilon(x)=\{1,2\}\) and
\[
\bar\partial_\varepsilon h(x)=\operatorname{co}\{(1,0),(0,1)\}=\partial h(0).
\]
Thus \(\mathcal{V}_\varepsilon h(x)\) jumps from \(\{0\}\) to \(\operatorname{span}\{(1,-1)\}\) when \(\varepsilon\) crosses \(1\). At the diagonal point \(z=(1,1)\), \(I(z)=\{1,2\}\), \(\mathcal{V}h(z)=\operatorname{span}\{(1,-1)\}\) and \(\mathcal{U}h(z)=\operatorname{span}\{(1,1)\}\).
\end{example}
Both points of this example lie in their \(\mathcal{U}\)-space, as required by Proposition~\ref{prop:xinU}: \((1,0)\in\mathcal{U}h(1,0)\) and \((1,1)\in\mathcal{U}h(1,1)\).
%
%

\begin{proposition}\label{prop:polyepsvusub}
For a polyhedral function \(h\in\mathcal{H}\) and \(\bar x\in\dom h\), if the subspace  \eqref{def:vep} is defined based on \eqref{eq:delpoly} then 
\begin{equation} \notag
 \begin{array}{ccc}
  \limsup_{\substack{x\to \bar{x}\\ \varepsilon\searrow 0}}\mathcal{V}_\varepsilon h (x)=\mathcal{V}h(\bar x)
&\mbox{and}&
  \liminf_{\substack{x\to \bar{x}\\ \varepsilon\searrow 0}}\mathcal{U}_\varepsilon h(x)=\mathcal{U}h(\bar x)\,.
\end{array}
 \end{equation}
\end{proposition}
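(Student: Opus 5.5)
The plan is to reduce both identities to Proposition~\ref{prop:basicSemiC}: we show that the enlarged spaces are nested against the exact spaces at \(\bar x\) for all \((x,\varepsilon)\) near \((\bar x,0)\) with \(x\in\dom h\), and that equality is attained at \((\bar x,0)\).

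First, by Theorem~\ref{thm:poly-enlarg}(ii) there are a neighbourhood \(\mathcal N\) of \(\bar x\) and \(\bar\varepsilon>0\) such that \(\bar\partial_\varepsilon h(x)\subset\partial h(\bar x)\) for \(x\in\mathcal N\cap\dom h\) and \(\varepsilon\in[0,\bar\varepsilon]\). I would shrink \(\mathcal N\) so that also \(J(x)\subset J(\bar x)\), by the proof of Proposition~\ref{prop:polyheiHp}. The enlargement is nonempty there: either \(I_\varepsilon(x)\supset I(x)\neq\emptyset\), or \(I=\emptyset\) and the set is a cone containing \(0\). Since \(\partial h(\bar x)\) is nonempty by Proposition~\ref{prop:domsub}(iv), taking spans of differences gives \(\mathcal V_\varepsilon h(x)\subset\mathcal V h(\bar x)\). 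Orthogonal complements and \eqref{eq:4} then give \(\mathcal U h(\bar x)\subset\mathcal U_\varepsilon h(x)\).

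Second, at \((\bar x,0)\) we have \(I_0(\bar x)=I(\bar x)\), so \eqref{eq:delpoly} coincides with \eqref{eq:polyex}. Hence \(\bar\partial_0h(\bar x)=\partial h(\bar x)\), and therefore \(\mathcal V_0h(\bar x)=\mathcal Vh(\bar x)\) and \(\mathcal U_0h(\bar x)=\mathcal Uh(\bar x)\).

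Third, apply Proposition~\ref{prop:basicSemiC} to the mapping \((x,\varepsilon)\mapsto\mathcal V_\varepsilon h(x)\) on \(\dom h\times\mathbb R_{\ge0}\) near \((\bar x,0)\); its values are subspaces and hence closed. The inclusion \(S(x,\varepsilon)\subset S(\bar x,0)\) gives the outer-limit identity. Likewise \(\mathcal U_0h(\bar x)\subset\mathcal U_\varepsilon h(x)\) gives the inner-limit identity, using the constant selection. This mirrors the proof of Proposition~\ref{prop:vuesub}, with \(u\in\mathcal Uh(0)\) replaced by an arbitrary \(\bar x\in\dom h\). Polyhedrality supplies the local nesting that sublinearity supplied only on the lineality space.

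The only delicate point is the first step. It needs uniformity in \((x,\varepsilon)\) of the slack argument for \(i\notin I(\bar x)\), including the case \(I=\emptyset\), and it needs the \(\operatorname{pos}\{c^j\colon j\in J(x)\}\) part to be controlled by \(J(\bar x)\). Theorem~\ref{thm:poly-enlarg}(ii) already asserts this, so I would simply cite it. I would add the remark that \(J(x)\subset J(\bar x)\) follows from continuity of \(x\mapsto\mang{c^j,x}\).
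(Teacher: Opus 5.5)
Your proposal is correct and follows the paper's proof. Both arguments take the local inclusion \(\bar\partial_\varepsilon h(x)\subset\partial h(\bar x)\) from the activity argument of Theorem~\ref{thm:poly-enlarg}(ii), derive the nesting \(\mathcal{V}_\varepsilon h(x)\subset\mathcal{V}h(\bar x)\) and \(\mathcal{U}h(\bar x)\subset\mathcal{U}_\varepsilon h(x)\), and then apply Proposition~\ref{prop:basicSemiC}. Your explicit checks of nonemptiness and of equality at the base point are details the paper leaves implicit; if the limit is read with \(\varepsilon>0\), that equality also holds at \((\bar x,\varepsilon)\) for small \(\varepsilon\) by the same inclusion.
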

\begin{proof}
By the activity inclusion proved above for the polyhedral enlargement, for any \(\bar x\in C\) the
relaxed activity set satisfies \(I_\varepsilon (x)\subset I(\bar{x})\) when \(x\) is sufficiently close to \(\bar{x}\) and \(\varepsilon\) is small enough, where the inclusion holds as an equality if \(\bar{x}\in\func{int}C\). 

  In view of the expressions \eqref{eq:delpoly} and \eqref{eq:polyex}, the inclusion \(\bar{\partial}_\varepsilon f(x)\subset \partial f(\bar{x})\) always holds, and it becomes an identity if \(\bar{x}\in\func{int}C\).  
 Hence, by \eqref{eq:polyveps},
 for all $ \bar x\in C$
when \((x,\varepsilon)\) are sufficiently close to \((\bar{x},0)\),
\[ \begin{array}{ccc}
\mathcal{V}_\varepsilon h (x)\subset\mathcal{V}h(\bar x)&\mbox{and}&
  \mathcal{U}_\varepsilon h(x)\supset\mathcal{U}h(\bar x)\,,
  \end{array}\]
  And the conclusion holds by Proposition \ref{prop:basicSemiC}.
\end{proof}
The exact \(\mathcal{VU}\) spaces of a polyhedral \(h\in\mathcal{H}\) are explicit, and the one-sided limits hold at every point of \(\dom h\).
\begin{proposition}\label{prop:polyVU}
Let \(h\in\mathcal{H}\) be polyhedral and \(\bar x\in\dom h\). Then
\begin{gather}
\mathcal{V}h(\bar x)=\mcb{\sum_{i\in I(\bar x)}\alpha_ia^i+
\sum_{j\in J(\bar x)}\beta_jc^j\colon\sum_{i\in I(\bar x)}\alpha_i=0,\;
\beta_j\in\RR}\,,\label{eq:vx}
\end{gather}
and
\begin{equation}\label{eq:polysemico}
\limsup_{x\rightarrow \bar{x}} \mathcal{V}h(x)=\mathcal{V}h(\bar x),\ \liminf_{x\rightarrow \bar{x}}\mathcal{U}h(x)=\mathcal{U}h(\bar x)\,.
\end{equation}
\end{proposition}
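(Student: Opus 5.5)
The formula \eqref{eq:vx} follows directly from the representation \eqref{eq:polyex} of Proposition~\ref{prop: polyheH} together with the definition \eqref{eq:defVU}. Since \(\bar x\in\dom h\), Proposition~\ref{prop:domsub}(iv) gives \(\partial h(\bar x)\neq\emptyset\), so \(\mathcal{V}h(\bar x)\) is defined.

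\emph{Case \(I(\bar x)\neq\emptyset\).} Fix \(g=a^{i_0}\) with \(i_0\in I(\bar x)\). A general element of \(\partial h(\bar x)-g\) is \(\sum_{i\in I(\bar x)}\alpha_i a^i-a^{i_0}+\sum_{j\in J(\bar x)}\beta_jc^j\), with \(\alpha\) in the simplex and \(\beta\ge 0\). Its span is obtained in three steps:
\begin{enumerate}[label=\textnormal{(\arabic*)}]
\item the differences \(a^i-a^{i_0}\) span exactly the combinations \(\sum\alpha_ia^i\) with \(\sum\alpha_i=0\);
\item adding \(\beta_jc^j\) with \(\beta_j\ge0\) and taking linear combinations frees the signs of the \(\beta_j\);
\item conversely, every element of the right-hand side of \eqref{eq:vx} is such a linear combination.
\end{enumerate}

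\emph{Case \(I(\bar x)=\emptyset\).} Then \(\partial h(\bar x)=\operatorname{pos}\{c^j:j\in J(\bar x)\}\). Taking \(g=0\), the span is \(\operatorname{span}\{c^j:j\in J(\bar x)\}\). This agrees with \eqref{eq:vx}, read with the empty sum \(\sum\alpha_i=0\) trivially satisfied. This is the same linear algebra already used for \eqref{eq:polyveps} in Theorem~\ref{thm:poly-enlarg}(iii); indeed \eqref{eq:vx} is the case \(\varepsilon=0\) there, since \(I_0(x)=I(x)\).

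For the limits \eqref{eq:polysemico}, I would invoke Proposition~\ref{prop:polyheiHp}(i). It gives a neighbourhood \(B(\bar x,\epsilon)\) on which \(\partial h(x)\subset\partial h(\bar x)\) for all \(x\in B(\bar x,\epsilon)\cap\dom h\). The underlying reason is that \(I(x)\subset I(\bar x)\) and \(J(x)\subset J(\bar x)\) there.

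Taking spans of differences then yields \(\mathcal{V}h(x)\subset\mathcal{V}h(\bar x)\). The one point to check is that the base point may be chosen in the smaller set: pick \(g\in\partial h(x)\subset\partial h(\bar x)\). Then \(\partial h(x)-g\subset\partial h(\bar x)-g\), and since \(\mathcal{V}h(\bar x)\) is independent of the base point, the span inclusion follows. Passing to orthogonal complements gives \(\mathcal{U}h(\bar x)\subset\mathcal{U}h(x)\).

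Both inclusions hold for \(x\) near \(\bar x\) in \(\dom h=\dom\partial h\), and the limits are understood over points where the spaces are defined. The values are closed subspaces, so Proposition~\ref{prop:basicSemiC} gives both identities in \eqref{eq:polysemico}.

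No real obstacle is expected. The only delicate points are the bookkeeping when \(I(\bar x)=\emptyset\) (the pure-cone case) and making explicit that the limits range over \(\dom h\), which equals \(\dom\partial h\) by Proposition~\ref{prop:domsub}(iv). Alternatively, the limits follow from Proposition~\ref{prop:polyepsvusub} by restricting to \(\varepsilon=0\), because \(\bar\partial_0h=\partial h\) for the enlargement \eqref{eq:delpoly}.
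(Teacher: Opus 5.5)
Your proposal is correct and follows the paper's proof. You read off \eqref{eq:vx} from \eqref{eq:polyex} and \eqref{eq:defVU}, and you obtain \eqref{eq:polysemico} from the active-set inclusions \(I(x)\subset I(\bar x)\), \(J(x)\subset J(\bar x)\) near \(\bar x\) followed by Proposition~\ref{prop:basicSemiC}; your extra bookkeeping only makes explicit what the paper's two-line proof leaves implicit, namely the base point \(g\in\partial h(x)\subset\partial h(\bar x)\), the pure-cone case \(I(\bar x)=\emptyset\), and taking the limits over \(\dom h=\dom\partial h\).
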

\begin{proof}
The formula \eqref{eq:vx} is \eqref{eq:polyex} and \eqref{eq:defVU}. From the continuity of the linear terms, \(I(x)\subset I(\bar x)\) and \(J(x)\subset J(\bar x)\) for \(x\) near \(\bar x\), so Proposition~\ref{prop:basicSemiC} gives \eqref{eq:polysemico}.
\end{proof}
The next example checks the one-sided limits for the rectifier.
\begin{example}[Example~\ref{eg:relu} continued]\label{eg:relu-poly}
Let \(h(t)=\max\{t,0\}\). Then \(I=\{1\}\), \(a^1=1\), and \(I(t)=\{1\}\) for \(t\ge 0\), while \(I(t)=\emptyset\) for \(t<0\).
For \(t>\varepsilon>0\), \(I_\varepsilon(t)=\{1\}\), so the polyhedral enlargement \eqref{eq:delpoly} gives \(\bar\partial_\varepsilon h(t)=\{1\}\), hence \(\mathcal{V}_\varepsilon h(t)=\{0\}=\mathcal{V}h(t)\) and \(\mathcal{U}_\varepsilon h(t)=\mathbb{R}=\mathcal{U}h(t)\).
As \((t,\varepsilon)\to(t_0,0)\) with \(t_0>0\), the activity stays \(\{1\}\) for small \(\varepsilon\), and both limits of Proposition~\ref{prop:polyepsvusub} hold as equalities.
At \(t_0=0\) the same proposition gives the one-sided limits, while the two-sided limit in \(\varepsilon\) fails by Remark~\ref{rem:relu-miss}.
\end{example}
The polyhedral enlargement meets Assumption~\ref{assu:1}, so the transfer theorem applies to it.
\begin{corollary}[polyhedral outer function]\label{cor:poly-transfer}
Let \(h\in\mathcal{H}\) be polyhedral and let \(\bar\partial_\varepsilon h\) be the enlargement \eqref{eq:delpoly}.
Let \(F:\mathbb{R}^n\to\mathbb{R}^m\) be of class \(C^1\), \(f=h\circ F\), \(\bar x\in\mathcal{R}_{h,F}\), and assume \eqref{eq:Meps-trans}.
Then the limits of Theorem~\ref{thm:eps-transfer} hold.
\end{corollary}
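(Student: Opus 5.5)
The plan is to reduce Corollary~\ref{cor:poly-transfer} to Theorem~\ref{thm:eps-transfer}. That theorem needs three inputs: membership \(\bar x\in\mathcal{R}_{h,F}\), the transversality \eqref{eq:Meps-trans}, and the two limit identities \eqref{eq:iscUehf}--\eqref{eq:outlvfe} for the outer function along \(F(x)\). The first two are hypotheses of the corollary, so the work is in verifying the limit identities for the polyhedral enlargement \eqref{eq:delpoly}. Since \(\bar x\in\mathcal{R}_{h,F}\subset\dom(h\circ F)\), the point \(\bar y:=F(\bar x)\) lies in \(\dom h\). Proposition~\ref{prop:polyepsvusub} therefore applies at \(\bar y\) and gives
\[
\limsup_{\substack{y\to\bar y\\ \varepsilon\searrow 0}}\mathcal{V}_\varepsilon h(y)=\mathcal{V}h(\bar y),\qquad
\liminf_{\substack{y\to\bar y\\ \varepsilon\searrow 0}}\mathcal{U}_\varepsilon h(y)=\mathcal{U}h(\bar y).
\]
Unlike Proposition~\ref{prop:vuesub}, this holds at every point of \(\dom h\), not only on the lineality space. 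This is why no hypothesis \(F(\bar x)\in\mathcal{U}h(0)\) is needed, in contrast with Corollary~\ref{cor:ext-transfer}.

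Next, these limits must be passed from \(y\to\bar y\) to the composed parametrisation \(y=F(x)\), \(x\to\bar x\). The cleanest route avoids composing limits abstractly and uses the nesting that underlies Proposition~\ref{prop:polyepsvusub}. By Theorem~\ref{thm:poly-enlarg}(ii) there are a neighbourhood \(N\) of \(\bar y\) and \(\bar\varepsilon>0\) such that
\[
\bar\partial_\varepsilon h(y)\subset\partial h(\bar y)\quad\text{for all }y\in N\cap\dom h,\ \varepsilon\in[0,\bar\varepsilon],
\]
and hence \(\mathcal{V}_\varepsilon h(y)\subset\mathcal{V}h(\bar y)\) and \(\mathcal{U}h(\bar y)\subset\mathcal{U}_\varepsilon h(y)\) whenever \(\bar\partial_\varepsilon h(y)\neq\emptyset\). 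By continuity of \(F\), the point \(F(x)\) lies in \(N\) for \(x\) near \(\bar x\). Restricted to \(x\in\dom f\) with \(\bar\partial_\varepsilon h(F(x))\neq\emptyset\), the same inclusions hold with \(y=F(x)\). At the limit point \((\bar x,0)\) the sets equal \(\mathcal{V}h(\bar y)\) and \(\mathcal{U}h(\bar y)\), since \(\bar\partial_0 h=\partial h\) by \eqref{eq:delepspo}. Proposition~\ref{prop:basicSemiC}, applied to the mappings \((x,\varepsilon)\mapsto\mathcal{V}_\varepsilon h(F(x))\) and \((x,\varepsilon)\mapsto\mathcal{U}_\varepsilon h(F(x))\), then yields exactly \eqref{eq:outlvfe} and \eqref{eq:iscUehf}.

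With all hypotheses of Theorem~\ref{thm:eps-transfer} checked, that theorem gives \eqref{eq:inscuex} and \eqref{eq:prop94}.

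The step requiring the most care is the one-sided-limit step. The mapping is defined only on the admissible set where \(\bar\partial_\varepsilon h(F(x))\neq\emptyset\), and its value at \((\bar x,0)\) must be the exact space, so Proposition~\ref{prop:basicSemiC} has to be read relative to that domain. Two facts make this work. First, \(\bar x\) itself is admissible, because \(\partial h(\bar y)\neq\emptyset\) on \(\dom h\) by Proposition~\ref{prop:domsub}(iv). Second, all sets involved are subspaces and hence closed. The remaining point to check is that the hypothesis \(\bar x\in\mathcal{R}_{h,F}\) also secures the chain rule \eqref{eq:uvefcom} at nearby \(x\), which the proof of Theorem~\ref{thm:eps-transfer} invokes through Proposition~\ref{prop:epsVUchain}. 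If needed, this is supplied by the local nature of (R1)/(R2): under (R2) the condition \eqref{eq:icqo} is stable for polyhedral \(h\), since \(\partial^\infty h(y)=\pos\{c^j:j\in J(y)\}\) and \(J(y)\subset J(\bar y)\) nearby.
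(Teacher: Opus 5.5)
Your proposal is correct and follows the paper's route: it reduces the corollary to Theorem~\ref{thm:eps-transfer} by applying Proposition~\ref{prop:polyepsvusub} at \(F(\bar x)\in\dom h\). Where the paper simply asserts that the limits persist as \(F(x)\to F(\bar x)\), you justify this by pulling the local nesting of Theorem~\ref{thm:poly-enlarg}(ii) back through the continuous map \(F\) and applying Proposition~\ref{prop:basicSemiC} to the composed mappings, which makes that step precise. Your closing concern about \(\mathcal{R}_{h,F}\) at nearby \(x\) is unnecessary: for the transported enlargement \(F'(x)^\top\bar\partial_\varepsilon h(F(x))\) the identities \eqref{eq:uvefcom} are pure linear algebra, and the chain-rule set is needed only at \(\bar x\).
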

\begin{proof}
Theorem~\ref{thm:poly-enlarg} gives that the enlargement satisfies Assumption~\ref{assu:1}, and Proposition~\ref{prop:polyepsvusub} supplies the one-sided limits at every point of \(\dom h\). Since \(\bar x\in\mathcal{R}_{h,F}\) gives \(F(\bar x)\in\dom h\), the limits hold as \(F(x)\to F(\bar x)\). Theorem~\ref{thm:eps-transfer} therefore gives the conclusion.
\end{proof}
The next example records that the same layer meets the hypotheses of the polyhedral corollary.
\begin{example}[Example~\ref{eg:layer} continued]\label{eg:layer-poly}
The unbiased layer of Example~\ref{eg:layer} is polyhedral, so the enlargement \eqref{eq:delpoly} applies. The same computation gives \(\mathcal{U}_\varepsilon f(x)=\ker A=\mathcal{U}f(0)\) for all \(x\) and all \(\varepsilon>0\), which is the conclusion of Corollary~\ref{cor:poly-transfer}. The rectifier of Example~\ref{eg:relu-poly} is the one-dimensional case with \(F\) the identity.
\end{example}

\section{Partial smoothness}
This section records partial smoothness of \(h\in\mathcal{H}\) relative to the lineality space, and the composite statement under the same transversality as Theorems~\ref{thm:eps-transfer} and~\ref{thm:Meps}. It is not used by those theorems. Throughout, the manifold is of class \(C^2\) and \(h\) is proper, closed and sublinear. Tilt stability is left aside. The identification of the active manifold under prox-regularity is~\cite{Hare2007identify}; the constant-rank view of partial smoothness is~\cite{Lewis2018rank}. 

A \(C^k\)-manifold and its normal space are as in \cite[Definition~2.1]{Lewis2002active}. A proper closed convex function \(f\) is \emph{partly smooth} at \(\bar x\) relative to a \(C^2\)-manifold \(\mathcal{M}\) if \(\partial f(\bar x)\neq\emptyset\), \(f\) restricted to \(\mathcal{M}\) is \(C^2\), \(N_{\mathcal{M}}(\bar x)=\mathcal{V}f(\bar x)\), and \(\partial f\) is continuous at \(\bar x\) relative to \(\mathcal{M}\) \cite[Definition~2.7]{Lewis2002active}. It is partly smooth relative to \(\mathcal{M}\) if this holds at every point of \(\mathcal{M}\).

The nuclear norm is partly smooth relative to the constant-rank manifolds \cite[Table~1]{doi:10.1137/16M106340X}. The identification \(\mathcal{U}_\varepsilon\|\cdot\|_*(0)=\mathcal{U}\|\cdot\|_*(0)=\{0\}\) is Proposition~\ref{prop:sub}\ref{it:1-5} together with Proposition~\ref{prop:U0int}, since the spectral-norm ball has nonempty interior.
\begin{proposition}[semicontinuity along a partly smooth manifold]\label{prop:PSlim}
If a function \(g\colon\mathbb{R}^n\rightarrow\overbar{\mathbb{R}}\) is partly smooth relative to the set \(\mathcal{M}\subset\mathbb{R}^n\) then for all \(\bar{x}\in\mathcal{M}\) we have 
\begin{gather}
\limsup_{x \urightarrow[\mathcal{M}]{} \bar{x}}\mathcal{V}g(x)=\mathcal{V}g(\bar x)\label{eq:prop51}\\
\liminf_{x \urightarrow[\mathcal{M}]{} \bar{x}}\mathcal{U}g(x)=\mathcal{U}g(\bar x).\label{eq:prop52}
\end{gather}
\end{proposition}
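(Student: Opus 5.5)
The plan is to get both identities from the normal-space condition and the continuity of \(\partial g\) in the definition of partial smoothness, in the same way as Propositions~\ref{prop:3} and~\ref{prop:polyVU}. Since \(\mathcal{U}g(x)=\mathcal{V}g(x)^\perp\), the \(\mathcal{U}\)-statement should follow from the \(\mathcal{V}\)-statement. The key point is that partial smoothness holds at every point of \(\mathcal{M}\). Hence for every \(x\in\mathcal{M}\) we have \(\mathcal{V}g(x)=N_{\mathcal{M}}(x)\) and \(\mathcal{U}g(x)=T_{\mathcal{M}}(x)\). The problem therefore reduces to the continuity of the normal and tangent bundles of a \(C^2\) manifold.

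\textbf{Step 1.} Near \(\bar x\), write \(\mathcal{M}=\{x:\Phi(x)=0\}\) locally, with \(\Phi\) of class \(C^2\) and \(\Phi'(\bar x)\) surjective, as in \cite[Definition~2.1]{Lewis2002active}. Surjectivity persists for \(x\in\mathcal{M}\) near \(\bar x\). Then \(N_{\mathcal{M}}(x)=\operatorname{Im}\Phi'(x)^\top\) and \(T_{\mathcal{M}}(x)=\ker\Phi'(x)\), both of constant dimension \(k\).

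\textbf{Step 2 (outer limit of \(\mathcal{V}\)).} Take \(x_\nu\to\bar x\) in \(\mathcal{M}\) and \(v_\nu\in\mathcal{V}g(x_\nu)\) with \(v_\nu\to v\). Write \(v_\nu=\Phi'(x_\nu)^\top\lambda_\nu\). Surjectivity near \(\bar x\) gives a uniform bound on the smallest singular value, so \(\lambda_\nu\) is bounded. Passing to a subsequence gives \(v=\Phi'(\bar x)^\top\lambda\in N_{\mathcal{M}}(\bar x)=\mathcal{V}g(\bar x)\). This proves \(\limsup\subset\mathcal{V}g(\bar x)\). For the reverse inclusion, given \(v=\Phi'(\bar x)^\top\lambda\), take \(v_\nu=\Phi'(x_\nu)^\top\lambda\to v\). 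So in fact the full Painlevé–Kuratowski limit exists, and \eqref{eq:prop51} follows.

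\textbf{Step 3 (inner limit of \(\mathcal{U}\)).} Use the same argument with kernels. Given \(w\in\ker\Phi'(\bar x)\), set \(w_\nu=P_{\ker\Phi'(x_\nu)}w\). The projector equals \(I-\Phi'^\top(\Phi'\Phi'^\top)^{-1}\Phi'\), which is continuous in \(x\), so \(w_\nu\to w\). This gives \(\mathcal{U}g(\bar x)\subset\liminf\). The reverse inclusion \(\liminf\subset\limsup\subset\ker\Phi'(\bar x)\) follows by continuity of \(\Phi'\). Together these give \eqref{eq:prop52}.

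An alternative route avoids coordinates: subspaces of a fixed dimension \(k\) converge outer iff inner (Grassmannian compactness), and taking orthogonal complements preserves this convergence.

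\textbf{Main obstacle.} The only delicate point is the bound on \(\lambda_\nu\), equivalently the constant rank of \(\Phi'\) near \(\bar x\). I will take this from the manifold definition. I will not try to use the continuity of \(\partial g\) in the definition of partial smoothness; it is not needed once the normal-space identity is available at every point of \(\mathcal{M}\).
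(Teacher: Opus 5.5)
Your argument is correct, and its first move is the paper's: partial smoothness at every point of $\mathcal{M}$ gives $\mathcal{V}g(x)=N_{\mathcal{M}}(x)$ and $\mathcal{U}g(x)=T_{\mathcal{M}}(x)$ along $\mathcal{M}$. Neither proof needs the continuity of $\partial g$.

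The routes part at the semicontinuity of the normal and tangent maps. The paper quotes general variational analysis. It uses outer semicontinuity of the normal-cone mapping \cite[Proposition~6.6]{Rockafellar1998} for \eqref{eq:prop51}. For \eqref{eq:prop52} it combines Clarke regularity of a smooth manifold \cite[Example~6.8]{Rockafellar1998} with the characterization of regularity by inner semicontinuity of $T_{\mathcal{M}}$ \cite[Corollary~6.29]{Rockafellar1998}.

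You instead prove these facts by hand in a local defining map $\Phi$; your Step~1 is exactly the content of that Example~6.8. You rely on the constant rank of $\Phi'$. The uniform lower bound on the smallest singular value bounds the multipliers. The projector $I-\Phi'(x)^\top\bigl(\Phi'(x)\Phi'(x)^\top\bigr)^{-1}\Phi'(x)$ supplies continuous selections in the kernels.

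The paper's version is two lines and rests only on facts valid for any locally closed Clarke-regular set, which give exactly the one-sided statements claimed. Yours is self-contained and uses the manifold structure to give more: full Painlev\'e--Kuratowski continuity of both $\mathcal{V}g$ and $\mathcal{U}g$ along $\mathcal{M}$. Your Grassmannian remark also recovers this from the one-sided statements plus constant dimension.

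One cosmetic point: your opening sentence promises an argument in the style of Propositions~\ref{prop:3} and~\ref{prop:polyVU}. Those rest on nested inclusions and Proposition~\ref{prop:basicSemiC}, which play no role in what you actually do.
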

\begin{proof}
As \(g\) is partly smooth at any point \(x\in\mathcal{M}\) relative to \(\mathcal{M}\), we have \(N_{\mathcal{M}}(x)=\mathcal{V}g(x)\) holds for all \(x\in\mathcal{M}\). Consequently, \eqref{eq:prop51} follows from the outer semicontinuity of normal cone (see \cite[Proposition 6.6]{Rockafellar1998}). 
The property \(N_{\mathcal{M}}(x)=\mathcal{V}g(x)\) is equivalent to  \(T_{\mathcal{M}}(x)=\mathcal{U}g(x)\). 
To show \eqref{eq:prop52}, it suffices to show that the mapping \(T_{\mathcal{M}}\) is inner semicontinuous relative to \(\mathcal{M}\). \cite[Example~6.8]{Rockafellar1998} gives Clarke regularity of a smooth manifold, and \cite[Corollary~6.29(g)]{Rockafellar1998} then gives the inner semicontinuity of \(T_{\mathcal{M}}\). 
\end{proof}
The next example shows the semicontinuity along an activation manifold, and the jump on leaving it.
\begin{example}[Example~\ref{eg:layer} continued]\label{eg:act-mfld}
Fix a sign pattern for the unbiased layer of Example~\ref{eg:layer} and let \(\mathcal{M}\) be the set of inputs realizing it.
On the relative interior every active coordinate of \(Ax\) is strictly positive and every inactive one strictly negative, so \(\partial h(F(x))\) is a singleton, \(\mathcal{V}h(F(x))=\{0\}\) and \(\mathcal{U}h(F(x))=\mathbb{R}^m\), both constant.
Hence \(\mathcal{V}f\) and \(\mathcal{U}f\) are constant along \(\mathcal{M}\), and the two limits of Proposition~\ref{prop:PSlim} hold as equalities.
When an input approaches the boundary of \(\mathcal{M}\), one coordinate of \(Ax\) hits zero, \(\partial h\) jumps from a singleton to an interval, and \(\mathcal{V}f\) jumps up. The semicontinuity is along \(\mathcal{M}\), not off it.
\end{example}
For \(h\in\mathcal{H}\), it is shown in \cite[Proposition 5.3]{hare2006functions} that \(h\) is partly smooth at 0 relative to \(\mathcal{U}h(0)\), which is also the lineality space of \(h\). 
With the help of Proposition \ref{prop:2}, we can have the following extension of this result. 
\begin{proposition}[partial smoothness relative to the lineality space]\label{pron:3}
If \(h\in\mathcal{H}\) then \(h\) is partly smooth relative to \(\mathcal{U}h(0)\).
\end{proposition}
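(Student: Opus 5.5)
Fix \(u\in\mathcal{M}:=\mathcal{U}h(0)\). We will verify, at \(u\), the four requirements of \cite[Definition~2.7]{Lewis2002active} one by one. Proposition~\ref{prop:2} with \(x=0\) (recorded as \eqref{eq:hu0}--\eqref{eq:hdirec} in the proof of Proposition~\ref{prop:compvu}) is the only tool needed; with it, each requirement at \(u\) reduces to the corresponding requirement at the origin. Since \(\mathcal{M}\) is a linear subspace, it is a \(C^\infty\) (in particular \(C^2\)) manifold, with \(T_{\mathcal{M}}(u)=\mathcal{U}h(0)\) and \(N_{\mathcal{M}}(u)=\mathcal{U}h(0)^\perp=\mathcal{V}h(0)\) at every point.

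The steps, in order, are as follows.
\begin{enumerate}[label=\textnormal{(\arabic*)}]
\item \emph{Nonempty subdifferential.} By \eqref{eq:ve} with \(x=0\) we have \(\partial h(u)=\partial h(0)\). This set is nonempty by Proposition~\ref{prop:sub}\ref{it1:2}.
\item \emph{Smoothness of the restriction.} By \eqref{eq:hxu} with \(x=0\), \(h(u')=\mang{g_0,u'}\) for all \(u'\in\mathcal{M}\) and any fixed \(g_0\in\partial h(0)\). Thus \(h|_{\mathcal{M}}\) is linear, hence \(C^2\), and in particular finite on \(\mathcal{M}\).
\item \emph{Normal sharpness.} By \eqref{eq:uv-eq}, \(\mathcal{V}h(u)=\mathcal{V}h(0)=N_{\mathcal{M}}(u)\).
\item \emph{Continuity of \(\partial h\) relative to \(\mathcal{M}\).} For \(u'\in\mathcal{M}\) we have \(\partial h(u')=\partial h(0)\) by step (1). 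The mapping \(\partial h\) is therefore constant on \(\mathcal{M}\), so it is continuous (in the Painlev\'e--Kuratowski sense) at \(u\) relative to \(\mathcal{M}\).
\end{enumerate}
Since \(u\in\mathcal{M}\) was arbitrary, \(h\) is partly smooth at every point of \(\mathcal{M}\), which is the claim.

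There is no substantial obstacle; the point that needs care is step (2). The cited definition asks for smoothness of \(h\) restricted to \(\mathcal{M}\), possibly in the form ``\(h\) agrees near \(u\) on \(\mathcal{M}\) with a \(C^2\) function.'' The linear function \(\mang{g_0,\cdot}\) serves as that extension, and \eqref{eq:hxu} shows that it does not depend on the choice of \(g_0\). This independence holds because \(\mang{g_0-g_0',u'}=0\) for \(g_0,g_0'\in\partial h(0)\), since \(g_0-g_0'\in\mathcal{V}h(0)\perp\mathcal{U}h(0)\).

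If the definition in use also requires regularity of \(h\) at \(u\), this is automatic for convex \(h\) with \(\partial h(u)\neq\emptyset\) (convex functions are subdifferentially regular). One can alternatively note, as a consistency check, that \(u=0\) recovers \cite[Proposition~5.3]{hare2006functions}, and that the translation invariance \eqref{eq:hxu}--\eqref{eq:ve} transports every local property from \(0\) to \(u\).
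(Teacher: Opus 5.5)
Your proof is correct and follows the paper's argument: both rest on the translation invariance of Proposition~\ref{prop:2} along \(\mathcal{U}h(0)\), which makes \(h\) linear on the subspace and keeps \(\partial h\), \(\mathcal{V}h\) and \(\mathcal{U}h\) constant there. The only difference is that the paper anchors the claim at the origin by citing \cite[Proposition~5.3]{hare2006functions} and then transports it, whereas you check all four conditions directly at every \(u\in\mathcal{U}h(0)\), including \(u=0\); this makes your argument self-contained and spells out the regularity and subgradient-continuity conditions that the paper leaves implicit.
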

\begin{proof}
At \(0\) this is \cite[Proposition~5.3]{hare2006functions}. Along the lineality space, Proposition~\ref{prop:2} gives translation invariance of the subspaces and of the subdifferential, which is the normal-space identity. The \(C^2\) restriction is the same identity: for \(u\in\mathcal{U}h(0)\) one has \(h(x+u)=h(x)+\mang{g_0,u}\), so \(h\) restricted to \(x+\mathcal{U}h(0)\) is affine.
\end{proof}
The next example has a nontrivial lineality space.
\begin{example}[lineality of the max function]\label{eg:max-lin}
Let \(h(y)=\max_{1\le i\le m}y_i\). Then \(\partial h(0)=\Delta_m\), which lies in the hyperplane \(\sum s_i=1\) and has empty interior in \(\mathbb{R}^m\).
Hence \(\mathcal{V}h(0)=\{v:\sum v_i=0\}\) and \(\mathcal{U}h(0)=\operatorname{span}\{\mathbf{1}\}\).
Along that line, \(h(y+c\mathbf{1})=h(y)+c\), so \(h\) is affine and Proposition~\ref{pron:3} applies with a one-dimensional manifold.
For the rectifier and the \(\ell_1\) norm the subdifferential at the origin has nonempty interior, so \(\mathcal{U}h(0)=\{0\}\) and the manifold is a point.
\end{example}
%

\begin{proposition}[partial smoothness of the composite]\label{prop:PScomp}
Suppose \(f(x)=h\mbk{F(x)}\) for a smooth mapping \(F\colon\mathbb{R}^n\to\mathbb{R}^m\) and a function \(h\in\mathcal{H}\). 
If \(F\) is transversal to \(\mathcal{U}h(0)\) at some \(\bar{x}\in F\inv\mbk{\mathcal{U}h(0)}\), i.e.
\begin{equation}\label{eq:transver}
\Ker\mbk{F'(\bar{x})^\top}\cap N_{\mathcal{U}h(0)}\mbk{F(\bar{x})}=\mcb{0},
\end{equation}
then 
\begin{enumerate}[label=\textnormal{(\roman*)}]
\item \(f\) is partly smooth at \(\bar{x}\) relative to \( F\inv\mbk{\mathcal{U}h(0)}\);
\item if in addition a neighbourhood of \(\bar x\) in \(F^{-1}\bigl(\mathcal{U}h(0)\bigr)\) lies in \(\mathcal{R}_{h,F}\), then
\[
\mathcal{V}f(x)=N_{ F\inv\mbk{\mathcal{U}h(0)}}(x)
\]
for all \(x\in  F\inv\mbk{\mathcal{U}h(0)}\) sufficiently close to \(\bar{x}\);\label{it2:2}
\item under the same restriction to \(\mathcal{R}_{h,F}\), \(\partial f\) is continuous relative to \( F\inv\mbk{\mathcal{U}h(0)}\) and therefore \(f\) is partly smooth at  all \(x\in  F\inv\mbk{\mathcal{U}h(0)}\) sufficiently close to \(\bar{x}\) relative to \( F\inv\mbk{\mathcal{U}h(0)}\);
\end{enumerate}
\end{proposition}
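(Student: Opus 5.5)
The plan is to reduce everything to the four ingredients of partial smoothness in \cite[Definition~2.7]{Lewis2002active}, using Proposition~\ref{pron:3} for \(h\) along \(\mathcal{M}_h:=\mathcal{U}h(0)\) and Proposition~\ref{prop:compvu} for the composite. First, since \(\mathcal{U}h(0)\) is a linear subspace, \(N_{\mathcal{U}h(0)}(F(\bar x))=\mathcal{U}h(0)^\perp=\mathcal{V}h(0)\). By \eqref{eq:uv-eq} this equals \(\mathcal{V}h(F(\bar x))\). So \eqref{eq:transver} is exactly \eqref{eq:Meps-trans} at \(\bar x\), i.e.\ \(\operatorname{Im}F'(\bar x)+\mathcal{U}h(0)=\mathbb{R}^m\). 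As in the proof of Theorem~\ref{thm:Meps}, \(P_{\mathcal{V}h(0)}F'(\bar x)\) is surjective, and this persists for \(x\) near \(\bar x\) because \(F\) is \(C^1\). Reading ``smooth'' as \(C^2\), the preimage theorem then makes \(\mathcal{M}:=F^{-1}(\mathcal{U}h(0))=\{y:P_{\mathcal{V}h(0)}F(y)=0\}\) a \(C^2\) manifold near \(\bar x\). Its tangent space is \(T_{\mathcal{M}}(x)=F'(x)^{-1}(\mathcal{U}h(0))\), and its normal space is \(N_{\mathcal{M}}(x)=F'(x)^\top\mathcal{V}h(0)\), the latter because surjectivity gives \((A^{-1}U)^\perp=A^\top U^\perp\).

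For (i), I will check the four conditions at \(\bar x\). Smoothness on \(\mathcal{M}\) comes from \eqref{eq:fsmoF}: there \(f(x)=\mang{g_0,F(x)}\) with \(g_0\in\ri\partial h(0)\), which is \(C^2\). Nonemptiness of \(\partial f(\bar x)\) and the normal-space identity need the chain rule. I will argue that the transversality itself supplies a qualification. The horizon subdifferential \(\partial^\infty h(F(\bar x))\) is contained in the recession cone of \(\partial h(0)\), which lies in \(\mathcal{V}h(0)\)-directions. Hence \eqref{eq:transver} implies \eqref{eq:icqo}, and (CR) holds at \(\bar x\) (this is \cite[Theorem~10.6]{Rockafellar1998} directly). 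Then \(\partial f(\bar x)=F'(\bar x)^\top\partial h(0)\neq\emptyset\). The linear algebra of Proposition~\ref{prop:compvu}, together with \(\mathcal{V}h(F(\bar x))=\mathcal{V}h(0)\), gives \(\mathcal{V}f(\bar x)=F'(\bar x)^\top\mathcal{V}h(0)=N_{\mathcal{M}}(\bar x)\). For continuity of \(\partial f\) relative to \(\mathcal{M}\) at \(\bar x\), I would use that on \(\mathcal{M}\) one has \(\partial h(F(x))=\partial h(0)\), a fixed set, by \eqref{eq:hu0}. Then \(\partial f(x)=F'(x)^\top\partial h(0)\), and I would show that this converges to \(F'(\bar x)^\top\partial h(0)\) in the Painlev\'e--Kuratowski sense.

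Parts (ii) and (iii) follow by repeating the pointwise argument at each \(x\in\mathcal{M}\) near \(\bar x\). Now \(x\in\mathcal{R}_{h,F}\) is assumed, so Proposition~\ref{prop:compvu} applies directly and gives \(\mathcal{V}f(x)=F'(x)^\top\mathcal{V}h(0)=N_{\mathcal{M}}(x)\), using the persistence of surjectivity. Continuity of \(\partial f\) at each such \(x\) is the same argument as at \(\bar x\). Combined with the \(C^2\) restriction, this yields partial smoothness on a neighbourhood in \(\mathcal{M}\).

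\textbf{Main obstacle.} The difficulty is the continuity of \(x\mapsto F'(x)^\top\partial h(0)\) when \(\partial h(0)\) is unbounded. Inner semicontinuity is easy, using constant selections \(F'(x)^\top s\). Outer semicontinuity can fail if \(\|s_k\|\to\infty\). I would handle it as in the proof of Theorem~\ref{thm:eps-transfer}, via \cite[Theorem~5.53(b)]{Rockafellar1998}. Write \(s_k=g_0+v_k\) with \(v_k\in\mathcal{V}h(0)\). If \(\|v_k\|\to\infty\), normalise to obtain a limit \(v\neq0\) with \(v\in\mathcal{V}h(0)\cap\ker F'(\bar x)^\top\), which contradicts \eqref{eq:transver}. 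Hence \(\{v_k\}\) is bounded and closedness of \(\partial h(0)\) finishes the argument. The other delicate point is justifying (CR) at \(\bar x\) in (i) without assuming \(\mathcal{R}_{h,F}\). If the recession-cone inclusion argument fails, I would fall back on requiring \(\bar x\in\mathcal{R}_{h,F}\) as in (ii).
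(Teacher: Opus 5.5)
Your argument is correct, but for (i) it takes a different route from the paper.

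\textbf{The paper's route.} The paper obtains (i) in one line from Proposition~\ref{pron:3} and Lewis's chain rule for partial smoothness \cite[Theorem~4.2]{Lewis2002active}. In (ii) it writes $N_{\mathcal{M}}(x)=F'(x)^\top N_{\mathcal{U}h(0)}(F(x))$ by transversality. It then replaces $N_{\mathcal{U}h(0)}(F(x))$ by $\mathcal{V}h(F(x))$ using partial smoothness of $h$, and calls on \cite[Theorem~1(i)]{Liu2026vu} at points of $\mathcal{R}_{h,F}$. In (iii) it says continuity of $x\mapsto F'(x)^\top\partial h(0)$ follows immediately from continuity of $F'$.

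\textbf{Your route.} You check the defining conditions by hand. You realise $\mathcal{M}=F^{-1}(\mathcal{U}h(0))$ as the regular zero set of $P_{\mathcal{V}h(0)}\circ F$. You get the chain rule from $\partial^\infty h(F(x))=N_{\operatorname{dom}h}(F(x))=(\partial h(0))^\infty\subset\mathcal{V}h(0)$. You compute $N_{\mathcal{M}}(x)=F'(x)^\top\mathcal{V}h(0)$ directly.

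\textbf{What each buys.} The paper's route is shorter. It inherits everything from the general theorem, including the regularity of the nonconvex composite at nearby points of $\mathcal{M}$, which Lewis's definition requires. Your route is self-contained and buys two things.
\begin{itemize}
\item Your normalization argument for outer semicontinuity is genuinely needed when $\partial h(0)$ is unbounded. It is exactly what the paper's ``immediately'' in (iii) passes over.
\item The horizon inclusion you worried about does hold, so no fallback to $\mathcal{R}_{h,F}$ is needed: for $d\in(\partial h(0))^\infty$ and $g\in\partial h(0)$ one has $d=(g+d)-g\in\partial h(0)-g$. Since transversality is an open condition, the same argument works at every point of $\mathcal{M}$ near $\bar x$. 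It gives \eqref{eq:icqo}, hence (CR), regularity of $f$ by \cite[Theorem~10.6]{Rockafellar1998}, and $\mathcal{V}f(x)=N_{\mathcal{M}}(x)$ there. So your proof does not need the restriction to $\mathcal{R}_{h,F}$ in (ii)--(iii).
\end{itemize}

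That restriction can even be vacuous. Take $h=\delta_L$ with $L\neq\mathbb{R}^m$ a subspace. Transversality prevents $F$ from mapping a neighbourhood into $L=\operatorname{aff}(\operatorname{dom}h)$, so (R1) fails. Also $\operatorname{dom}h\setminus\operatorname{ri}(\operatorname{dom}h)=\emptyset$, so (R2) fails. Thus no point of $\mathcal{M}$ lies in $\mathcal{R}_{h,F}$, while your argument still applies.

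\textbf{Two small things to tidy.}
\begin{itemize}
\item In (i), say explicitly that \eqref{eq:icqo} holds at the points of $\mathcal{M}$ near $\bar x$. The continuity step uses $\partial f(x)=F'(x)^\top\partial h(0)$ there, and Lewis's definition needs regularity there.
\item The identity $(A^{-1}U)^\perp=A^\top U^\perp$ holds for every linear $A$. Surjectivity is needed only to identify $T_{\mathcal{M}}(x)$ with $F'(x)^{-1}(\mathcal{U}h(0))$.
\end{itemize}
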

\begin{proof}
\indent (i) This follows from Proposition \ref{pron:3} and the chain rule of partial smoothness in \cite[Theorem 4.2]{Lewis2002active}.

\indent (ii) By the given transversality, we have
\[
N_{ F\inv\mbk{\mathcal{U}h(0)}}(x)=F'(x)^\top N_{\mathcal{U}h(0)}(F(x))
\]
for all \(x\in  F\inv\mbk{\mathcal{U}h(0)}\) sufficiently close to \(\bar{x}\). 
From the partial smoothness of \(h\) relative to \(\mathcal{U}h(0)\) (see Proposition \ref{pron:3}) we have \(N_{\mathcal{U}h(0)}(F(x))=\mathcal{V}h\mbk{F(x)}\). 
Consequently, \(N_{ F\inv\mbk{\mathcal{U}h(0)}}(x)=F'(x)^\top\mathcal{V}h\mbk{F(x)}\). The last space equals \(\mathcal{V}f(x)\) by \cite[Theorem~1(i)]{Liu2026vu} once \(x\in\mathcal{R}_{h,F}\). If \(h\) is finite-valued this is automatic; otherwise it is the additional restriction stated in (ii).

\indent (iii) On that same piece of the leaf, \eqref{eq:pfyinu} holds. 
Hence the continuity of \(\partial f\) follows immediately from the continuity of \(F'(x)\). 
The restriction \(f\vert_{ F\inv\mbk{\mathcal{U}h(0)}}\) is smooth because of \eqref{eq:fsmoF}.
The chain rule on \(\mathcal{R}_{h,F}\) gives regularity of \(f\) along that piece. 
In view of \ref{it2:2} and the continuity of \(\partial f\), we see that \(f\) is partly smooth at  all \(x\in  F\inv\mbk{\mathcal{U}h(0)}\) sufficiently close to \(\bar{x}\) relative to \( F\inv\mbk{\mathcal{U}h(0)}\).
\end{proof}
The next example shows that the leaf of the unbiased layer is the manifold of Proposition~\ref{prop:PScomp}.
\begin{example}[Example~\ref{eg:layer} continued]\label{eg:layer-leaf}
For the unbiased layer of Example~\ref{eg:layer}, the leaf is \(\mathcal{M}=\ker A\).
Full row rank is the transversality \eqref{eq:transver}, so Proposition~\ref{prop:PScomp} gives that \(f=h\circ A\) is partly smooth at the origin relative to \(\ker A\).
On that leaf the sign pattern is fixed, as already noted in Example~\ref{eg:layer}.
\end{example}
The nondegeneracy condition for a composite \(g\circ F\) with \(F(\bar x)=0\) is the same transversality as \eqref{eq:transver}: the tangent space to \(\mathcal{U}g(0)\) at the origin is the normal cone to \(\partial g(0)\) at a relative-interior point.

\subsection{Polyhedral partial smoothness}
In Propositions \ref{prop: polyheH} and \ref{prop:polyheiHp} we have seen the special properties of polyhedral function in \(\mathcal{H}\). 
Moreover, the ``active" index sets in the proof of Proposition  \ref{prop:polyheiHp} define a smooth manifold
\[\mathcal{M}_{\bar{x}}\mathrel{\mathop:}=\mcb{x\in\mathbb{R}^n\colon I(x)=I\mbk{\bar{x}}\ \mathrm{ and }\ J(x)=J\mbk{\bar{x}}}\]
relative to which \(h\) is partly smooth (see \cite[Example 3.4]{Lewis2002active}).
Although the active manifold in the definition of partial smoothness is unique \cite[Corollary 4.2]{Hare2004}, the following result shows that \(\mathcal{M}_{\bar{x}}\) coincides locally with the affine set \(\bar{x}+\mathcal{U}h(\bar x)\). 
\begin{proposition}[Proposition 5.2 in \cite{Liu2019Subdifferential}]\label{prop:UeqM}
For polyhedral functions, the associated manifold of partial smoothness satisfies \(\mathcal{M}_{\bar{x}}\subset\bar{x}+\mathcal{U}h(\bar x)\). Moreover, there exists \(\epsilon>0\) such that \(\mathcal{M}_{\bar{x}}\cap B\mbk{\bar{x},\epsilon}=\bar{x}+\mbk{\mathcal{U}h(\bar x)\cap B\mbk{0,\epsilon}}\).
\end{proposition}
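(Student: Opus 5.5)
We work throughout with the representation \eqref{eq:reprPolyh} and the active sets \(I(\cdot)\), \(J(\cdot)\). The proof splits into the inclusion \(\mathcal{M}_{\bar x}\subset\bar x+\mathcal{U}h(\bar x)\) and the local equality.

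\textbf{First inclusion.} Take \(x\in\mathcal{M}_{\bar x}\). Then \(I(x)=I(\bar x)\) and \(J(x)=J(\bar x)\), so \(x\in\dom h\), and by \eqref{eq:polyex} we get \(\partial h(x)=\partial h(\bar x)\neq\emptyset\). We show \(\langle s-g,x-\bar x\rangle=0\) for all \(s,g\in\partial h(\bar x)\); since \(\mathcal{U}h(\bar x)=\mathcal{V}h(\bar x)^\perp\) with \(\mathcal{V}h(\bar x)=\operatorname{span}(\partial h(\bar x)-g)\), this gives \(x-\bar x\in\mathcal{U}h(\bar x)\). By Proposition~\ref{prop:sub}\ref{it:1-3}, every \(s\in\partial h(\bar x)=\partial h(x)\) satisfies \(\langle s,x\rangle=h(x)\) and \(\langle s,\bar x\rangle=h(\bar x)\). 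Hence \(\langle s,x-\bar x\rangle=h(x)-h(\bar x)\) does not depend on \(s\), which is what was needed. No local restriction is used here.

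\textbf{Local equality.} By continuity of the finitely many linear forms, choose \(\epsilon>0\) so that every \(y\in B(\bar x,\epsilon)\) satisfies \(\langle a^i,y\rangle<\max_{k\in I(\bar x)}\langle a^k,y\rangle\) for \(i\notin I(\bar x)\), and \(\langle c^j,y\rangle<0\) for \(j\notin J(\bar x)\). By the first inclusion, it remains to show that \(y=\bar x+u\) with \(u\in\mathcal{U}h(\bar x)\cap B(0,\epsilon)\) has \(I(y)=I(\bar x)\) and \(J(y)=J(\bar x)\).

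By \eqref{eq:vx}, \(u\in\mathcal{U}h(\bar x)\) means \(\langle a^i-a^k,u\rangle=0\) for \(i,k\in I(\bar x)\) and \(\langle c^j,u\rangle=0\) for \(j\in J(\bar x)\). Then:
\begin{enumerate}[label=\textnormal{(\alph*)}]
\item for \(j\in J(\bar x)\), \(\langle c^j,y\rangle=0\);
\item for \(j\notin J(\bar x)\), \(\langle c^j,y\rangle<0\) by the choice of \(\epsilon\).
\end{enumerate}
So \(y\in\dom h\) and \(J(y)=J(\bar x)\). Next, all \(\langle a^i,y\rangle\) with \(i\in I(\bar x)\) take a common value \(\tau\), while the indices \(i\notin I(\bar x)\) give strictly smaller values by the choice of \(\epsilon\). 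Hence \(h(y)=\tau\) and \(I(y)=I(\bar x)\), so \(y\in\mathcal{M}_{\bar x}\).

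\textbf{Main obstacle.} The delicate points are degenerate cases. When \(I(\bar x)=\emptyset\), the empty-max convention makes \(h=\delta_C\), and the \(\mathcal{V}\) formula then involves only \(c^j\) with free coefficients. When \(\partial h(\bar x)\) contains the cone part, \(\mathcal{V}h(\bar x)\) must be seen to include \(\operatorname{span}\{c^j: j\in J(\bar x)\}\). Both are handled by \eqref{eq:vx}, which Proposition~\ref{prop:polyVU} already supplies, so the remaining work is only the choice of \(\epsilon\).
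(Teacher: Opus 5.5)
The paper does not prove this statement; it imports it from \cite{Liu2019Subdifferential}, so there is no in-paper argument to compare with. Your local-equality half is sound. It is the same finitely-many-linear-forms continuity argument the paper uses in Proposition~\ref{prop:polyheiHp} and Theorem~\ref{thm:poly-enlarg}(ii), combined with the description \eqref{eq:vx} of \(\mathcal{V}h(\bar x)\). For the direction \(\subset\) it falls back on the global inclusion, which is legitimate.

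One step of the global inclusion fails as written. From \(I(x)=I(\bar x)\) and \(J(x)=J(\bar x)\) you conclude \(x\in\dom h\). This is right when \(I\neq\emptyset\): then \(I(\bar x)\neq\emptyset\), and an active index forces \(h(x)<\infty\). When \(I=\emptyset\), the case you flag yourself, one has \(I(\cdot)\equiv\emptyset\), and \(\mathcal{M}_{\bar x}=\{x: J(x)=J(\bar x)\}\) can leave \(\dom h\). Take \(h=\delta_{\mathbb{R}_-}\) with the single generator \(c^1=1\) and \(\bar x=-1\). The point \(x=1\) lies in \(\mathcal{M}_{\bar x}\), yet \(\partial h(1)=\emptyset\neq\{0\}=\partial h(-1)\). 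So neither \(\partial h(x)=\partial h(\bar x)\) nor Proposition~\ref{prop:sub}\ref{it:1-3} at \(x\) is available there. The inclusion is still true, but \eqref{eq:vx} does not rescue the step as you suggest: it settles the formula for \(\mathcal{V}\), not membership in the domain. Near \(\bar x\) the issue disappears, since your choice of \(\epsilon\) keeps points of \(\mathcal{M}_{\bar x}\) inside \(C\), so only the global statement is affected.

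The repair is to bypass \(\partial h(x)\) entirely:
\begin{itemize}
\item for \(i,k\in I(\bar x)=I(x)\), both \(\langle a^i-a^k,x\rangle\) and \(\langle a^i-a^k,\bar x\rangle\) vanish;
\item for \(j\in J(\bar x)=J(x)\), both \(\langle c^j,x\rangle\) and \(\langle c^j,\bar x\rangle\) vanish.
\end{itemize}
By \eqref{eq:vx} these vectors span \(\mathcal{V}h(\bar x)\), so \(x-\bar x\in\mathcal{U}h(\bar x)\). This subtraction of active equalities at \(x\) and \(\bar x\) also works, with the constants cancelling, for a general polyhedral function \(\max_i(\langle a^i,x\rangle+b_i)\) on \(\{\langle c^j,x\rangle\le d_j\}\). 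That is the setting of the cited result, whereas your appeal to \(\langle s,x\rangle=h(x)\) is special to \(\mathcal{H}\).
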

Here we have the following special case of Proposition \ref{prop:UeqM}. 
\begin{proposition}\label{prop:polyHUeqM}
For polyhedral \(h\in\mathcal{H}\) and \(\bar x\in\dom h\), Proposition~\ref{prop:UeqM} gives
\[
\mathcal{M}_{\bar x}\cap B(\bar x,\epsilon)
=
\mathcal{U}h(\bar x)\cap B(\bar x,\epsilon)
\]
for some \(\epsilon>0\). The ball is centred at \(\bar x\), not at the origin.
\end{proposition}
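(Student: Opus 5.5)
The plan is to reduce the statement to Proposition~\ref{prop:UeqM} and to observe that, for a sublinear polyhedral function, the affine set \(\bar x+\mathcal{U}h(\bar x)\) is the subspace \(\mathcal{U}h(\bar x)\) itself. The only input beyond Proposition~\ref{prop:UeqM} is Proposition~\ref{prop:xinU}, which says that the base point lies in its own \(\mathcal{U}\)-space.

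\textbf{Step 1 (the subspace is defined at \(\bar x\)).} Since \(h\) is polyhedral, Proposition~\ref{prop:domsub}(iv) gives \(\dom\partial h=\dom h\). Hence \(\bar x\in\dom\partial h\), so \(\partial h(\bar x)\neq\emptyset\) and \(\mathcal{U}h(\bar x)\) is a well-defined linear subspace.

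\textbf{Step 2 (the base point lies in the subspace).} Proposition~\ref{prop:xinU} gives \(\bar x\in\mathcal{U}h(\bar x)\). A linear subspace is invariant under translation by any of its own elements, so
\[
\bar x+\mathcal{U}h(\bar x)=\mathcal{U}h(\bar x).
\]

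\textbf{Step 3 (rewrite the ball).} Proposition~\ref{prop:UeqM} supplies \(\epsilon>0\) with \(\mathcal{M}_{\bar x}\cap B(\bar x,\epsilon)=\bar x+\bigl(\mathcal{U}h(\bar x)\cap B(0,\epsilon)\bigr)\). A point \(\bar x+w\) lies in \(B(\bar x,\epsilon)\) if and only if \(w\in B(0,\epsilon)\), which gives
\[
\bar x+\bigl(\mathcal{U}h(\bar x)\cap B(0,\epsilon)\bigr)=\bigl(\bar x+\mathcal{U}h(\bar x)\bigr)\cap B(\bar x,\epsilon).
\]
By Step~2 the right-hand side equals \(\mathcal{U}h(\bar x)\cap B(\bar x,\epsilon)\), which is the claim. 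This also explains why the ball is centred at \(\bar x\): translating the ball \(B(0,\epsilon)\) by \(\bar x\) produces \(B(\bar x,\epsilon)\).

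There is no real obstacle. The only point needing care is Step~1, because Proposition~\ref{prop:xinU} requires \(\bar x\in\dom\partial h\), and polyhedrality is what guarantees this at every point of \(\dom h\). Without it, Example~\ref{eg:emptysub} shows that \(\dom\partial h\) can be strictly smaller than \(\dom h\).
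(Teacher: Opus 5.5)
Your proof is correct and follows the paper's argument: Proposition~\ref{prop:xinU} gives \(\bar x\in\mathcal{U}h(\bar x)\), hence \(\bar x+\mathcal{U}h(\bar x)=\mathcal{U}h(\bar x)\), and Proposition~\ref{prop:UeqM} then yields the local identity. You also make explicit two details the paper leaves implicit: the check \(\bar x\in\dom\partial h\) via Proposition~\ref{prop:domsub}(iv), which Proposition~\ref{prop:xinU} needs, and the rewriting \(\bar x+\bigl(\mathcal{U}h(\bar x)\cap B(0,\epsilon)\bigr)=\bigl(\bar x+\mathcal{U}h(\bar x)\bigr)\cap B(\bar x,\epsilon)\).
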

\begin{proof}
Proposition~\ref{prop:xinU} gives \(\bar x\in\mathcal{U}h(\bar x)\), so \(\bar x+\mathcal{U}h(\bar x)=\mathcal{U}h(\bar x)\). Proposition~\ref{prop:UeqM} then gives the displayed local identity. The ball is centred at \(\bar x\).
\end{proof}
The next example identifies the manifold for the rectifier.
\begin{example}[Example~\ref{eg:relu} continued]\label{eg:relu-mfld}
Let \(h(t)=\max\{t,0\}\) and \(\bar t=1\). Then \(I(1)=\{1\}\) and \(\mathcal{U}h(1)=\mathbb{R}\).
The active manifold is \(\mathcal{M}_1=\{t:I(t)=\{1\}\}=\{t>0\}\).
For \(\epsilon<1\), \(\mathcal{M}_1\cap B(1,\epsilon)=(1-\epsilon,1+\epsilon)=\mathcal{U}h(1)\cap B(1,\epsilon)\), which is the local identity of Proposition~\ref{prop:polyHUeqM}.
At the origin both branches achieve the value, so \(I(0)=\{1\}\) and \(\mathcal{U}h(0)=\{0\}\); the manifold is a point and the identity is trivial.
\end{example}

Along the leaf the activity freezes for small \(\varepsilon\).

\begin{proposition}[polyhedral outer function, along the leaf]\label{prop:Meps-poly}
Let \(f=h\circ F\) with \(F:\mathbb{R}^n\to\mathbb{R}^m\) of class \(C^1\) and \(h\in\mathcal{H}\) polyhedral, and let \(\bar\partial_\varepsilon h\) be the enlargement \eqref{eq:delpoly}.
Assume \(\bar x\in\mathcal{R}_{h,F}\) and \eqref{eq:Meps-trans}.
Write \(\mathcal{M}\) and \(\mathcal{M}_\varepsilon(x)\) as in Theorem~\ref{thm:Meps}.
Then there exist \(r>0\) and \(\varepsilon_0>0\) such that, for \(x\in\mathcal{M}\cap\overline{B}(\bar x,r)\) and \(0\le\varepsilon\le\varepsilon_0\),
\begin{enumerate}[label=\textnormal{(\roman*)}]
\item\label{it:Meps-freeze} \(I_\varepsilon\bigl(F(x)\bigr)=I\bigl(F(\bar x)\bigr)\);
\item\label{it:Meps-U} \(\mathcal{U}_\varepsilon h(F(x))=\mathcal{U}h(F(\bar x))\);
\item\label{it:Meps-leaf} \(\mathcal{M}_\varepsilon(x)\cap\overline{B}(\bar x,r)=\mathcal{M}\cap\overline{B}(\bar x,r)\);
\item\label{it:Meps-tan} \(T_{\mathcal{M}}(x)=\mathcal{U}_\varepsilon f(x)=\mathcal{U}f(x)\).
\end{enumerate}
In particular the dimension identity assumed in Theorem~\ref{thm:Meps}\ref{it:Meps-dim} holds along \(\mathcal{M}\times[0,\varepsilon_0]\), not necessarily on a full neighbourhood of \((\bar x,0)\).
\end{proposition}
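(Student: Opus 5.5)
The plan is to reduce everything to the outer point \(\bar y:=F(\bar x)\) and the local identity of Proposition~\ref{prop:polyHUeqM}. Write \(U:=\mathcal{U}h(\bar y)\) and \(V:=U^\perp\). By Proposition~\ref{prop:xinU}, \(\bar y\in U\), so \(\bar y+U=U\) and \(\mathcal{M}=F^{-1}(U)=F^{-1}(\bar y+U)\).

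\textbf{Step 1 (activity freezes).} Proposition~\ref{prop:polyHUeqM}, with \(\bar y\) in place of \(\bar x\), gives \(\epsilon_1>0\) such that every \(y\in U\cap B(\bar y,\epsilon_1)\) lies in \(\mathcal{M}_{\bar y}\). This means \(I(y)=I(\bar y)\), \(J(y)=J(\bar y)\) and \(y\in\dom h\). By continuity of \(F\), choose \(r\) so that \(F(\overline B(\bar x,r))\subset B(\bar y,\epsilon_1/2)\). Then for \(x\in\mathcal{M}\cap\overline B(\bar x,r)\) the point \(y=F(x)\) has the frozen activity.

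Next, for \(i\notin I(\bar y)\) the slack \(h(y)-\langle a^i,y\rangle\) is continuous on \(U\cap\overline B(\bar y,\epsilon_1/2)\), because there \(h(y)=\langle a^{i_0},y\rangle\) for a fixed \(i_0\in I(\bar y)\). It is positive at \(\bar y\). After shrinking \(\epsilon_1\) and \(r\), it is bounded below by some \(2\varepsilon_0>0\). This is the same argument as Theorem~\ref{thm:poly-enlarg}(ii). Hence \(I(\bar y)=I(y)\subset I_\varepsilon(y)\subset I(\bar y)\) for \(0\le\varepsilon\le\varepsilon_0\), which is \ref{it:Meps-freeze}. (If \(I=\emptyset\), everything is trivial.)

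\textbf{Step 2 (\ref{it:Meps-U}, \ref{it:Meps-leaf}).} With \(I_\varepsilon(y)=I(\bar y)\) and \(J(y)=J(\bar y)\), definitions \eqref{eq:delpoly} and \eqref{eq:polyex} give \(\bar\partial_\varepsilon h(y)=\partial h(\bar y)=\partial h(y)\). Then \eqref{eq:polyveps} and \eqref{eq:vx} give \(\mathcal{V}_\varepsilon h(y)=V\), hence \(\mathcal{U}_\varepsilon h(y)=U\). Since \(F(x)\in U\), we get \(F(x)+\mathcal{U}_\varepsilon h(F(x))=U\). Therefore \(\mathcal{M}_\varepsilon(x)=F^{-1}(U)=\mathcal{M}\) exactly, and intersecting with \(\overline B(\bar x,r)\) gives \ref{it:Meps-leaf}. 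Transversality is not needed up to this point.

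\textbf{Step 3 (tangent identity, the main obstacle).} By \eqref{eq:Meps-trans}, \(P_VF'(\bar x)\) is surjective, and surjectivity persists on \(\overline B(\bar x,r)\) after shrinking \(r\). So \(\mathcal{M}=\{P_VF=0\}\) is a \(C^1\) manifold near \(\bar x\), with \(T_{\mathcal{M}}(x)=F'(x)^{-1}(U)\).

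To identify this with \(\mathcal{U}_\varepsilon f(x)=F'(x)^{-1}\mathcal{U}_\varepsilon h(F(x))\) via Proposition~\ref{prop:epsVUchain}, and with \(\mathcal{U}f(x)\) via the exact chain rule, I need \(x\in\mathcal{R}_{h,F}\) for all \(x\in\mathcal{M}\) near \(\bar x\), not only at \(\bar x\). This is the delicate point, and I would handle it by cases on how \(\bar x\in\mathcal{R}_{h,F}\) holds.

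Under \ref{R1}, \(F(\bar x)\in\ri(\dom h)\), which is relatively open in \(\aff(\dom h)\). The affine-image condition holds on a neighbourhood of \(\bar x\), so \ref{R1} persists.

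Under \ref{R2}, I use the polyhedral structure. For \(y=F(x)\) on the frozen piece, \(\partial^\infty h(y)=\operatorname{pos}\{c^j:j\in J(y)\}=\operatorname{pos}\{c^j:j\in J(\bar y)\}=:K\) is one fixed closed cone. The condition \(\ker F'(x)^\top\cap K=\{0\}\) is open in \(x\): a normalized violating sequence would converge, by compactness of \(K\cap\{\|v\|=1\}\), to a violation at \(\bar x\). If \(F(x)\) has moved into \(\ri(\dom h)\), then \ref{R1} or the exact chain rule applies there instead, since \(J(y)=J(\bar y)\) fixes the face of \(\dom h\) containing \(y\).

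With \(\mathcal{R}_{h,F}\) secured on the piece, \ref{it:Meps-U} gives \(\mathcal{U}_\varepsilon f(x)=F'(x)^{-1}U\). The exact chain rule with \(\mathcal{U}h(F(x))=U\) (same activity) gives \(\mathcal{U}f(x)=F'(x)^{-1}U\), which proves \ref{it:Meps-tan}. The closing remark follows because \ref{it:Meps-U} gives \(\dim\mathcal{U}_\varepsilon h(F(x))=\dim U\) only for \(x\in\mathcal{M}\), not for nearby points off the leaf.
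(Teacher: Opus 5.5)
Your proposal is correct. For (i)--(ii) it is the paper's argument: freeze $I$ and $J$ on $F(\mathcal{M})\subset\mathcal{U}h(F(\bar x))$ near $F(\bar x)$ via Proposition~\ref{prop:polyHUeqM}, bound the inactive slacks below on a compact piece, and read off $\bar\partial_\varepsilon h(F(x))=\partial h(F(\bar x))$.

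You depart from the paper in two places, both to your advantage.
\begin{itemize}
\item For (iii) the paper appeals to the implicit-function theorem. You instead note that $F(x)\in U$ gives $F(x)+U=U$, hence $\mathcal{M}_\varepsilon(x)=\mathcal{M}$ exactly. This uses neither transversality nor a chart.
\item For (iv) the paper applies \eqref{eq:uvefcom} and the exact chain rule at points $x\in\mathcal{M}$ near $\bar x$, while assuming only $\bar x\in\mathcal{R}_{h,F}$. Your persistence argument supplies what the paper leaves tacit. Under (R1) you use relative openness of $\ri(\dom h)$ in $\aff(\dom h)$. Under (R2) you use the frozen cone $\partial^\infty h(F(x))=N_{\dom h}(F(x))=\pos\{c^j:j\in J(F(\bar x))\}$ and the compactness argument showing that \eqref{eq:icqo} is an open condition in $x$.
\end{itemize}

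Two tidy-ups are worth making.
\begin{itemize}
\item The equality $\mathcal{U}_\varepsilon f(x)=F'(x)^{-1}\mathcal{U}_\varepsilon h(F(x))$ is pure linear algebra for the transported set $F'(x)^\top\bar\partial_\varepsilon h(F(x))$. So only the identity for $\mathcal{U}f(x)$ actually needs the chain rule at $x$.
\item Your clause about $F(x)$ moving into $\ri(\dom h)$ is vacuous. The equality $J(F(x))=J(F(\bar x))$ puts $F(x)$ in the relative interior of the same proper face of $\dom h$ as $F(\bar x)$, so (R2) itself persists. In any case, \eqref{eq:icqo} at $x$ would already give (CR) by Rockafellar--Wets, Theorem~10.6.
\end{itemize}
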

\begin{proof}
Write \(z:=F(\bar x)\) and \(U:=\mathcal{U}h(z)\).

\ref{it:Meps-freeze}. By Proposition~\ref{prop:polyHUeqM} the partial-smoothness manifold of \(h\) at \(z\) coincides locally with \(z+U\). Proposition~\ref{prop:xinU} gives \(z\in\mathcal{U}h(z)\), so \(z+U=\mathcal{U}h(z)\), the ball being centred at \(z\).
Thus if \(x\in\mathcal{M}\) is close to \(\bar x\), then \(F(x)\in z+U\) and \(I(F(x))=I(z)\).
The same identification gives \(J(F(x))=J(z)\): every generator \(c^j\) with \(j\in J(z)\) lies in \(\mathcal{V}h(z)\), hence is orthogonal to \(U\), so the equalities \(\mang{c^j,z}=0\) persist along \(z+U\), while inactive constraints remain inactive for small \(u\in U\).
The inactive inequalities of the representation \eqref{eq:reprPolyh} have a strictly positive slack at \(z\).
Shrink \(r>0\) first so that \(F\bigl(\mathcal{M}\cap\overline{B}(\bar x,r)\bigr)\) lies in a prescribed neighbourhood of \(z\) (possible because \(F\) is continuous at \(\bar x\in\mathcal{M}\)). On that compact piece the slack is then bounded below by some \(\gamma>0\).
Hence \(I_\varepsilon(F(x))\subset I(z)\) whenever \(\varepsilon<\gamma\).
The reverse inclusion \(I(z)=I(F(x))\subset I_\varepsilon(F(x))\) is immediate, so \(I_\varepsilon(F(x))=I(z)\).

\ref{it:Meps-U}. The formulae \eqref{eq:delpoly} and \eqref{eq:polyex} then give \(\bar\partial_\varepsilon h(F(x))=\partial h(z)\), therefore \(\mathcal{U}_\varepsilon h(F(x))=U=\mathcal{U}h(z)\).

\ref{it:Meps-leaf}. Since \(F(x)\in z+U\) one has \(F(x)+U=z+U\), and the implicit-function theorem gives the local identity \(\mathcal{M}_\varepsilon(x)\cap\overline{B}(\bar x,r)=\mathcal{M}\cap\overline{B}(\bar x,r)\).

\ref{it:Meps-tan}. The tangent identity is \eqref{eq:uvefcom} with \(\mathcal{U}_\varepsilon h(F(x))=U=\mathcal{U}h(z)\), together with Theorem~\ref{thm:Meps}'s transversality which makes \(\mathcal{M}\) a \(C^1\) manifold of dimension \(n-\dim V\) near \(\bar x\).
\end{proof}
The dimension identity of Theorem~\ref{thm:Meps}\ref{it:Meps-dim} is an extra hypothesis. Proposition~\ref{prop:Meps-poly} shows that it holds along the leaf for polyhedral \(h\) and small \(\varepsilon\); it is not asserted in a full neighbourhood of \((\bar x,0)\).
The unbiased layer and the max-pooling of Example~\ref{eg:layer} are both polyhedral, so the same freeze applies to them.
The next example computes the freeze for the unbiased layer.
\begin{example}[Example~\ref{eg:layer} continued]\label{eg:layer-freeze}
Let \(h\) and \(F\) be the unbiased layer of Example~\ref{eg:layer}, and take \(\bar x=0\).
The leaf is \(\mathcal{M}=\ker A\), as in Example~\ref{eg:layer}. For \(x\in\mathcal{M}\), \(Ax=0\), so \(F(x)=0\). Each coordinate of \(h\) has two generators, the linear branch and the constant zero, and both are active at the origin, so there is no inactive index. Hence \(I_\varepsilon(F(x))=I(0)\) for every \(\varepsilon\ge 0\).
The activity is frozen along the whole leaf, which is the conclusion of Proposition~\ref{prop:Meps-poly}.
\end{example}
\section{Concluding Remarks}
The exact \(\mathcal{VU}\) algebra of \(f=h\circ F\) is taken from~\cite{Liu2026vu}. The contribution is the passage from an enlargement \(\bar\partial_\varepsilon h\subset\partial_\varepsilon h\) to the composite \(\varepsilon\)-spaces and to approximate leaves.
Theorem~\ref{thm:eps-transfer} transfers the one-sided limits through \(F\) under the transversality \eqref{eq:Meps-trans}; membership in \(\mathcal{R}_{h,F}\) alone gives only the exact chain rule.
Theorem~\ref{thm:Meps} gives inner semicontinuity of the approximate leaf under the nesting \(\mathcal{U}h(F(\bar x))\subset\mathcal{U}_\varepsilon h(F(x))\), and Painlev\'e--Kuratowski convergence of the truncated leaves once the dimension is frozen.
Proposition~\ref{prop:Meps-poly} shows that a polyhedral outer function freezes its activity along the exact leaf for small \(\varepsilon\), so the dimension identity holds on that leaf and not necessarily in a full neighbourhood of \((\bar x,0)\).
Section~5 records that \(h\in\mathcal{H}\) is partly smooth relative to the lineality space, extending~\cite{hare2006functions} from the origin; that section is not used by the two theorems above.
Tilt stability, Hausdorff continuity of \(\mathcal{U}_\varepsilon\) off the leaf, and algorithms are outside the scope. Whether a nonpolyhedral \(h\in\mathcal{H}\) admits a dimension freeze on a set thinner than a neighbourhood of \((\bar x,0)\) is left open.

\section*{Declarations}
\textbf{Corresponding author.}
Shuai Liu, \texttt{shuai0liu@gmail.com}. The author list is complete and the author approved the submission.

\textbf{Author contribution.}
The author is solely responsible for the conception, proofs, and writing.

\textbf{Funding.}
No funding was received for this work.

\textbf{Competing interests.}
The author declares no competing interests.

\textbf{Data and code.}
No data or code were generated. The arguments are mathematical.

\textbf{Ethics.}
Not applicable.

\textbf{Use of AI tools.}
An AI assistant was used to draft and revise expository text, including the abstract, the introduction, the concluding remarks, and transitional sentences, and to check the exposition of proofs. The mathematical statements, the choice of hypotheses, and the approval of every change are the author's.

\textbf{Related work.}
The exact \(\mathcal{VU}\) calculus used as a black box is~\cite{Liu2026vu}, available as arXiv:2609.18683. A preprint will be deposited on arXiv; the manuscript is not under review at any other journal.

\bibliographystyle{amsplain}
\bibliography{VUsublinearComposite}
\end{document}